\newcommand{\h}{\operatorname{H}}
\newcommand{\Spec}{\operatorname{Spec}}
\newcommand{\Diff}{\operatorname{Diff}}
\newcommand{\Sing}{\operatorname{Sing}}
\newcommand{\red}{\operatorname{red}}
\newcommand{\wt}{\operatorname{wt}}
\newcommand{\ord}{\operatorname{ord}}
\newcommand{\Sat}{\operatorname{Sat}}
\newcommand{\corank}{\operatorname{corank}}
\newcommand{\len}{\operatorname{len}}
\newcommand{\gr}{\operatorname{gr}}
\newcommand{\pgr}{\operatorname{pgr}}
\newcommand{\unit}{\operatorname{unit}}
\newcommand{\torsion}{\operatorname{torsion}}
\newcommand{\toplus}{\mathbin{\tilde\oplus}}
\newcommand{\totimes}{\mathbin{\tilde\otimes}}
\newcommand{\bars}{\overline{s}}
\newcommand{\sigmaord}{\sigma\mbox{-}\ord}
\newcommand{\CC}{\mathbb{C}}
\newcommand{\ZZ}{\mathbb{Z}}
\newcommand{\PP}{\mathbb{P}}
\newcommand{\QQ}{\mathbb{Q}}
\newcommand{\mm}{{\mathfrak{m}}}
\newcommand{\SSS}{{\mathscr{S}}}
\newcommand{\KKK}{{\mathscr{K}}}
\newcommand{\LLL}{{\mathscr{L}}}
\newcommand{\MMM}{{\mathscr{M}}}
\newcommand{\OOO}{\mathscr{O}}
\newcommand{\DDD}{\mathscr{D}}
\newcommand{\NNN}{\mathscr{N}}
\newcommand{\muu}{{\boldsymbol{\mu}}}
\newcommand{\type}[1]{$\mathrm{#1}$}
\renewcommand\labelenumi{{\rm (\roman{enumi})}}
\renewcommand\theenumi{(\roman{enumi})}
\newcounter{nn}
\renewcommand\thenn{{\rm(\arabic{section}.\arabic{subsection}.\arabic{nn})}}
\newcommand{\nnn}{\refstepcounter{nn}\thenn}
\newtheorem{thm}[subsection]{}
\newtheorem{sthm}[equation]{}
\newtheorem*{lemma*}{Lemma}
\newtheorem*{claim*}{Claim}
\newtheorem*{corollary*}{Corollary}
\theoremstyle{definition}
\newtheorem{de}[subsection]{}
\newtheorem{sde}[equation]{}
\newtheorem*{remark*}{Remark}
\newenvironment{lemma}{\begin{thm}{\bf Lemma.}}{\end{thm}}
\newenvironment{slemma}{\begin{sthm}{\bf Lemma.}}{\end{sthm}}
\newenvironment{remark}{\begin{thm}{\bf Remark.}}{\end{thm}}
\newenvironment{sremark}{\begin{sde}{\bf Remark.}}{\end{sde}}
\newenvironment{sclaim}{\begin{sthm}{\bf Claim.}}{\end{sthm}}
\newenvironment{computation}{\begin{thm}{\bf Computation.}}{\end{thm}}
\newenvironment{sconstruction}{\begin{sde}{\bf Construction.}}{\end{sde}}
\newenvironment{assumption}{\begin{de}{\bf Assumptions.}}{\end{de}}
\newcommand{\flabel}[2]{\text{(\ref{#1}#2)}}
\newcommand{\fref}[2]{{\rm (\ref{#1}#2)}}
\newcommand{\xref}[1]{{\rm\ref{#1}}}
\title{Threefold extremal contractions \\ of type \type{(IIA)}, I}
\author{Shigefumi Mori}
\address{S.~Mori: RIMS,
Kyoto University, Oiwake-cho, Kitashirakawa, Sakyo-ku, Kyoto
606-8502, Japan}
\email{mori@kurims.kyoto-u.ac.jp}
\author{Yuri Prokhorov}
\thanks{The first author's work partially supported by JSPS KAKENHI Grant
Numbers (A) 25287005 and (S) 24224001.
\newline\indent
The second author's work partially supported by the RFFI grants
15-01-02164a,
15-01-02158a,
15-51-50045{\font\tencyr=wncyr10\tencyr\cyracc YAF\_a},
and
by a subsidy granted to the HSE by the Government 
of the Russian Federation for the implementation of 
the Global Competitiveness Program.
}
\address{Y.~Prokhorov:
Steklov Mathematical Institute,
8 Gubkina street, Moscow 119991, Russia, and
\newline\indent
Department of Algebra, 
Moscow State University, and
\newline\indent
National Research University Higher School of Economics
}
\email{prokhoro@mi.ras.ru}
\begin{document}

\begin{abstract}
Let $(X, C)$ be a germ of a threefold $X$ with terminal singularities
along an irreducible reduced complete curve $C$
with a contraction $f: (X, C)\to (Z, o)$
such that $C=f^{-1}(o)_{\red}$ and $-K_X$
is ample. Assume that $(X, C)$ contains a point
of type \type{(IIA)} and that a general member
$H\in |\OOO_X|$ containing $C$ is normal.
We classify such germs
in terms of $H$.
\end{abstract}
\maketitle

\section{Introduction}
Recall that a \emph{contraction} is a proper surjective morphism $f: X\to Z$ of normal varieties
such that $f_*\OOO_X=\OOO_Z$.
Let $(X,C)$ be the analytic germ of a threefold with terminal singularities
along a reduced complete curve. We say that $(X,C)$
is an \textit{extremal curve germ} if
there is a contraction $f: (X,C)\to (Z,o)$ such that
$C=f^{-1}(o)_{\red}$ and $-K_X$
is $f$-ample.
Furthermore, $f$ is called \textit{flipping} if
its exceptional locus coincides with $C$ and
\textit{divisorial}
if its exceptional locus is two-dimensional.
If $f$ is not birational, then $Z$ is a surface and
$(X,C)$ is said to be
a \textit{$\QQ$-conic bundle germ} \cite{Mori-Prokhorov-2008}.
Extremal curve germs appear naturally in the three-dimensional minimal model program
\cite{Mori-1988}.

In this paper we consider only extremal curve germs with \textit{irreducible}
central fiber $C$.
All the possibilities for the local behavior of $C$
near singular points of $X$ are classified into types
\type{(IA)}, \type{(IC)}, \type{(IIA)}, \type{(IIB)},
\type{(IA^\vee)}, \type{(II^\vee)}, \type{(ID^\vee)}, \type{(IE^\vee)},
and \type{(III)}, for
whose definitions we
refer the reader to \cite{Mori-1988} and \cite{Mori-Prokhorov-2008}.

In this paper we study extremal curve germs
containing points of type \type{(IIA)}.
As in \cite{Kollar-Mori-1992}, \cite{Mori-Prokhorov-2010}, and
\cite{Mori-Prokhorov-2011}
the classification is done in terms of
a general divisor $H$ of $|\OOO_X|_C$,
the linear subsystem of $|\OOO_X|$ consisting of members containing $C$.
Flipping extremal curve germs of type \type{(IIA)}
were classified in \cite[ch. 7]{Kollar-Mori-1992}.
Our main result is the following.
\begin{thm}{\bf Theorem.}\label{main}
Let $(X,C)$ be an extremal curve germ
and let $f: (X, C)\to (Z,o)$ be the corresponding
contraction.
Assume that $(X,C)$ is not flipping and it has a point $P$ of type \type{(IIA)}.
Furthermore, assume that the general member $H\in |\OOO_X|_C$ is normal.
Then $H$ has only rational singularities. Moreover, the following are the only possibilities
for the dual graph of $(H,C)$, and all the possibilities do occur.

\begin{enumerate}
\renewcommand\labelenumi{{\rm (\arabic{section}.\arabic{subsection}.\arabic{enumi})}\refstepcounter{equation}}
\renewcommand\theenumi{(\arabic{section}.\arabic{subsection}.\arabic{enumi})}

\item
\label{main-theorem-1.1.1}
$f$ is divisorial, $f(H)\ni o$ is of type \type{A_1},
\begin{equation*}
\xy
\xymatrix@R=3pt@C=17pt{
& &&&\circ\ar@{-}[d]
\\
\circ\ar@{-}[r]&\circ\ar@{-}[r]&\bullet\ar@{-}[r]&\underset4\circ\ar@{-}[r]
&\circ\ar@{-}[r]&\underset4\circ
}
\endxy
\end{equation*}

\item
\label{main-theorem-1.1.2}
$f$ is divisorial, $f(H)\ni o$ is of type \type{A_1}, 
\begin{equation*}
\xy
\xymatrix@R=3pt@C=17pt{
&&&\overset3{\circ}\ar@{-}[d]
\\
\circ\ar@{-}[r]&\bullet\ar@{-}[r]&\underset3\circ\ar@{-}[r]
&\circ\ar@{-}[r]&{\circ}\ar@{-}[r]&\underset4\circ
}
\endxy
\end{equation*}

\item
\label{main-theorem-1.1.3}
$f$ is divisorial, $f(H)\ni o$ is of type \type{D_{5}},
\begin{equation*}
\xy
\xymatrix@R=7pt@C=17pt{
&&\circ\ar@{-}[d]&\circ\ar@{-}[d]
\\
\bullet\ar@{-}[r]&\circ\ar@{-}[r]&\circ\ar@{-}[r]
&\underset4\circ\ar@{-}[r]&\circ\ar@{-}[r]&\circ
\\
&&&\circ\ar@{-}[u]
}
\endxy
\end{equation*}

\item
\label{main-theorem-conic-bundle-1.1.4}
$f$ is a $\QQ$-conic bundle, 
\begin{equation*}
\xy
\xymatrix@R=3pt@C=17pt{
\bullet\ar@{-}[r]&\circ\ar@{-}[d]&&\circ\ar@{-}[d]&\circ\ar@{-}[d]
\\
\circ\ar@{-}[r]&\circ\ar@{-}[r]
&\underset3\circ\ar@{-}[r]&\circ\ar@{-}[r]&\underset3\circ\ar@{-}[r]&\circ
}
\endxy
\end{equation*}
\end{enumerate}
In cases \xref{main-theorem-1.1.3} and \xref{main-theorem-conic-bundle-1.1.4}
$P$ is the only singular point of $X$.
In other cases $X$ can have at most one extra type \type{(III)} point.
\end{thm}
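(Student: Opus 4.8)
The plan is to translate the three-dimensional data into the geometry of the normal surface germ $(H,C)$ and its minimal resolution. A member $H\in|\OOO_X|_C$ is the divisor of a function, hence Cartier with $H\sim 0$, so adjunction gives $K_H=(K_X+H)|_H=K_X|_H$; moreover $H=f^{*}(\text{general hyperplane section of }(Z,o))$, which is what makes $f(H)\ni o$ a surface germ in the divisorial case and a conic bundle over a curve germ $\Gamma\ni o$ in the $\QQ$-conic bundle case. The whole problem then becomes: resolve $(H,C)$ minimally, $\tilde H\to H$, and determine the dual graph of the exceptional curves together with the proper transform $\bar C$ of $C$ (the $\bullet$ in the diagrams). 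The numerical input carried over from $X$ is that $X$ is terminal and $-K_X$ is $f$-ample; via $K_H=K_X|_H$ this forces strong constraints on the discrepancies and on $-K_X\cdot C>0$.

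First I would prove that $H$ has only rational singularities and identify the local pieces of the graph coming from $P$. The germ $(X,P)$ of type \type{(IIA)} has an explicit normal form (a $\mu_4$-quotient of a $cAx$-type hypersurface, as recorded in \cite[ch.~7]{Kollar-Mori-1992} and \cite{Mori-Prokhorov-2008}), and I would intersect it with a general function of $|\OOO_X|_C$ to read off the germ $(H,P)$ directly. This computation produces a rational surface germ whose minimal resolution contributes the long central chain containing the $(-4)$-vertices visible in the three divisorial graphs; the index-$4$ behavior of $P$ is exactly what forces these $(-4)$-curves. Rationality is then checked by verifying that the fundamental cycle of the configuration has arithmetic genus zero, the argument running in parallel with the general-elephant analysis of terminal threefold points.

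Next I would pin down the remaining singularities of $X$ along $C$ and compute the self-intersection numbers. By the classification of points lying on an irreducible extremal curve germ (\cite{Mori-1988}, \cite{Mori-Prokhorov-2008}), besides the distinguished point $P$ the curve $C$ can meet at most one further point, necessarily of type \type{(III)}, whose local contribution to the graph is a short Du Val tail; the low-index/symmetric cases (1.1.3) and (1.1.4) force $P$ to be the only singular point by an intersection count on $H$, which is the content of the last two sentences of the theorem. The length of the central chain is bounded using the invariant $-K_X\cdot C$ and the index of $P$, and every self-intersection label $-n$ is then obtained from the discrepancy/adjunction relations on $\tilde H\to H$ together with $K_{\tilde H}\cdot\bar C$.

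Finally I would separate the cases and verify realizability. When $f$ is divisorial, $f(H)\ni o$ is a normal surface germ and $H\to f(H)$ contracts $\bar C$ (and its attached chain); identifying the contracted configuration as an $A_1$ or a $D_5$ singularity distinguishes (1.1.1)--(1.1.2) from (1.1.3). When $f$ is a $\QQ$-conic bundle, $H\to\Gamma$ is a conic bundle over a curve germ, and the symmetric shape of graph (1.1.4) reflects this fibration. I expect the principal obstacle to be the simultaneous completeness-and-existence step: ruling out every other graph requires combining the normality hypothesis on $H$, the rationality just established, and the positivity $-K_X\cdot C>0$ to exclude all longer or differently branched chains, while showing that each of the four graphs actually occurs demands an explicit analytic construction of $(X,C)$---as a quotient or a one-parameter deformation of a standard model---realizing the prescribed $(H,C)$. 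Controlling this back-and-forth between the surface graph and an honest threefold germ is where the real work lies.
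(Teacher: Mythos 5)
Your overall skeleton --- read off the germ $(H,P)$ from the normal form of the \type{(IIA)} point, resolve $(H,C)$, bound the extra singular points, identify $f(H)\ni o$ by contracting the configuration, and prove existence by explicit models and deformations --- does match the architecture of the paper: its Appendix (Computations \ref{computation-lP=3a-new1} and \ref{computation-lP=4}) is exactly the weighted-blowup resolution of $\{\alpha=\beta=0\}\subset\CC^4_{y_1,\dots,y_4}/\muu_4(1,1,3,2)$ that you envisage, and existence is settled by deformation arguments (Proposition \ref{proposition-existence}) and explicit examples. The genuine gap is the step you dispose of in one sentence: ``I would intersect it with a general function of $|\OOO_X|_C$ to read off the germ $(H,P)$ directly.'' The germ $(H,P)$ is \emph{not} determined by the local normal form of $(X,P)$; it depends on which functions vanishing along $C$ exist globally on the germ $(X,C)$, i.e.\ on the image of $\h^0(I_C)$ in $\OOO_{X,P}$ --- a flipping germ, for instance, has the same local structure at $P$ but a different $|\OOO_X|_C$. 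A priori the local equation $\beta$ of a general member can have many different leading behaviours at $P$, each producing a different dual graph, and eliminating all but the listed ones is precisely the hard content of the paper. That elimination requires machinery absent from your proposal: the invariant $\ell(P)$ of \ref{equation-iP}, which organizes the entire proof into the cases $\ell(P)=1,3,4,5$; the $\ell$-splittings of $\gr^1_C\OOO$ and of $\gr^2(\OOO,J)$ for laminal ideals $J$; the vanishing $\h^1(\gr^1_C\OOO)=0$ coming from $R^1f_*\OOO_X=0$; the section-extension Lemmas \ref{lemma-extend-sections-D}, \ref{lemma-extend-sections-E} and Corollary \ref{y12y4-in-beta}, which use the general elephant $D\in|-K_X|$; and the Euler-characteristic Lemma \ref{Lemma-fiber} that kills the spurious subcases. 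Only this analysis tells you, for example, that $y_4^2$ and $y_2y_3$ occur in $\beta$ with general coefficients and whether $y_1y_3$, $y_1^2y_4$ or $y_1^3y_2$ occurs --- the input without which your resolution step cannot even begin.

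A second, smaller gap: you never say how the normality hypothesis on $H$ is actually used. The paper converts it at once into the cohomological hypothesis $\h^0(\gr^1_C\OOO)\neq 0$ of the technical Theorem \ref{main-1} (if this group vanished, every member of $|\OOO_X|_C$ would be singular along $C$, hence non-normal), and it is this hypothesis, not normality per se, that powers all the degree counts. In particular, the assertion that in cases \ref{main-theorem-1.1.3} and \ref{main-theorem-conic-bundle-1.1.4} the point $P$ is the only singularity of $X$ is not obtained by your ``intersection count on $H$'' but by Lemma \ref{lemma-new-ell-p}: these cases have $\ell(P)\ge 3$, so $i_P(1)\ge 2$, and an extra point $R$ with $i_R(1)\ge 1$ would force $\deg\gr^1_C\OOO\le -2$, contradicting $\h^0(\gr^1_C\OOO)\neq 0$. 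Without this translation, the constraints you list ($K_H=K_X|_H$, rationality, $-K_X\cdot C>0$) are too weak to exclude longer or differently branched graphs --- as your closing sentence in effect concedes.
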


The case of non-normal $H$ will be treated in a forthcoming paper.
The main theorem is a consequence of the
technical theorem \ref{main-1} which will be proved in \S\S\ref{section-lP=1-Sing=P}-\ref{section-lP=5}.
The proof splits into cases
according to the invariant $\ell(P)$ (see \ref{equation-iP}):
\begin{center}
\begin{tabular}{c@\quad |@\quad c@\quad c@\quad c@\quad c}
$\ell(P)$&$1$&$3$&$4$&$5$
\\
\hline
\\[-9pt]
Sect. 
&
\S \ref{section-lP=1-Sing=P}-\ref{section-lP=1-Sing-ne=P}&
\S \ref{section-lP=3}
&
\S \ref{section-lP=4}
&
\S \ref{section-lP=5}
\end{tabular}
\end{center}

\section{Notation and the technical theorem}\label{section-Preliminaries}
\begin{de}{\bf Set-up.}
\label{Set-up}
Let $(X,C)$ be an extremal
curve germ and let $f: (X, C)\to (Z,o)$ be the corresponding
contraction.
The ideal sheaf of $C$ in $X$ we denote by $I_C$ or simply by $I$.
Assume that $(X,C)$ has a point $P$ of type \type{(IIA)}.
Then by \cite[6.7, 9.4]{Mori-1988} and
\cite[8.6, 9.1, 10.7]{Mori-Prokhorov-2008} $P$ is the only non-Gorenstein point of $X$
and $(X,C)$ has at most one Gorenstein singular point $R$ \cite[6.2]{Mori-1988},
\cite[9.3]{Mori-Prokhorov-2008}.
Furthermore, assume that the germ $(X,C)$ is not flipping.
\begin{sde}\label{sde}
We have two possibilities:
\begin{itemize}
\item
$f$ is a $\QQ$-conic bundle and $(Z,o)$ is smooth \cite[Th. 1.2]{Mori-Prokhorov-2008};
\item
$f$ is a divisorial contraction and $(Z,o)$ is a cDV point
(or smooth) \cite[Th. 3.1]{Mori-Prokhorov-2010}. Moreover, in this case
$(Z,o)$ is either smooth, \type{cA} or \type{cD} point (because
$|-K_{Z}|$ has a Du Val member of type \type{D}, see \ref{ge} below).
\end{itemize}
\end{sde}
\begin{sde}\label{sde1}
Let $H$ be a general member of $|\OOO_X|_C$ through $C$ and let $H_Z=f(H)$. 
Let $H^{\mathrm n}\to H$ be the normalization (we put $H^{\mathrm n}= H$ if $H$ is normal). Then the composition map 
$H^{\mathrm n}\to H_Z$ has connected fibers. Moreover,
it is a rational curve fibration if $\dim Z=2$
and it is a birational contraction to a point $(H_Z, o)$ 
which is either smooth or Du Val point of type \type{A} or \type{D}
(see \ref{sde}) if $f$ is divisorial. 
In all cases $H^{\mathrm n}$ has only rational singularities.
\end{sde}
\begin{sde}
Throughout this paper $(X^\sharp, P^\sharp)\to (X,P)$
denotes the index-one cover. For any object $V$ on $X$
we denote by $V^\sharp$ the pull-back of $V$ on $X^\sharp$.
\end{sde}
\end{de}

\begin{de}\label{equation-iP}
Denote
\[
\ell(P):=\len_P I^{\sharp (2)}/I^{\sharp 2},
\]
where $I^\sharp$ is the ideal defining $C^\sharp$ in $X^\sharp$
and $\KKK^{(n)}$ is the symbolic $n$-th power of a prime 
ideal $\KKK$.
Recall (see \cite[(2.16)]{Mori-1988}) that in our case
\begin{equation}\label {equation-iP-lP}
i_P(1)=\lfloor(\ell(P)+6)/4\rfloor.
\end{equation}
In our proof of the main result we distinguish cases according
to the value of $\ell(P)$ and treat these cases separately
in the next sections.
\end{de}

\begin{de}
\label{(7.5)}
According to \cite[A.3]{Mori-1988} we can express the \type{(IIA)} point as
\begin{equation}
\label{equation-XC}
\begin{array}{rcl}
(X, P)&=&
\{\alpha=0\}/\muu_4(1, 1, 3, 2)\subset\CC^4_{y_1,\dots, y_4}/\muu_4(1, 1, 3, 2),
\\[4pt]
C&=&\{y_1\text{-axis}\}/\muu_4,
\end{array}
\end{equation}
where $\alpha=\alpha(y_1,\dots, y_4)$ is a semi-invariant such that
\begin{equation}\label{equation-alpha}
\wt\alpha\equiv 2\mod 4,\qquad \alpha\equiv y_1^{\ell(P)}y_j\mod (y_2, y_3, y_4)^2,
\end{equation}
where $j= 2$ (resp. $3$, $4$) if $\ell(P)\equiv 1$ (resp. $3$, $0$) $\mod 4$
\cite[(2.16)]{Mori-1988} and $(I^\sharp)^{(2)}=(y_j)+(I^\sharp)^{2}$.
Moreover, $y_2^2,\, y_3^2\in \alpha$ (because $(X,P)$ is a terminal point of type
\type{cAx/4}).
Note that $\ell(P)\not\equiv 2\mod 4$ because
of the lack of a variable with $\wt\equiv 0\mod 4$.
\end{de}

\begin{de}\label{notation-grn}
Let $(X,C\simeq\PP^1)$ be an extremal curve germ
and let $f: (X, C)\to (Z,o)$ be the corresponding
contraction.
For every $n\ge 1$, we define an $\OOO_C$-module
\[
\gr_C^n\OOO := I^{(n)}/I^{(n+1)}. 
\]
Recall that $R^1f_*\OOO_X=0$ (see \cite[1.2]{Mori-1988}). 
Hence $\h^1(\gr_C^1\OOO)=0$ 
\end{de}

Now we are ready to formulate our main result.

\begin{thm}{\bf Theorem.}\label{main-1}
In the notation of \xref {notation-grn} assume that 
$\h^0(\gr^1_C\OOO)\neq 0$.
Furthermore, assume that $(X,C)$ is not flipping and it has a point $P$ of type \type{(IIA)}.
Then the general member $H\in |\OOO_X|_C$ is normal and
has only rational singularities. Moreover, the following are the only possibilities
for the dual graph $\Delta(H,C)$ of $(H,C)$, and all the possibilities do occur.
\smallskip
{\rm \begin{center}
\begin{tabular}{l|c|c|c|c}

No. & $f$ & $f(H)\ni o$ & $\Delta(H,C)$ & Ref.
\\[7pt]
\hline
&&&
\\[-3pt]
\nnn\label{1-main-theorem-1.1.1} & divisorial&\type{A_1} &\xref{main-theorem-1.1.1}&
\fref{equation-division-cases-(7.8.2.2)}{b}, \fref{cases-gr1COOO-S4}{a}
\\
\nnn\label{1-main-theorem-1.1.2}&
divisorial&\type{A_1}&
\xref{main-theorem-1.1.2}&

\fref{equation-possibilities-(7.8.1.1)-a}{a},
\fref{equation-possibilities-(7.8.1.1)}{b}

\\
\nnn\label{1-main-theorem-1.1.3}&
divisorial&\type{D_{5}}&
\xref{main-theorem-1.1.3}&

\fref{equation-lP=3-gr2O-1}{a, c}, \fref{equation-possibilities-lP=5-gr1COOO}{a}

\\
\nnn\label{1-main-theorem-conic-bundle-1.1.4}&
$\QQ$-conic bundle&&
\xref{main-theorem-conic-bundle-1.1.4}& 
\fref{equation-lP=4-grOJ}{b},
\fref{equation-possibilities-lP=5-gr1COOO}{b}
\end{tabular}
\end{center}}
\smallskip
\noindent
In cases \xref{1-main-theorem-1.1.3} and \xref{1-main-theorem-conic-bundle-1.1.4}
$P$ is the only singular point of $X$.
In other cases $X$ can have at most one extra type \type{(III)} point.
\end{thm}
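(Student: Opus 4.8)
The plan is to prove normality of $H$ first, and then to determine the dual graph $\Delta(H,C)$ by a local-to-global analysis, splitting into the four cases $\ell(P)\in\{1,3,4,5\}$ dictated by \xref{(7.5)} and \eqref{equation-iP-lP}. Since $X$ has terminal, hence Cohen--Macaulay, singularities and $H$ is a Cartier divisor, the surface $H$ is Cohen--Macaulay, so it satisfies $S_2$. By \xref{Set-up} the only singular points of $X$ are $P$ and at most one Gorenstein point $R$, so $X$ is smooth along $C$ outside $\{P,R\}$. The hypothesis $\h^0(\gr^1_C\OOO)\neq 0$, where $\gr^1_C\OOO=I/I^{(2)}$, means that a general $H$ contains $C$ with multiplicity one and is therefore smooth at the generic point of $C$; combined with Bertini away from the base locus and upper semicontinuity, a general $H$ is then smooth outside a finite subset of $\{P,R\}$. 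Thus $H$ is regular in codimension one, and with $S_2$ this gives normality, whence $H=H^{\mathrm n}$ has only rational singularities by \xref{sde1}.

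For the local picture at $P$ I would pass to the index-one cover $(X^\sharp,P^\sharp)\to(X,P)$ and use the explicit presentation \eqref{equation-XC}--\eqref{equation-alpha} of the \type{(IIA)} point together with the $\muu_4(1,1,3,2)$-action. From the semi-invariant $\alpha$ and the description of $(I^\sharp)^{(2)}$ one reads off the surface germ $(H,P)$ and its minimal resolution; because the leading term of $\alpha$ and the relevant variable depend on $\ell(P)\bmod 4$, with the residue $2$ excluded in \xref{(7.5)}, the sub-configuration of the dual graph contributed near $P$ differs for $\ell(P)=1,3,4,5$. In parallel I would compute the graded modules $\gr^n_C\OOO=I^{(n)}/I^{(n+1)}$ as $\OOO_C$-modules, $C\cong\PP^1$, reading their torsion off the localizations at $P$ and at $R$ and controlling their degrees through $i_P(1)$ and \eqref{equation-iP-lP}. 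The vanishing $\h^1(\gr^1_C\OOO)=0$ coming from $R^1f_*\OOO_X=0$, combined with the standing hypothesis $\h^0(\gr^1_C\OOO)\neq 0$, pins these modules down and thereby fixes the self-intersection numbers marked in the graphs.

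Finally I would glue the local resolution data at $P$ and at the possible \type{(III)} point $R$ into the connected graph $\Delta(H,C)$, using $C\cong\PP^1$ and the fact, recorded in \xref{sde1}, that the induced map $H^{\mathrm n}\to H_Z$ is either a rational curve fibration when $f$ is a $\QQ$-conic bundle, or a birational morphism contracting $C$ to a point $o\in H_Z$ that is smooth or Du Val of type \type{A} or \type{D} when $f$ is divisorial. Matching the contracted configuration against the Du Val classification forces $f(H)\ni o$ to be of type \type{A_1} or \type{D_5} in the divisorial cases and decides whether the extra point $R$ can occur; in particular it is absent in \xref{1-main-theorem-1.1.3} and \xref{1-main-theorem-conic-bundle-1.1.4}. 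For each surviving graph I would then exhibit an explicit germ realizing it, as recorded in the references of the last column, to settle the claim that all possibilities occur.

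The main difficulty is the local-to-global bookkeeping at $P$: extracting the $\OOO_{\PP^1}$-module structure of the $\gr^n_C\OOO$ from the semi-invariant $\alpha$ at the index-$4$ cyclic quotient is delicate, and one must simultaneously track enough numerical data --- degrees, torsion lengths, and the invariant $i_P(1)$ --- both to rule out every configuration outside the table and to certify that $f$ has exactly the asserted form in each of the four cases.
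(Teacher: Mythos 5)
There is a genuine gap, and it sits at the heart of your first step. You deduce normality of a general $H$ from $\h^0(\gr^1_C\OOO)\neq 0$ by arguing that a general member "contains $C$ with multiplicity one", i.e.\ is smooth at the generic point of $C$, and then invoking Bertini and Serre's criterion. But $\gr^1_C\OOO=I/I^{(2)}$ is only a sheaf: what you actually need is that some member of $|\OOO_X|_C$, i.e.\ some \emph{global} section of $I$, has nonzero image in $\h^0(\gr^1_C\OOO)$, and the natural map $\h^0(I)\to\h^0(I/I^{(2)})$ need not hit a nonzero section --- the hypothesis only says the target is nonzero. This is not a technicality: the Remark following the Corollary to \ref{main-1} records that for germs of type \type{(IA)} a general $H$ can be non-normal even though $\h^0(\gr^1_C\OOO)\neq 0$ \cite{Mori-Prokhorov-2010}. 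Since your argument ($H$ Cartier in a CM threefold gives $S_2$, generic smoothness along $C$ plus Bertini gives $R_1$) uses nothing specific to type \type{(IIA)}, it would prove normality in the \type{(IA)} case as well, so it cannot be correct as stated. In the paper normality is obtained only case by case, by constructing sections $\beta\in\h^0(I)$ with prescribed monomials via the extension lemmas \ref{lemma-extend-sections-D}, \ref{lemma-extend-sections-E}, \ref{case-up-1-s-in--KX}, \ref{y12y4-in-beta}, \ref{H-is-normal}, which rest on anticanonical members $D,E\in|-K_X|$, Grauert--Riemenschneider vanishing (resp.\ the trace map for $\QQ$-conic bundles), and the laminal-ideal filtration of \ref{definition-J}.

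The second half of your plan is also underpowered. The dual graph of $(H,P)$ is not readable from $\alpha$ and the modules $\gr^n_C\OOO$ alone; it depends on which monomials ($y_4^2$, $y_2y_3$, $y_1y_3$, $y_1^3y_2$, $y_1^2y_4$, \dots) occur in the equation $\beta$ of $H$ and on the genericity of their coefficients, after which the weighted-blowup Computations \ref{computation-lP=3a-new1} and \ref{computation-lP=4} produce the graphs. Moreover, "degrees, torsion lengths and $i_P(1)$" do not suffice to exclude the configurations absent from the table. The paper eliminates subcases by two mechanisms your outline never invokes: certain $\ell$-splittings of $\gr^1_C\OOO$ or $\gr^2(\OOO,J)$ force $(X,C)$ to be flipping by the classification in \cite[7.2.4, 7.3.4, 7.4.4]{Kollar-Mori-1992} --- this is precisely where the non-flipping hypothesis is used --- while others are killed by the Euler-characteristic argument of Lemma \ref{Lemma-fiber}, which exploits the contraction $f$ itself ($\chi(\omega\totimes(\OOO/\LLL))<0$ contradicts \cite[1.2.1]{Mori-1988} in the birational case, and the bound $-K_X\cdot f^{-1}(o)=2$ in the $\QQ$-conic bundle case). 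Without these two ingredients, and without control of $\beta$, the case analysis cannot close, even though your overall skeleton (division by $\ell(P)\in\{1,3,4,5\}$, matching against \ref{sde1}, and explicit examples for existence) does agree with the paper's.
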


\begin{thm}{\bf Corollary}
In the notation of \xref{main-1}
a general member $H\in |\OOO_X|_C$ is normal
if and only if $\h^0(\gr^1_C\OOO)\neq 0$.
\end{thm}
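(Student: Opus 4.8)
The plan is to read off the two implications separately, confining the substantive work to the ``only if'' direction, since the forward implication is precisely the conclusion of Theorem \ref{main-1}. For that forward direction I would simply invoke Theorem \ref{main-1}: under the standing hypotheses that $(X,C)$ is non-flipping and carries a point of type \type{(IIA)}, the assumption $\h^0(\gr^1_C\OOO)\neq 0$ already forces the general member $H\in|\OOO_X|_C$ to be normal, so nothing further is needed.

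For the reverse implication I would argue by contraposition, showing that $\h^0(\gr^1_C\OOO)=0$ forces \emph{every} member of $|\OOO_X|_C$ to be non-normal. First I would record the general normality criterion: a member $H\in|\OOO_X|_C$ is a Cartier divisor $\{g=0\}$ for some $g\in\h^0(I)$, and since $X$ has terminal, hence Cohen--Macaulay, singularities, the hypersurface $H$ is Cohen--Macaulay and in particular $S_2$. By Serre's criterion $H$ is then normal if and only if it is regular in codimension one, i.e. (as $\dim H=2$) if and only if $\Sing H$ is finite; thus $H$ fails to be normal as soon as $C\subseteq\Sing H$. Next I would relate the location of $\Sing H$ to the defining function: from the exact sequence $0\to I^{(2)}\to I\to\gr^1_C\OOO\to 0$ and the vanishing $\h^0(\gr^1_C\OOO)=0$, every $g\in\h^0(I)$ maps to zero in $\h^0(\gr^1_C\OOO)$ and hence lies in $I^{(2)}$. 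At a general point $p$ of $C$, which is a smooth point of $X$ lying away from $P$ and from the possible extra type \type{(III)} point $R$, the ideal $I$ is regular and the symbolic square coincides with the ordinary square, $I^{(2)}_p=I_p^2$. Choosing local coordinates $(x,u,v)$ with $C=\{u=v=0\}$, the condition $g\in(u,v)^2$ says exactly that the differential of $g$ vanishes at $p$, so $H=\{g=0\}$ is singular at $p$; letting $p$ range over a dense open subset of $C$ gives $C\subseteq\Sing H$, whence $H$ is non-normal.

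Combining the two directions yields the corollary. The one delicate point is the identification of non-normality with singularity along $C$, and here I deliberately sidestep the analytically awkward non-Gorenstein point $P$ and the type \type{(III)} point $R$ altogether, since testing the differential of $g$ at a single general smooth point of $C$ already produces a one-dimensional singular locus and hence forces $R_1$ to fail. The genuine depth of the equivalence lies entirely in the forward implication, which is the content of Theorem \ref{main-1}; the present corollary merely packages its converse, so within this proof there is no serious obstacle beyond the reduction to the $S_2$/$R_1$ dichotomy.
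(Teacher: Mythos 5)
Your proposal is correct and matches the argument the paper implicitly relies on: the ``if'' direction is exactly the content of Theorem \ref{main-1}, and the ``only if'' direction is the standard observation that when $\h^0(\gr^1_C\OOO)=0$ every section of $\h^0(I)$ lies in $I^{(2)}$, so every member of $|\OOO_X|_C$ is singular along $C$ and hence fails $R_1$, i.e.\ is non-normal. (The appeal to Cohen--Macaulayness/$S_2$ is harmless but unnecessary, since normality alone already forces regularity in codimension one.)
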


\begin{remark}
A general member $H$ may be non-normal even if $\h^0(\gr^1_C\OOO)\neq 0$, e.g. in case of \type{(IA)} \cite{Mori-Prokhorov-2010}.
\end{remark}

\section{Preliminary results}
\begin{de}\label{ge}
Recall that in our case
a general member $D\in |-K_X|$ does not contain $C$
\cite[Th. 7.3]{Mori-1988},
\cite[Prop. 1.3.7]{Mori-Prokhorov-2008}.
Hence $D\cap C=\{P\}$ and $D$ has at $P$ a singularity of type \type{D_{2n+1}}
\cite[6.4B]{Reid-YPG1987}.
Note that $D\simeq f(D)$ if $f$ is birational and
$f_D: D\to Z$ is finite of degree $2$ if $f$ is a $\QQ$-conic bundle.
In the coordinates $y_1,\dots,y_4$, the divisor $D$ is given by
\[
D=\{y_1= \xi\}/\muu_4,\qquad \xi\in (y_2,\, y_3,\, y_4).
\]
\end{de}

\begin{slemma}\label{lemma-extend-sections-D}
We have a natural surjection
$\h^0(\OOO_X) \twoheadrightarrow \OOO_D \mod (y_2,y_3,y_4)^3$.
In particular, $y_4^2$ and $y_2y_3$ appear with arbitrary coefficients
in a member $\beta \in \h^0(I)$.
\end{slemma}

\begin{proof}
Consider the following exact sequence
\[
0\longrightarrow \omega_X \longrightarrow \OOO_X
\longrightarrow \OOO_D \longrightarrow 0
\]
(cf. \cite[Th. 1.2]{Mori-1988}).
If $f$ is a birational contraction, then
$R^1f_*\omega_X=0$ by the Grauert-Riemenschneider vanishing theorem.
Hence any section $\bar s\in \OOO_D$
lifts to a section $s\in f_*\OOO_X$.
So, the assertion is clear in this case.
Assume that $f$ is a $\QQ$-conic bundle.
Let $\tau=f_D : D\to Z$ be the induced double cover.
Obviously, $\tau:=f|_D$ is 
of degree $2$.
Since $R^1f_*\omega_X=\omega_Z$ (see \cite[Lemma 4.1]{Mori-Prokhorov-2008})
and $\omega_D\simeq\OOO_D$, we have
the following split exact sequence
\[
0 \longrightarrow
f_*\OOO_X \longrightarrow \tau_* \omega_D \xrightarrow{\operatorname{Tr}_{D/Z}}
\omega_Z\longrightarrow 0,
\]
where by smoothness of $Z$ the splitting homomorphism 
$\omega_Z\to \tau_*\omega_D$ is induced by 
\[
\tau^*\Omega_Z^2=\tau^*\omega_Z\subset \Omega_D^2\longrightarrow
\omega_D.
\]
Hence, as in \cite[2.1-2.2]{Mori-Prokhorov-2008III},
we have a surjection
\begin{equation}\label{20120304}
f_*\OOO_X \longrightarrow \tau_* (\omega_D/\tau^*\omega_Z).
\end{equation}
We may write the equation of $D^\sharp$
in $\CC^3_{y_2,y_3,y_4}$ as follows:
\[
\gamma(y_2,y_3,y_4):=\alpha(\xi,y_2,y_3,y_4)
\in (y_2,y_3,y_4)^2.
\]
Locally, near $P^\sharp$, the sheaf $\omega_{D^\sharp}$ is generated by
\[
\eta:=
\operatorname{Res}\frac{dy_2\wedge d y_3\wedge d y_4}{\gamma}
=
-\frac{dy_2\wedge d y_3}{\partial \gamma/\partial y_4}
=
\frac{dy_2\wedge d y_4}{\partial \gamma/\partial y_3}
=
-\frac{dy_3\wedge d y_4}{\partial \gamma/\partial y_2}.
\]
Since $\eta$ is an invariant, it is also a generator of $\omega_{D}$
near $P$.
Since the coordinates $u_i$ of $(D,P)$ belong to
$(y_2,y_3,y_4)^2$
and
$\Omega_D^2$ is generated by the elements
$d u_i\wedge d u_j$, we see that
$\tau^*\Omega_Z^2=\eta \Im$, where
\[
\Im^\sharp \subset
(y_2,y_3,y_4)^2 \left(\partial \gamma/\partial y_4,\
\partial \gamma/\partial y_3,\
\partial \gamma/\partial y_2\right) \subset (y_2,y_3,y_4)^3.
\]
Finally we note
that $y_4^2$ and $y_2y_3$ are independent
modulo
$\bigl((y_2,y_3,y_4)^3\OOO_{D^\sharp}\bigr)^{\muu_4}$.
\end{proof}

\begin{slemma}\label{lemma-extend-sections-E}
Suppose we are given a general member $E \in |-K_X|$
containing $C$ 
and that it is defined by $y_2=\xi$ near $P$, where $\xi \in (y_3,y_4)
\cap (y_1,y_3,y_4)^2$.
Then we have a surjection
\[
\h^0(\OOO_X) \twoheadrightarrow
\operatorname{Im} \left(\h^0( \OOO_E)\to \OOO_{E,P}/\NNN\right),
\]
where $\NNN^\sharp:=\bigl(y_3(y_1,y_3,y_4)^2+(y_1,y_3,y_4)^4\bigr) \subset \OOO_{E^\sharp, P^\sharp}$
defines $\NNN\subset \OOO_{E,P}$.
\end{slemma}
\begin{proof}
If $f$ is birational we have a surjection $\h^0(\OOO_X) \twoheadrightarrow
\h^0( \OOO_E)$,
otherwise we apply the same argument as in the proof of the previous lemma.
Note that the map \eqref{20120304} is still surjective because
the proof of \cite[2.1-2.2]{Mori-Prokhorov-2008III} works even
if $\tau:=f_E: E\to Z$ is not finite but generically finite.

We may assume that the equation of $E^\sharp$
in $\CC^3_{y_1,y_3,y_4}$ is as follows:
\[
\gamma(y_1,y_3,y_4):=\alpha(y_1,\xi,y_3,y_4)
\equiv c y_3^2 \mod (y_1,y_3,y_4)^3,
\]
where $c \in \CC^*$.
Note that
$\partial \gamma/\partial y_i \in (y_3)+(y_1,y_3,y_4)^2$.
As in the previous proof, we 
have a generator $\eta$ of $\omega_E^\sharp$ and
$\tau^*\Omega_Z^2=\Im \omega_E$, where
\[
\Im^\sharp 
\subset
(y_1,y_3,y_4)^2 \left(\partial \gamma/\partial y_4,\
\partial \gamma/\partial y_3,\
\partial \gamma/\partial y_1\right) 
\subset \NNN^\sharp.
\]
We are done because of the surjection
$\h^0(\OOO_X) \twoheadrightarrow \h^0(\OOO_E/\Im)$.
\end{proof}

\begin{de}\label{definition-J}
The techniques of \cite{Mori-1988} will be used freely.
For convenience of references, we recall a few facts from \cite[ch. 8]{Mori-1988}.
An ideal $J\subset \OOO_X$ is said to be \textit{laminal} if
it is an $I$-primary ideal and $I^{(2)}\not\supset J$. The \textit{width} of $J$ is the smallest
$d$ such that $J\supset I^d$. In this situation, define
\[
F^n(\OOO,J):=\Sat(J^qI^r+J^{q+1}),
\quad
\gr^n(\OOO,J):= F^n(\OOO,J)/F^{n+1}(\OOO,J),
\]
where $q:=\lfloor n/d \rfloor$, $r:=n-dq$. In particular, if $d=2$, then
$F^1(\OOO,J)=I$, $F^2(\OOO,J)=J$,
$F^3(\OOO,J)=\Sat(JI)$, $F^4(\OOO,J)=J^{(2)}$.
Further, under above assumptions there exists a natural \textit{saturated} filtration
\[
\gr^n(\OOO,J)=\Phi^0\gr^n(\OOO,J)\supset \cdots \supset\Phi^{q}\gr^n(\OOO,J)\supset
\Phi^{q+1}\gr^n(\OOO,J)=0
\]
such that each quotient
\[
\gr^{n,i}(\OOO,J):= \Phi^i\gr^n(\OOO,J)/\Phi^{i+1}\gr^n(\OOO,J)
\]
is a torsion free $\OOO$-module of rank $1$ \cite[8.6]{Mori-1988}. In particular,
we have $\gr^{2,0}(\OOO,J)=J/J\cap I^{(2)}$ and 
an $\ell$-exact sequence \cite[8.2.2)]{Mori-1988}
\begin{equation}
\label{equation-exact-sequence}
0\longrightarrow \gr^{2,1}(\OOO,J) \longrightarrow\gr^2(\OOO,J)
\xrightarrow{\alpha_J} \gr^{2,0}(\OOO,J) \longrightarrow 0.
\end{equation}
\end{de}

\begin{lemma}\label{construction-s}
Let $(X,C)$ be an extremal curve germ and let $D\in |-K_X|$
be a general member as in \xref{ge}. 
Let $J$ be as in \xref{definition-J} with $d=2$.
Assume $\gr^1 (\OOO,J)=(-1+bP^\sharp)$ with $0\le b \le3$. Then the natural map
\[
\h^0\bigl( F^2(\OOO(D),J)\bigr) \longrightarrow \gr^2\bigl(\OOO(D),J\bigr)\totimes \CC_{P}
\]
is surjective.
\end{lemma}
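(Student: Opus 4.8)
The plan is to strip off the two filtrations introduced in \xref{definition-J} and reduce the assertion to cohomological vanishing on $C\simeq\PP^1$. Writing $F^n=F^n(\OOO(D),J)$ and $\gr^2=\gr^2(\OOO(D),J)$, the map in question factors as
\[
\h^0(F^2)\longrightarrow \h^0(\gr^2)\longrightarrow \gr^2\totimes\CC_P,
\]
the first arrow coming from the defining sequence $0\to F^3\to F^2\to \gr^2\to0$. Thus it suffices to prove two things: that $\h^1(F^3)=0$, so that the first arrow is onto, and that $\gr^2$ is generated by its global sections at $P$, so that the second arrow is onto.

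For the generation at $P$ I would tensor the $\ell$-exact sequence \eqref{equation-exact-sequence} with $\OOO(D)$, obtaining
\[
0\longrightarrow \gr^{2,1}(\OOO(D),J)\longrightarrow \gr^2\longrightarrow \gr^{2,0}(\OOO(D),J)\longrightarrow0,
\]
in which both outer terms are rank-one torsion-free $\OOO_C$-modules by \cite[8.6]{Mori-1988}. On $\PP^1$ such a module is globally generated at $P$ once its base degree is $\ge0$, and has vanishing $\h^1$ once its base degree is $\ge-1$, up to the orbifold correction carried by the $P^\sharp$-weight at the index-$4$ point. The same dichotomy, applied to the rank-one pieces of $F^3$, reduces the vanishing $\h^1(F^3)=0$ to the analogous degree estimate.

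Everything therefore comes down to a degree count. Starting from the hypothesis $\gr^1(\OOO,J)=(-1+bP^\sharp)$ with $0\le b\le3$, I would read off the base degrees of $\gr^{2,0}(\OOO,J)$ and $\gr^{2,1}(\OOO,J)$ from the multiplication in the associated graded ring of $(\OOO,J)$ and from the $\ell$-exactness, as in \cite[ch. 8]{Mori-1988}. The decisive positive input is the twist by $D$: since $-K_X$ is $f$-ample and, by \xref{ge}, $D$ meets $C$ only at $P$ with a Du Val singularity of type \type{D_{2n+1}}, tensoring with $\OOO(D)=\OOO_X(-K_X)$ raises each base degree by $-K_X\cdot C>0$ and shifts the $P^\sharp$-weight according to the weight of $-K_X$ at $P$, which is $\equiv1\bmod4$ for $\muu_4(1,1,3,2)$. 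The claim is that for every admissible $b\in\{0,1,2,3\}$ this shift is exactly enough to bring all the relevant base degrees up to $\ge-1$ (respectively $\ge0$ for the generation at $P$), after which $\h^1=0$ on $\PP^1$ and the desired surjectivity follow.

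The main obstacle is the local bookkeeping at $P$. Because $P$ is the non-Gorenstein, index-$4$ point of type \type{(IIA)}, the operation $\totimes$ must be computed on the index-one cover $X^\sharp$ and then restricted to $\muu_4$-invariants, and the orbifold contributions of $\gr^{2,0}$, $\gr^{2,1}$ and of $\OOO_X(-K_X)$ to the degrees at $P^\sharp$ have to be extracted from the explicit normal form \eqref{equation-XC}--\eqref{equation-alpha} of $\alpha$ together with the weights $\muu_4(1,1,3,2)$. Getting these $P^\sharp$-weights right, uniformly in $b$, is where the real work lies; once the two base-degree inequalities are verified, the cohomological conclusion is immediate.
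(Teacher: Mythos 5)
Your factorization through $\h^0(\gr^2(\OOO(D),J))$ creates a gap that cannot be closed from the stated hypothesis. Both of your intermediate claims --- $\h^1(F^3(\OOO(D),J))=0$ and generation of the fiber $\gr^2(\OOO(D),J)\totimes\CC_P$ by global sections --- are constraints on $\gr^{2,0}(\OOO,J)$, $\gr^{2,1}(\OOO,J)$ and the higher graded pieces, whereas the lemma assumes only $\gr^1(\OOO,J)=(-1+bP^\sharp)$; no degree bound on $\gr^2$ or on $\gr^n$, $n\ge 3$, follows from that. Concretely, the paper itself lists, under exactly this hypothesis (with $b=1$), the a priori possible structure $\gr^2(\OOO,J)=(1+2P^\sharp)\toplus(-2+3P^\sharp)$, case \fref{equation-possibilities-(7.8.1.1)-a}{c}. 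Twisting by $D$ is $\totimes(P^\sharp)$, so this produces the summand $(-2+4P^\sharp)=(-1)$, which has $\h^0=0$ and yet a nonzero $\muu_4$-invariant fiber at $P$: your generation claim fails, and the higher pieces $\gr^n(\OOO(D),J)$ then acquire degrees $\le -2$, so the $\h^1$-vanishing needed for $F^3(\OOO(D),J)$ is out of reach as well. Such configurations are eliminated in the paper only later and by a different mechanism (Lemma \ref{Lemma-fiber}(ii), applied in Lemma \ref{Lemma-possibilities-ac}), which uses the global geometry of the extremal curve germ, not the hypothesis of this lemma; so the ``degree count'' you propose is, in effect, the conclusion rather than an available input. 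A second, quantitative error: twisting by $\OOO(D)$ does not raise underlying degrees by $-K_X\cdot C$ (which equals $1/4$ here); it raises the $P^\sharp$-weight by $1$, which increases the underlying degree by $1$ only for summands of weight $3$ and leaves all others unchanged.

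The paper's proof avoids $\gr^2$ altogether, and this is the idea you are missing. From the hypothesis one gets $\h^1(F^2(\OOO,J))=\h^1(J)=0$: indeed $\gr^1(\OOO,J)$ has degree $-1$, so $\h^0(\OOO/J)=\CC$ is spanned by constants and $\h^1(\OOO/J)=0$, and then $R^1f_*\OOO_X=0$ gives the vanishing. Hence $\h^0(F^2(\OOO(D),J))$ surjects onto the skyscraper quotient $F^2(\OOO(D),J)/F^2(\OOO,J)=J(D)/J$, supported at $P=D\cap C$. Then the purely local inclusion $J^\sharp\subset J^\sharp(D^\sharp)\mm_{P^\sharp}$ (the local equation $y_1-\xi$ of $D^\sharp$ vanishes at $P^\sharp$) shows that $J^\sharp(D^\sharp)/J^\sharp$ surjects onto
\[
J^\sharp(D^\sharp)/\bigl(J^\sharp(D^\sharp)\mm_{P^\sharp}+\Sat(I^\sharp J^\sharp)(D^\sharp)\bigr)
=\gr^2\bigl(\OOO(D),J\bigr)^\sharp\otimes\CC_{P^\sharp},
\]
and taking $\muu_4$-invariants (an exact operation in characteristic $0$) yields the statement. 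No information about $\gr^2$, and no vanishing beyond $\h^1(J)=0$, is ever used; replacing the sheaf $\gr^2(\OOO(D),J)$ on $C$ by the skyscraper $J(D)/J$ is precisely what makes the lemma provable in the stated generality.
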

\begin{proof}
By our hypothesis $\h^1(F^2(\OOO,J))=0$. So there is a surjection
\[
\h^0\bigl( F^2(\OOO(D),J)\bigr) \longrightarrow \h^0\bigl( F^2(\OOO(D),J)/ F^2(\OOO,J)\bigr)
= J(D)/J.
\]
Since $J^\sharp\subset J^\sharp(D^\sharp)\mm_{P^\sharp}$,
we have
\begin{multline*}
J^\sharp(D^\sharp)/J^\sharp \twoheadrightarrow
J^\sharp(D^\sharp)/ J^\sharp(D^\sharp)\mm_{P^\sharp}+
\Sat(I^\sharp J^\sharp)(D^\sharp)=
\\
=\gr^2\bigl(\OOO(D),J\bigr)^\sharp\totimes_{\OOO_{P^\sharp}} \CC_{P^\sharp}.
\end{multline*}
Taking the $\muu_4$-invariant part, we obtain our statement.
\end{proof}

\begin{thm}{\bf Lemma.}\label{case-up-1-s-in--KX}
Under the assumptions of \xref{construction-s}, suppose
$\gr^2 (\OOO,J)=(aP^\sharp)\toplus (-1+3P^\sharp)$ with $a\ge 0$. 
Then there is a global section $s$ of $F^2(\OOO(D),J)$ such that
$E=\{s=0\} \in |-K_X|_C$ induces an $\ell$-isomorphism
\begin{equation}\label{equation-sect-2-gr2OJ}
\gr^2(\OOO,J)= (aP^\sharp)\toplus\OOO_C(-E).
\end{equation}
\end{thm}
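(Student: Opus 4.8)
The plan is to build $s$ directly out of the surjectivity furnished by \xref{construction-s} and then to recognise the rank-one sub-$\ell$-sheaf of $\gr^2(\OOO,J)$ that it generates as $\OOO_C(-E)$. First I would twist the hypothesised splitting by the general member $D\in|-K_X|$ of \xref{ge}, obtaining
\[
\gr^2(\OOO(D),J)=(aP^\sharp)(D)\toplus(-1+3P^\sharp)(D),
\]
so that the $\toplus$-decomposition supplies a projection $\pr_2$ onto the second factor, compatible with the corresponding projection $\gr^2(\OOO,J)\to(-1+3P^\sharp)$ which I also denote $\pr_2$. By \xref{construction-s} the evaluation map $\h^0\bigl(F^2(\OOO(D),J)\bigr)\to\gr^2(\OOO(D),J)\totimes\CC_P$ is onto the (two-dimensional) fibre at $P$, so I may pick $s\in\h^0\bigl(F^2(\OOO(D),J)\bigr)$ whose image there has $\pr_2$-component a generator of $(-1+3P^\sharp)(D)\totimes\CC_P$. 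By Nakayama the reduction $\bar s\in\gr^2(\OOO(D),J)$ of $s$ then generates the second summand in a neighbourhood of $P$.

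Next I would extract the divisor. As $s$ is a section of $\OOO_X(D)\cong\OOO_X(-K_X)$, its zero scheme $E=\{s=0\}$ lies in $|-K_X|$; and since $s\in F^2(\OOO(D),J)=J(D)\subset I(D)$ vanishes along $C$, the divisor $E$ contains $C$, i.e.\ $E\in|-K_X|_C$. Reading $s$ as a homomorphism $\OOO_X(-E)\to J$ (its composite with $J\hookrightarrow\OOO_X$ being the inclusion of the ideal of $E$), restriction to $C$ followed by $F^2(\OOO,J)\to\gr^2(\OOO,J)$ yields an $\ell$-homomorphism
\[
\iota\colon\OOO_C(-E)\longrightarrow\gr^2(\OOO,J)
\]
carrying the tautological generator to $\bar s$. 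It then remains to check that the composite $\pr_2\circ\iota\colon\OOO_C(-E)\to(-1+3P^\sharp)$ is an $\ell$-isomorphism: granting this, $\iota$ splits $\pr_2$ and the complement $\operatorname{Im}\iota$ realises the decomposition \eqref{equation-sect-2-gr2OJ}.

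By construction $\pr_2\circ\iota$ is a generator at $P$, hence nonzero and, by Nakayama, surjective near $P$; since a nonzero $\ell$-homomorphism between the rank-one $\ell$-modules $\OOO_C(-E)$ and $(-1+3P^\sharp)$ on $C\simeq\PP^1$ is injective with cokernel of length $\deg(-1+3P^\sharp)-\deg\OOO_C(-E)$, everything reduces to the equality of these two degrees, equivalently to the absence of any further zero of $s$ along $C$. This is the point I expect to require the most care: it amounts to the local computation at the \type{(IIA)} point identifying $\OOO_C(-E)$, for $E\in|-K_X|_C$, with the $\ell$-sheaf $(-1+3P^\sharp)$, including the $\muu_4$-equivariant bookkeeping of the contribution at $P$ coming from the index-one cover. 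Once this identification is in place, $\pr_2\circ\iota$ is an $\ell$-isomorphism and $\gr^2(\OOO,J)=(aP^\sharp)\toplus\OOO_C(-E)$ follows.
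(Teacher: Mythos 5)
Your construction of $s$, of $E=\{s=0\}\in|-K_X|_C$, and of the $\ell$-homomorphism $\iota\colon\OOO_C(-E)\to\gr^2(\OOO,J)$ follows the same strategy as the paper, and your reduction of the lemma to the claim that $\pr_2\circ\iota$ is an $\ell$-isomorphism is correct. But that claim is precisely the content of the lemma, and you do not prove it: you set it aside as ``the local computation at the \type{(IIA)} point identifying $\OOO_C(-E)$ with $(-1+3P^\sharp)$'' and never carry it out. As written this is a genuine gap. Nothing in your argument excludes the possibility that the class of $s$ in $\gr^2(\OOO(D),J)$ vanishes at some point of $C\setminus\{P\}$, i.e.\ that $\deg\OOO_C(-E)<\deg(-1+3P^\sharp)$; in that case $\operatorname{Im}\iota$ would be a proper subsheaf of a complement of $(aP^\sharp)$ and the asserted $\ell$-splitting would fail.

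The missing step is closed by exactly the degree bookkeeping you postponed, and it is a one-line observation rather than a delicate local computation. After twisting by $D$ the second summand becomes
\begin{equation*}
(-1+3P^\sharp)\totimes(P^\sharp)=(0)\simeq\OOO_C,
\end{equation*}
since $P$ has index $4$ and the four copies of $P^\sharp$ convert into $+1$ in degree. Hence the $\pr_2$-component of $\bar s$ is a section of $\OOO_C\simeq\OOO_{\PP^1}$ that is nonzero at $P$, so it is a nonzero constant and vanishes nowhere: $s$ can have no further zeros along $C$, and $\pr_2\circ\iota$, twisted by $(P^\sharp)$, is automatically an isomorphism. This is in effect how the paper argues: its section $\bars=(\unit)\cdot(y_2+\cdots)/y_1$ is a \emph{nowhere vanishing} section of $\gr^2(\OOO(D),J)$, giving $\gr^2(\OOO(D),J)=((a+1)P^\sharp)\toplus\OOO_C\cdot\bars$, which untwists to \eqref{equation-sect-2-gr2OJ}. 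Alternatively, you could finish your version by noting that $E\in|-K_X|$ gives $\OOO_X(-E)\simeq\omega_X$, hence $\OOO_C(-E)\simeq\omega_X\totimes\OOO_C=\gr^0_C\omega=(-1+3P^\sharp)$ by the standard computation of $\gr^0_C\omega$ at a \type{(IIA)} point in \cite{Mori-1988}. Either way, the reason the degrees match is the heart of the lemma, and it is the step your write-up omits.
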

\begin{proof}
We have by Lemma \ref{construction-s} a global section $s$ of
$F^2(\OOO(D),J)$ inducing a nowhere vanishing
section $\bars=(\unit) \cdot (y_2+\cdots )/y_1$ of $\gr^2(\OOO(D),J)$ at $P$
so that there is an $\ell$-isomorphism
\[
\gr^2\bigl(\OOO_X(D),J\bigr)= \bigl((a+1)P^\sharp\bigr)\toplus\OOO_C\cdot \bars,
\]
which is \eqref{equation-sect-2-gr2OJ}$\totimes (P^\sharp)$.
\end{proof}

\begin{sde}\label{define-pF}
Assumptions as in Lemma \xref{case-up-1-s-in--KX}.
For $n\in\ZZ$, let $q:=\lfloor n/2\rfloor$ and $r:=n-2q\in \{0,\, 1\}$.
Let
\[
\begin{array}{l}
pF^n(\OOO_E,J):=
\OOO_EJ^qI^r \quad\text{outside of $P$,}
\\[5pt]
pF^n(\OOO_E,J)^\sharp:=
\OOO_{E^\sharp}J^{\sharp q}I^{\sharp r},\quad
pF^n(\OOO_E,J)=(pF^n(\OOO_E,J)^\sharp)^{\muu_4}\quad
\text{at $P$.}
\end{array}
\]
This defines $pF^n(\OOO_E,J)$ with $\ell$-structure.
Thus for $m>n$ the sheaf
\[
\pgr^{n,m}(\OOO_E,J):=pF^n(\OOO_E,J)/pF^{m}(\OOO_E,J)
\]
has an induced $\ell$-structure. We omit $m$ for simplicity if $m=n+1$. 
\end{sde}

\begin{thm}{\bf Lemma.}\label{grnE-formulae}
Assumptions as in Lemma \xref{case-up-1-s-in--KX}.
We have
\begin{eqnarray}
\label{F12E}
pF^1(\OOO_E,J)&=&I\OOO_E, \hspace{13pt} pF^2(\OOO_E,J)=J\OOO_E,
\\
\label{gr1E}
\pgr^1(\OOO_E,J)&=&\gr^1(\OOO,J)= (-1+bP^\sharp),
\\
\label{gr2E}
\qquad\pgr^2(\OOO_E,J)/(\torsion)&=&\gr^2(\OOO,J)/\OOO_C(-E)=(aP^\sharp),
\end{eqnarray}
and an $\OOO_C$-homomorphism which is generically an isomorphism:
\begin{equation}
\label{grnE}
\pgr^2(\OOO_E,J)^{\totimes q} \totimes \pgr^1(\OOO_E,J)^{\totimes r}
\to
\pgr^n(\OOO_E,J)/(\torsion).
\end{equation}
If $b=2$ \textup(resp. $b=1$\textup), 
then for some $\ell$-coordinates \textup(see \xref{(7.5)}\textup) 
we have $I^\sharp \OOO_{E^\sharp}=(y_3,y_4)$,
$J^\sharp \OOO_{E^\sharp}=(y_3,y_4^2)$
\textup(resp. $J^\sharp \OOO_{E^\sharp}=(y_4)$ and $a\equiv 2\mod 4$\textup) and
\begin{equation}\label{equation-new-pgr2}
\pgr^2(\OOO_{E},J) \otimes \CC_P =
\begin{cases}
\CC\cdot y_1y_3 \oplus \CC\cdot y_4^2 \simeq \CC^2& \text{if $b=2$,}
\\ 
\CC\cdot y_1^2 y_4 \simeq \CC &\text{if $b=1$}.
\end{cases}
\end{equation}
\end{thm}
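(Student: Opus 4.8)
The plan is to derive the structural formulae \eqref{F12E}--\eqref{grnE} directly from the definition of the projected filtration in \xref{define-pF} together with the splitting of $\gr^2(\OOO,J)$ furnished by Lemma \xref{case-up-1-s-in--KX}, and then to establish the two local models \eqref{equation-new-pgr2} by an explicit computation in the \type{(IIA)} coordinates of \xref{(7.5)}.

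The identities \eqref{F12E} are immediate from \xref{define-pF}: for $n=1$ one has $q=0$, $r=1$, and for $n=2$ one has $q=1$, $r=0$, so $pF^1$ and $pF^2$ coincide with $I\OOO_E$ and $J\OOO_E$ both away from $P$ and, after taking $\muu_4$-invariants, at $P$. For \eqref{gr1E} and \eqref{gr2E} the key observation is that the local equation of $E$ lies in $J$ all along $C$. Indeed, the section $s$ produced in Lemma \xref{case-up-1-s-in--KX} lies in $F^2(\OOO(D),J)=J\otimes\OOO(D)$, so $\phi_E:=s\,\phi_D\in J$, where $\phi_D$ is a local equation of $D$, generates $\OOO_X(-E)$; since $E\supset C$ we also have $\OOO_X(-E)\subset I$. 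Hence $\pgr^1(\OOO_E,J)=I\OOO_E/J\OOO_E=I/(J+\OOO_X(-E))=I/J=\gr^1(\OOO,J)$, the inclusion $\OOO_X(-E)\subset J$ killing the correction term. For \eqref{gr2E} I would use the $\ell$-splitting $\gr^2(\OOO,J)=(aP^\sharp)\toplus\OOO_C(-E)$: the second summand is generated by the class of $\phi_E$, which vanishes on $E$, so $\pgr^2(\OOO_E,J)$ equals $(aP^\sharp)$ modulo torsion supported at $P$.

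The homomorphism \eqref{grnE} is the restriction to $E$ of the multiplication on the associated graded ring of $(\OOO_X,J)$, induced by $J^qI^r\cdot J^{q'}I^{r'}\subset J^{q+q'}I^{r+r'}$. At the generic point of $C$ the threefold is smooth, $I$ is generated by two coordinates and $J$ by one of them together with the square of the other, so the graded ring is a polynomial ring and the map is an isomorphism there; this yields the asserted generic isomorphism after restriction to $E$.

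The real work, and the step I expect to be the main obstacle, is \eqref{equation-new-pgr2}. Since the section $s$ of Lemma \xref{case-up-1-s-in--KX} satisfies $\bars=(\unit)(y_2+\cdots)/y_1$ at $P$, the divisor $E$ is given near $P$ by $y_2=\xi$ with $\xi\in(y_3,y_4)\cap(y_1,y_3,y_4)^2$, and its equation on $E^\sharp$ is $\gamma(y_1,y_3,y_4)=\alpha(y_1,\xi,y_3,y_4)\equiv c\,y_3^2\bmod(y_1,y_3,y_4)^3$ with $c\in\CC^*$, as in \xref{lemma-extend-sections-E}; I would use the relation $\gamma=0$ to trade $y_3^2$ for a term of higher filtration order on $E^\sharp$. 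By \eqref{equation-alpha}, the laminal condition $y_j\notin J$, and the eigengenerator of $\gr^1=(-1+bP^\sharp)$, one checks that $b=2$ corresponds to $j=4$ (hence $\ell(P)\equiv0$) and $b=1$ to $j=3$ (hence $\ell(P)\equiv3$); this pins down $J^\sharp$ up to an $\ell$-coordinate change within the weight spaces $\langle y_1,y_2\rangle$, $\langle y_3\rangle$, $\langle y_4\rangle$. The identity $I^\sharp\OOO_{E^\sharp}=(y_3,y_4)$ is immediate from $\xi\in(y_3,y_4)$; the essential point is that after substituting $y_2=\xi$ and using $\gamma=0$ to absorb $y_3^2$ and the mixed terms of $\xi$ into $pF^3=JI\OOO_E$, the ideal $J^\sharp\OOO_{E^\sharp}$ collapses to $(y_3,y_4^2)$ when $b=2$ and to the principal ideal $(y_4)$ when $b=1$. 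Reading off the fiber $\pgr^2\otimes\CC_P$ then gives $\CC\cdot y_1y_3\oplus\CC\cdot y_4^2$ in the first case (a free summand generated by $y_3$ together with torsion supported at $P$) and the single class $\CC\cdot y_1^2y_4$ in the second; since in the latter case the eigengenerator $y_4$ has weight $2$, the splitting \eqref{gr2E} forces $a\equiv2\bmod4$. The delicate part will be to choose the $\ell$-coordinates so that the substitution $y_2=\xi$, the relation $\gamma=0$, and the normalization of $J$ are simultaneously in the stated form, and to identify the torsion in $\pgr^2$ correctly so that the passage to $(aP^\sharp)$ in \eqref{gr2E} is justified.
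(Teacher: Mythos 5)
Your handling of \eqref{F12E}--\eqref{gr2E} is essentially the paper's argument: the inclusion $\OOO_X(-E)\subset J$, which you justify exactly as the paper does via $s\in F^2(\OOO(D),J)$, kills the correction terms and reduces \eqref{gr2E} to the $\ell$-splitting of Lemma \xref{case-up-1-s-in--KX}. For \eqref{grnE}, however, you only assert the generic isomorphism: note that the analogous multiplication map upstairs, $\gr^2(\OOO,J)^{\totimes q}\totimes \gr^1(\OOO,J)^{\totimes r}\to \gr^n(\OOO,J)$, is \emph{not} generically an isomorphism (the target has rank $\lfloor n/2\rfloor +1$), so ``restriction to $E$'' is where all the content lies; the paper verifies it by writing $I=(u,v)$, $J=(u,v^2)$ and $E=\{uw=v^2\}$ or $\{u=wv^2\}$ at a general point of $C$ and checking that $\pgr^n(\OOO_E,J)$ is $\OOO_C\cdot u^qv^r$, resp. $\OOO_C\cdot v^n$. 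That is a small, fixable omission.

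The genuine error is in the step you yourself single out as the main one. You claim that ``the laminal condition $y_j\notin J$'' plus the eigengenerator of $\gr^1(\OOO,J)=(-1+bP^\sharp)$ yields $b=2\Leftrightarrow j=4$ (so $\ell(P)\equiv 0$) and $b=1\Leftrightarrow j=3$ (so $\ell(P)\equiv 3$). The premise is false and the conclusion is exactly backwards. By \xref{(7.5)} one has $(I^\sharp)^{(2)}=(y_j)+(I^\sharp)^2$, and since $J$ has width $2$ we get $I^2\subset J$, hence $y_j\in I^{(2)}\subset\Sat(I^2)\subset\Sat(J)=F^2(\OOO,J)$; in every case where the lemma is applied one even has $I\supset J\supset I^{(2)}$, so literally $y_j\in J$ (laminality means $J\not\subset I^{(2)}$, not $y_j\notin J$). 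Therefore the class of $y_j$ in $\gr^1(\OOO,J)=I/\Sat(J)$ is zero, and the $\ell$-eigengenerator at $P$ must be one of the two coordinates \emph{other than} $y_j$: for $b=1$ it is $y_3$, forcing $j\neq 3$, i.e. $\ell(P)\equiv 0,1\pmod 4$; for $b=2$ it is $y_4$, forcing $j\neq 4$, i.e. $\ell(P)\equiv 1,3\pmod 4$. This is precisely what the paper uses (``when $b=1$, we have $j\neq 3$''), and it matches the applications: the lemma is invoked with $b=2$ for $\ell(P)=3,5$ (\S\S\xref{section-lP=3}, \xref{section-lP=5}) and with $b=1$ for $\ell(P)=1,4$ (\S\S\xref{section-lP=1-Sing=P}, \xref{section-lP=4}); under your correspondence it would apply to none of them, and your configurations are self-contradictory ($j=3$ together with $b=1$ would make $y_3$ both zero and an $\ell$-basis in $I/\Sat(J)$). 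The mistake is not cosmetic: the collapse $J^\sharp\OOO_{E^\sharp}=(y_3^2,y_4)=(y_4)$ in the case $b=1$, and with it $a\equiv 2\pmod 4$, rests precisely on $j\neq 3$, since that is what guarantees that after substituting $y_2=\xi$ every term of $\alpha$ other than the unit multiple of $y_3^2$ lies in $(y_4)$. If $j$ were $3$, the term $y_1^{\ell(P)}y_3$ would survive the substitution and $(y_3^2,y_4)$ would not collapse to $(y_4)$. So your case division must be inverted before your computation of $J^\sharp\OOO_{E^\sharp}$ and of $\pgr^2(\OOO_E,J)\otimes\CC_P$ (whose stated outcomes are correct only because they are copied from the lemma) can actually be derived.
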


\begin{proof}
By $\OOO_X(-E) \subset J$, we see \eqref{F12E},
\eqref{gr1E} and
$J\OOO_E/IJ\OOO_E\simeq J/(IJ+\OOO_X(-E))$.
Since $J/(\Sat(IJ)+\OOO_X(-E))=\gr^2(\OOO,J)/\OOO_C(-E)$,
we have \eqref{gr2E}.
By \eqref{F12E},
the homomorphism \eqref{grnE} is induced.
At a general point $Q \in C$, we can find $u,v \in I$ such that $I=(u,v)$, $J=(u,v^2)$,
and $E=\{uw=v^2\}$ or $\{ u=wv^2\}$ for some $w \in \OOO_{X,Q}$.
We have
$\pgr^{n}(\OOO_E,J)=\OOO_C\cdot u^qv^r$
at $Q$ in the former case and
$\pgr^{n}(\OOO_E,J)=\OOO_C\cdot v^n$ in the latter. Whence \eqref{grnE} follows.
When $b=1$, we have $j\neq 3$ in 
\eqref{equation-alpha} 
and 
$\alpha=y_3^2\cdot (\unit) +y_4\cdot (\cdots)$. Whence $J^\sharp \OOO_E^\sharp= (y_3^2, y_4)=(y_4)$,
and so $a\equiv 2\mod 4$ by \eqref{gr2E}.
\end{proof}

\begin{sthm}{\bf Corollary.}\label{surjection-Fn-to-grnE}
For every $m> n \ge 0$, we have
$\h^1(\pgr^n(\OOO_E,J))=0$ and
a natural surjection
\[
\h^0(pF^n(\OOO_E,J)) \twoheadrightarrow \h^0(\pgr^{n,m}(\OOO_E,J)).
\]
\end{sthm}
\begin{proof}
By \eqref{grnE} the sheaf $\pgr^n(\OOO_E,J)/(\torsion)$
is invertible of degree $\ge -1$
and we have $\h^1\bigl(\pgr^n(\OOO_E,J)\bigr)=0$ by the exact sequence
\[
0 \to (\torsion) \to \pgr^n(\OOO_E,J) \to
\pgr^n(\OOO_E,J)/(\torsion) \to 0.
\]
Hence, for every $m >n$,
the exact sequence
\[
0 \longrightarrow
\pgr^m(\OOO_E,J) \longrightarrow
\pgr^{n,m+1} (\OOO_E,J)
\xrightarrow{\nu_{n,m}}
\pgr^{n,m} (\OOO_E,J)
\longrightarrow
0
\]
induces a surjection
\[
\h^0(\nu_{n,m}) :
\h^0\left(\pgr^{n,m+1} (\OOO_E,J)\right)
\twoheadrightarrow
\h^0\left(\pgr^{n,m} (\OOO_E,J)\right).
\]
If we denote the completion of $E$ along $C$ by $E^{\wedge}$,
then we have a surjection
$\h^0\bigl(E^{\wedge},pF^n(\OOO_E,J)\OOO_{E^{\wedge}}\bigr) \twoheadrightarrow
\h^0\bigl(\pgr^{n,m}(\OOO_E,J)\bigr)$.
By the formal function theorem,
we have approximating global sections of
$\h^0(pF^n(\OOO_E,J))$ and the required surjection.
\end{proof}

\begin{sthm}{\bf Corollary.}\label{y12y4-in-beta}
For general $\lambda,\, \mu\in \CC$, we have
$\beta\in \h^0(J)$ such that 
\[
\beta=\begin{cases}
\cdots + \lambda y_1y_3+\mu y_4^2+\cdots&\text{if $b=2$,}
\\
\cdots + \lambda y_1^2y_4+\mu y_4^2+\cdots&\text{if $b=1$.}
\end{cases}
\]
\end{sthm}

\begin{proof}
When $b=2$, we have a surjection 
$\h^0(J\OOO_{E}) \twoheadrightarrow \pgr^2 (\OOO_E, J)$
by Corollary \ref{surjection-Fn-to-grnE}, and 
there exists an element $\bar \beta\in\h^0(J\OOO_E)$ sent to 
$\lambda y_1y_3+\mu y_4^2\in \pgr^2(\OOO_E,J)\otimes \CC_P$
by \eqref{gr2E} and \eqref{equation-new-pgr2}.
By Lemma \ref{lemma-extend-sections-E} there exists $\beta\in \h^0(\OOO_X)$ 
sent to the image of 
$\bar \beta$ in $\OOO_E/\NNN$.
Since $\bar \beta (P)=0$, we have $\beta\in \h^0(I)=\h^0(J)$.
The case is settled because 
the substitution $y_2=\xi(y_1,y_3,y_4)$ 
in \ref{lemma-extend-sections-E} does not affect the coefficients of 
$y_1y_3$, $y_4^2$, which are a part of a standard basis 
of $(\OOO_E/\NNN)^\sharp$.

Assume that $b=1$.
We have 
$\pgr^2(\OOO_E,J)=(aP^\sharp)$ and 
$\pgr^4(\OOO_E,J) \hookleftarrow (2aP^\sharp)=(a/2)$
which is an isomorphism at $P^\sharp$,
whence $\ell$-bases $y_1^2y_4$ of $\pgr^2(\OOO_E,J)$ 
and $y_4^2$ of $\pgr^4(\OOO_E,J)$ at $P$.
By Corollary \ref{surjection-Fn-to-grnE}
we can lift $\lambda y_1^2y_4+\mu y_4^2+\cdots\in \pgr^{2,5}(\OOO_E,J)$
to $\bar \beta \in \h^0(J\OOO_E)$, and obtain $\beta$ similarly to the case $b=2$.
\end{proof}

\begin{thm}{\bf Lemma.}\label{H-is-normal}
Under the hypothesis of Lemma \xref{case-up-1-s-in--KX},
assume that $\gr^{2,0}(\OOO,J)\otimes \CC_P$
is generated by $\eta=y_1y_3$ \textup(resp. $y_1^2y_4$\textup)
when $b=2$ \textup(resp. $1$\textup). Then
$\eta\in \beta$ and 
a general member $H$ of $|\OOO_X|_C$ is normal.
\end{thm}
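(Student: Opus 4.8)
The plan is to verify normality for one conveniently chosen member of $|\OOO_X|_C$ and then promote it to the general member by semicontinuity, the single member being analysed on the index-one cover. First I would record $\eta\in\beta$. Corollary \xref{y12y4-in-beta} furnishes $\beta\in\h^0(J)$ of the form $\beta=\cdots+\lambda\eta+\mu y_4^2+\cdots$ with $\lambda,\mu$ general, while by hypothesis $\gr^{2,0}(\OOO,J)\otimes\CC_P=J/(J\cap I^{(2)})\otimes\CC_P$ is one-dimensional, spanned by $\eta$. Hence $\mu y_4^2$ and every omitted term of $\beta$ map into $\CC\cdot\eta$, so the image of $\beta$ there equals $\lambda\eta\neq0$; in particular $\eta$ occurs in $\beta$ with nonzero coefficient and generates $\gr^{2,0}(\OOO,J)$ at $P$, which is the assertion $\eta\in\beta$.

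Next I would put $H=\{\beta=0\}$ and pass to the index-one cover. Since $H=H^\sharp/\muu_4$ with $H^\sharp=\{\alpha=\beta^\sharp=0\}\subset\CC^4_{y_1,\dots,y_4}$, and a quotient of a normal variety by a finite group is normal, it suffices to prove $H^\sharp$ normal near $P^\sharp$. Being a two-dimensional complete intersection, $H^\sharp$ is automatically $S_2$, so I only need $R_1$, i.e.\ that $\Sing H^\sharp$ is finite. Here $X^\sharp=\{\alpha=0\}$ has an isolated cDV singularity at $P^\sharp$ and $H^\sharp$ is a Cartier divisor on it; by Bertini's theorem applied to $|\OOO_X|_C$ a general member is smooth away from $C$ and $\Sing X$, so $H^\sharp$ is smooth off $C^\sharp\cup\{P^\sharp\}$. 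Along $C^\sharp\setminus\{P^\sharp\}$ I would use $\eta\in\beta$: at $q=(t,0,0,0)$ with $t\neq0$, the relation $\alpha\equiv y_1^{\ell(P)}y_j\bmod(y_2,y_3,y_4)^2$ gives $d\alpha|_q=t^{\ell(P)}\,dy_j$, while $\eta=y_1y_3$ (resp.\ $y_1^2y_4$) contributes a nonzero $dy_3$- (resp.\ $dy_4$-) component to $d\beta^\sharp|_q$. As the transverse variable of $\eta$ is not $y_j$, the two differentials are independent and $H^\sharp$ is smooth at $q$. Thus $\Sing H^\sharp\subset\{P^\sharp\}$ and $H$ is normal near $P$; normality along $C\setminus\{P\}$ and at the possible extra type \type{(III)} point follows from Bertini at the smooth points of $X$, and then $H=H^{\mathrm n}$ has only rational singularities by \xref{sde1}.

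Finally, to pass from this particular $\beta$ to a genuinely general member of $|\OOO_X|_C$ I would invoke upper semicontinuity of $\dim\Sing H_t$ over the linear system: our member has $0$-dimensional singular locus, hence so does the general one, which is therefore normal. The step I expect to be the main obstacle is the transversality along $C^\sharp$: one must be certain that the transverse variable occurring in $\eta$ is independent of the direction $dy_j$ cut out by $d\alpha$, and that none of the low-order terms of $\alpha$ or $\beta^\sharp$ (notably $y_2^2$, $y_3^2$, $y_4^2$) conspire to drop the rank of the Jacobian of $(\alpha,\beta^\sharp)$ along the whole axis. This is precisely what the hypothesis that $\gr^{2,0}(\OOO,J)\otimes\CC_P$ is generated by $\eta$ secures, by annihilating the competing $y_4^2$ direction and pinning the leading term of $\beta$ at $P$ to $\eta$ (the relevant coordinate normalization, e.g.\ $j\neq3$, being the one recorded in Lemma \xref{grnE-formulae}).
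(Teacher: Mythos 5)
Your proposal follows, in substance, the same route as the paper: the paper's entire proof is the observation that the image of $\beta$ under $\pgr^2(\OOO_E,J)\to\gr^{2,0}(\OOO,J)\hookrightarrow\gr^1_C\OOO$ is nonzero, which is exactly your first step, and your Jacobian computation on the index-one cover is just the coordinate unwinding of what ``$\beta$ has nonzero image in $\gr^1_C\OOO=I/I^{(2)}$'' means, namely generic smoothness of $H$ along $C$, followed by Serre's criterion. Two corrections are needed, one cosmetic and one real. The cosmetic one: your inference ``$\mu y_4^2$ and every omitted term of $\beta$ map into $\CC\cdot\eta$, so the image of $\beta$ equals $\lambda\eta$'' proves nothing as stated (the target \emph{is} $\CC\cdot\eta$, so everything maps there); the correct point is that $y_4^2\in I^2\subset I^{(2)}$ dies in $\gr^{2,0}(\OOO,J)=J/J\cap I^{(2)}$, while the remaining terms of $\beta$ contribute nothing after $\otimes\,\CC_P$, so the image is $\lambda\eta\neq0$ by the genericity of $\lambda$.

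The real flaw is the claim that ``normality along $C\setminus\{P\}$ and at the possible extra type \type{(III)} point follows from Bertini at the smooth points of $X$.'' This is false: $C$ is precisely the base locus of $|\OOO_X|_C$, so Bertini gives no information at points of $C$; and indeed in the cases where this lemma is applied (\xref{main-theorem-1.1.1}, \xref{main-theorem-1.1.2}) the general member $H$ \emph{is} singular at a point of $C$ away from $P$ (the type \type{A} point $R$ of \xref{pf-(7.8.2.2)}, the $(-2)$-vertex attached to the black vertex in \xref{case-up-1}). Your argument survives, but only because normality needs finiteness of $\Sing H$, not smoothness along $C\setminus\{P\}$: Bertini gives $\Sing H\subset C$, your transversality computation shows $\Sing H$ misses a punctured neighbourhood of $P$ in $C$, hence $\Sing H$ is a proper closed analytic subset of the irreducible curve $C$ and therefore finite; since $H$ is a hypersurface section of the Cohen--Macaulay threefold $X$ it is $S_2$, and Serre's criterion then gives normality, including at the isolated singular points of $H$ on $C\setminus\{P\}$ whose existence your Bertini appeal would wrongly rule out.
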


\begin{proof}
The image of $\beta$ under the homomorphism 
\[
\pgr^2(\OOO_E, J) \longrightarrow \gr^{2,0}(\OOO,J) 
\hookrightarrow \gr^1 _C\OOO
\]
in not zero.
\end{proof}

The following lemma will be used often.
\begin{lemma}\label{Lemma-fiber}
Let $(X,C)$ be a extremal curve germ of type \type{(IIA)}.
Let $\KKK\subset I$ be an $I$-primary ideal
and let $\corank(\KKK)$ be the corank of $\KKK$,
that is, the rank of $\OOO/\KKK$ at a general point of $C$.
\begin{enumerate}
\item
If $\corank(\KKK)\le 7$, then $\h^1(\omega/\omega\totimes \KKK)=0$.
\item
If $\corank(\KKK)\le 6$ and $\chi(\omega/\omega\totimes \KKK)= 0$,
then the quotient $\KKK/\Sat(I\KKK)$ has no $\ell$-direct summands
$\SSS$
of the form $(-1)$ or $(i+jP^\sharp)$ with $i\le -2$, $0\le j\le 3$.
\end{enumerate}
\end{lemma}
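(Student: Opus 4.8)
The plan is to compute $\h^1$ and $\h^0$ of $\omega/\omega\totimes\KKK$ by filtering it into $\ell$-invertible pieces on $C\simeq\PP^1$ and applying Serre duality, the corank hypotheses serving only to keep every piece in the range where its cohomology is under control.

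\emph{Part (i).} First I would dispose of the divisorial case: there $f$ is birational, $X$ has terminal singularities, so Grauert--Riemenschneider gives $R^1f_*\omega=0$, and since the fibres of $f$ are at most one-dimensional $R^2f_*=0$; feeding $0\to\omega\totimes\KKK\to\omega\to\omega/\omega\totimes\KKK\to0$ into $f_*$ then forces $\h^1(\omega/\omega\totimes\KKK)=R^1f_*(\omega/\omega\totimes\KKK)=0$ with no hypothesis on the corank. The content is therefore the $\QQ$-conic bundle case, where $R^1f_*\omega=\omega_Z\neq0$. Here I would instead filter: refining $\OOO\supset\KKK$ by the symbolic powers $I^{(n)}$ and then $\ell$-splitting the successive quotients, $\omega/\omega\totimes\KKK$ becomes an iterated extension of sheaves $\omega\totimes\SSS$, where each $\SSS$ is an $\ell$-invertible subquotient of some $\gr_C^n\OOO$. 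The number of pieces equals $\corank(\KKK)$, so $\corank(\KKK)\le7$ confines us to $\gr_C^n\OOO$ with $0\le n\le6$. By additivity of $\h^1$ it then suffices to show $\h^1(\omega\totimes\SSS)=0$ for every such $\SSS$; via Serre duality on $C$ this is $\h^0(\SSS^\vee)=0$ up to the $\ell$-canonical twist, which I would verify for $n\le6$ starting from the given $\h^1(\gr^1_C\OOO)=0$ (see \ref{notation-grn}) and tracking the weight $jP^\sharp$ at $P$ through the explicit normal form \eqref{equation-XC}--\eqref{equation-alpha}.

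\emph{Part (ii).} Since $\corank(\KKK)\le6\le7$, part (i) already gives $\h^1(\omega/\omega\totimes\KKK)=0$, and the hypothesis $\chi(\omega/\omega\totimes\KKK)=0$ then yields $\h^0(\omega/\omega\totimes\KKK)=0$ as well. Now $\KKK/\Sat(I\KKK)$ is the bottom $\ell$-graded piece, so each of its $\ell$-invertible summands $\SSS$ occurs with $\omega\totimes\SSS$ as a subquotient of $\omega/\omega\totimes\KKK$. A summand of the form $(i+jP^\sharp)$ with $i\le-2$ has $\h^1(\omega\totimes\SSS)\neq0$, and I would argue that this class cannot be cancelled in the extension, contradicting $\h^1=0$; the remaining neutral case $\SSS=(-1)$ contributes $0$ to $\h^0$, $\h^1$ and to $\chi$, so it is invisible to $\h^1$ alone and is excluded precisely by the sharp Euler-characteristic count $\chi=0$ together with the one-unit-tighter bound $\corank(\KKK)\le6$, which leaves room for its Serre-dual partner to stay inside the controllable range of part (i).

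\emph{Main obstacle.} The hard part will be the bookkeeping of the $\ell$-weights at $P$: the splitting of each $\gr_C^n\OOO$ into $\ell$-invertible summands, and the exact value $j\in\{0,1,2,3\}$ of the weight $jP^\sharp$ of each, is what decides whether the relevant Serre-dual group vanishes, and these weights must be read off from the index-one cover via \ref{(7.5)}. This is also where the precise thresholds $7$ and $6$ are pinned down, since the naive degree criterion $\deg\ge-1$ on $\PP^1$ is too coarse to see the contribution of the orbifold point.
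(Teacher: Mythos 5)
Your birational half of part (i) is fine, and it is essentially what the paper invokes through \cite[1.2.1]{Mori-1988}: Grauert--Riemenschneider plus $R^2f_*=0$ gives the vanishing with no corank hypothesis at all. The genuine gap is the $\QQ$-conic bundle case, which is the entire content of the corank bounds. Your plan there --- filter $\omega/\omega\totimes\KKK$ into $\ell$-invertible pieces and verify $\h^1(\omega\totimes\SSS)=0$ for each piece by ``tracking the weight $jP^\sharp$'' from the normal form --- cannot be carried out: the local data at $P$ determines only the weight $j$ of a piece $(i+jP^\sharp)$, while its degree $i$ is a global invariant of the germ, and there is no a priori lower bound on these degrees valid for every $I$-primary ideal of corank $\le 7$. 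Indeed, part (ii) of the lemma is precisely the assertion that certain negative pieces cannot occur; it is a \emph{conclusion} requiring proof, used contrapositively throughout \S\S 4--7 to kill cases, not something obtainable from $\h^1(\gr^1_C\OOO)=0$ by bookkeeping. What the paper actually does is geometric: if $\h^1(\omega\totimes(\OOO/\KKK))\neq 0$, then by \cite[Th.~4.4]{Mori-Prokhorov-2008} the scheme-theoretic fiber satisfies $f^{-1}(o)\subset V:=\Spec(\OOO/\KKK)$; since $V\subset 7C$ as a cycle when $\corank(\KKK)\le 7$, this yields $2=-K_X\cdot f^{-1}(o)\le -K_X\cdot V\le 7\,(-K_X\cdot C)=7/4$, a contradiction. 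The threshold $7$ is pinned down by the inequality $7/4<2$, i.e.\ by intersection numbers, not by keeping Serre duality ``in a controllable range''.

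Part (ii) has a second, structural error: $\KKK/\Sat(I\KKK)$ is \emph{not} a subquotient of $\OOO/\KKK$ (it is a subquotient of $\OOO/\Sat(I\KKK)$), so a summand $\SSS$ does not sit inside $\omega/\omega\totimes\KKK$ as you claim, and there is no class to ``cancel''. The correct move, which is the paper's, is to pass to $\LLL:=\ker(\KKK\to\SSS)$, so that $0\to\omega\totimes\SSS\to\omega\totimes(\OOO/\LLL)\to\omega\totimes(\OOO/\KKK)\to 0$ is exact and $\chi(\omega\totimes(\OOO/\LLL))=\chi(\omega\totimes(\OOO/\KKK))+\chi(\omega\totimes\SSS)<0$; since $\corank(\LLL)=\corank(\KKK)+1\le 7$, the case-(i) mechanism applied to $\LLL$ gives the contradiction --- this is exactly why (ii) carries the tighter bound $6$, not any ``Serre-dual partner'' consideration. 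Finally, your treatment of $\SSS=(-1)$ is wrong because you computed the untwisted invariants: since $\gr^0_C\omega=(-1+3P^\sharp)$ at a type \type{(IIA)} point, one has $\omega\totimes(-1)=(-2+3P^\sharp)$, with $\chi=-1$ and $\h^1\neq 0$, so $(-1)$ is excluded by exactly the same argument as the summands with $i\le -2$; it is not ``invisible to $\h^1$''. Even the salvageable skeleton of your part (ii) thus reduces to part (i) for corank-$7$ ideals in the $\QQ$-conic bundle case, which is precisely the case your proposal leaves unproved.
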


\begin{proof}
We prove (ii) ((i) is treated similarly).
Take the ideal $\LLL\subset \KKK$ such that $\KKK/\LLL=\SSS$.
Then from the exact sequence
\[
0\longrightarrow \omega\totimes\SSS
\longrightarrow \omega\totimes (\OOO /\LLL)
\longrightarrow \omega\totimes (\OOO /\KKK) \longrightarrow 0
\]
we see that $\chi(\omega\totimes (\OOO /\LLL)) =
\chi( \omega \totimes (\OOO/\KKK)) +\chi( \omega\totimes\SSS)<0$.
If $f$ is birational, we get a contradiction by \cite[1.2.1]{Mori-1988}.
Assume that $f$ is a $\QQ$-conic bundle.
Let $V:=\Spec(\OOO/\LLL)$. By \cite[Th. 4.4]{Mori-Prokhorov-2008}
there is an inclusion $f^{-1}(o)\subset V$.
Since $V\subset 7C$ (as a cycle), we have
\[
2=-K_X\cdot f^{-1}(o)\le -K_X\cdot V\le -7K_X\cdot C=7/4
\]
\cite[Lemma 2.8]{Mori-Prokhorov-2008}, a contradiction.
\end{proof}

\begin{lemma}\label{label-surfaces-C}
Let $S$ be a normal surface and let $C\subset S$ be a
smooth proper curve such that $K_S\cdot C+C^2<0$.
Let $P_1,\dots, P_l\in S$ be all the singular points lying on $C$.
Then $l\le 3$. Moreover, if $l=3$, then the pair $(S,C)$ is plt.
If $l=2$, then $(S,C)$ is plt at least at one of the points $P_1$ or $P_2$.
\end{lemma}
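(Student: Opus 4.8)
The plan is to read off everything from adjunction for the pair $(S,C)$ together with the lower bound for the different at a singular point. Since $C$ is a smooth proper curve in the normal surface $S$, the adjunction formula reads
\[
(K_S+C)\cdot C \;=\; \deg K_C+\deg\Diff_C(0)\;=\;2g(C)-2+\sum_{i=1}^{l}d_i,
\]
where $\Diff_C(0)=\sum_i d_iP_i$ is the different, an effective $\QQ$-divisor whose support is exactly $\Sing(S)\cap C=\{P_1,\dots,P_l\}$. I would then invoke two standard facts from the theory of the different on surfaces: first, that at each singular point one has $d_i\ge 1/2$; and second, that $(S,C)$ is plt at $P_i$ if and only if $d_i<1$, by inversion of adjunction, the pair $(C,\Diff_C(0))$ being klt precisely when all its coefficients are $<1$.

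Next I would feed in the hypothesis. As $K_S\cdot C+C^2=(K_S+C)\cdot C<0$ and $g(C)\ge 0$ (in fact the inequality forces $g(C)=0$), the adjunction formula gives
\[
\sum_{i=1}^{l}d_i=(K_S+C)\cdot C+2-2g(C)<2.
\]
Combining this with $d_i\ge 1/2$ yields $l/2\le\sum_i d_i<2$, and hence $l\le 3$.

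The two plt-assertions are then pure arithmetic. If $l=3$ and some $d_i\ge 1$, then $\sum_i d_i\ge 1+1/2+1/2=2$, contradicting $\sum_i d_i<2$; so every $d_i<1$ and, by the criterion above, $(S,C)$ is plt at each $P_i$, hence plt everywhere, since it is plt (indeed snc) away from the $P_i$. Likewise, if $l=2$ and both $d_1,d_2\ge 1$, then $d_1+d_2\ge 2$, again impossible; so at least one coefficient is $<1$ and $(S,C)$ is plt at the corresponding point. There is no genuinely hard step here: the entire argument rests on the adjunction formula, the lower bound $d_i\ge 1/2$ at a singular point, and the equivalence "$d_i<1\Leftrightarrow$ plt at $P_i$", which are the substantive inputs to verify; once they are in place, the bound on $l$ and the plt conclusions follow immediately by counting.
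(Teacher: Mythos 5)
Your proposal is correct and follows essentially the same route as the paper: adjunction for $(K_S+C)|_C$ via the different, the bound $\deg\Diff_C(0)<2$, the lower bound $\delta_i\ge 1/2$ at singular points, and the criterion that $(S,C)$ is plt at $P_i$ exactly when $\delta_i<1$ (the paper cites \cite[ch.~16, Th.~17.6]{Utah} for these facts rather than invoking inversion of adjunction by name). The only difference is cosmetic: you note in passing that the hypothesis forces $g(C)=0$, which the paper does not need to state.
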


\begin{proof}
Write
\[
(K_S+C)|_C=K_C+\Diff_C(0),\qquad\Diff_C(0)=\sum\delta_i P_i,
\]
where $\Diff_C(0)$ is the \textit{different}, a naturally defined effective $\QQ$-divisor
on $C$ \cite[ch. 16]{Utah}. Then
\[
\sum \delta_i=\deg\Diff_C(0)=2-2p_a(C)+K_S\cdot C+C^2< 2.
\]
Here $\delta_i\ge 1/2$, and $\delta_i< 1$ if and only if $(S,C)$ is plt at $P_i$
\cite[ch. 16, Th. 17.6]{Utah}.
This immediately implies that $l\le 3$. If $l=3$, then $\delta_i<1$ and so $(S,C)$ is plt.
If $l=2$, then either $\delta_1<1$ or $\delta_2<1$.
\end{proof}

\begin{sremark}
Recall that if in the above notation $(S,C)$ is plt at $P_i$, then
$S\ni P_i$ is a cyclic quotient singularity
(see e.g. \cite[ch. 3]{Utah}).
Suppose that $S$ is embedded to a terminal threefold $X$ so that
either $S$ is a Cartier divisor or $S\sim -K_X$.
If $(S,C)$ is plt at $P_i$, then the point $P_i\in X$ is of type \type{cA/n}
(cf. \cite[6.4B]{Reid-YPG1987}, \cite[Prop. 16.17]{Utah}).
\end{sremark}

\section{Case $\ell(P)=1$ and $\mathrm{Sing}(X)=\{P\}$.}\label{section-lP=1-Sing=P}
\begin{de}
In this section additionally to \ref{Set-up} we assume that $\ell(P)=1$ and $\Sing(X)= \{P\}$.
Then by \ref{equation-iP} and \eqref{equation-alpha} we have $i_P(1)=1$,
$\deg\gr_C^1\OOO=0$, and
\[
\alpha\equiv y_1y_2\mod (y_2, y_3, y_4)^2.
\]
In particular, $\h^0(\gr_C^1\OOO)\neq 0$.
\end{de}

\begin{lemma}\label{label-simple-case-1}
$\gr_C^1\OOO\not \simeq\OOO\oplus\OOO$.
\end{lemma}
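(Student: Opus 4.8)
The plan is to first restrict the isomorphism type of $\gr^1_C\OOO$ to just two candidates and then eliminate the split one by a curve‑on‑a‑surface argument applied to the general member $H$.

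First I would reduce to two cases. By \ref{notation-grn} we have $\h^1(\gr^1_C\OOO)=0$, and in the present situation it was computed that $\deg\gr^1_C\OOO=0$. Since $C\simeq\PP^1$, the rank‑$2$ sheaf $\gr^1_C\OOO$ splits as $\OOO(a)\oplus\OOO(b)$ with $a+b=0$; the vanishing $\h^1=0$ forces $a,b\ge-1$, so the only possibilities are $\OOO\oplus\OOO$ and $\OOO(1)\oplus\OOO(-1)$. Suppose, for contradiction, that $\gr^1_C\OOO\simeq\OOO\oplus\OOO$.

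Next I would translate this into the geometry of $H$. Under that assumption $\gr^1_C\OOO$ is globally generated, so using the surjection $\h^0(\OOO_X)\twoheadrightarrow\gr^1_C\OOO\totimes\CC_Q$ furnished by \ref{lemma-extend-sections-D} (which controls exactly the linear normal part), a general $H\in|\OOO_X|_C$ has image $\bar h$ in $\gr^1_C\OOO=I/I^{(2)}$ that is nowhere vanishing on $C$. Hence $H$ is smooth along $C$ at every point where $X$ is smooth; since $\Sing X=\{P\}$ by the hypothesis of this section, the normal surface $H$ has $P$ as its only singular point on $C$, i.e.\ $l=1$ in the notation of \ref{label-surfaces-C}. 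Then I would set up the different identity: because $H\in|\OOO_X|_C$ is a principal divisor, $K_H=K_X|_H$ and so $K_H\cdot C=K_X\cdot C=-1/4$ (recall $-K_X\cdot C=1/4$, as in the proof of \ref{Lemma-fiber}). Writing $(K_H+C)|_C=K_C+\Diff_C(0)$ with $\Diff_C(0)=\sum\delta_iP_i$ as in \ref{label-surfaces-C}, the assumption $l=1$ gives $\delta_P=\deg\Diff_C(0)=2+K_H\cdot C+C^2=\tfrac74+C^2$. On the other hand $P\in X$ is of type \type{cAx/4}, which is not of type \type{cA/n}, so by the remark following \ref{label-surfaces-C} the pair $(H,C)$ cannot be plt at $P$; hence $\delta_P\ge1$.

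Finally comes the step I expect to be the main obstacle: computing the singularity $(H,P)$ explicitly from the normal form \ref{(7.5)} with $\ell(P)=1$ (so $\alpha\equiv y_1y_2$ and $y_3^2\in\alpha$ by \eqref{equation-alpha}), namely $H^\sharp=\{h=0\}\subset X^\sharp$ for a general invariant $h\in(y_2,y_3,y_4)$, and reading off its minimal resolution graph together with the exact value of $\delta_P$. The expectation is that this computed $\delta_P$ is strictly smaller than the value $\tfrac74+C^2$ forced by $l=1$ — equivalently, that the resolution graph of $(H,P)$ alone cannot be completed by attaching only $C$ to a graph contractible to a smooth point or a Du Val point of type \type{A} or \type{D} (as required by \ref{sde1}) without producing a second singular point $Q_0\in C\smallsetminus\{P\}$. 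Since $\Sing X=\{P\}$, such a $Q_0$ can arise only where $\bar h$ vanishes, i.e.\ only when $\gr^1_C\OOO\simeq\OOO(1)\oplus\OOO(-1)$, contradicting the standing assumption. The delicate point is precisely this explicit local resolution at $P$ and the verification that $l=1$ is numerically incompatible with the type \type{(IIA)} structure.
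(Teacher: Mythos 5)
There is a genuine gap, and it sits exactly where you placed your ``main obstacle'': the decisive step of your argument is never carried out, only conjectured. Everything you actually prove is consistent with the assumption $\gr_C^1\OOO\simeq\OOO\oplus\OOO$ and yields no contradiction by itself. Indeed, your different computation gives $\delta_P=\frac74+C^2$ together with $\delta_P\ge 1$ (non-plt at the \type{cAx/4} point), and in the $\QQ$-conic bundle case one has $C^2=0$, so $\delta_P=\frac74$, which is perfectly compatible with both constraints; in the divisorial case it only bounds $C^2\in[-\frac34,0)$. So the entire burden falls on the deferred step: writing down the general $\beta\in\h^0(I)$ at $P$, resolving $(H,P)$, and showing the resulting graph plus the single curve $C$ cannot be contracted as \xref{sde1} requires. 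That computation is not routine --- it is of the same scale as the paper's appendix (Computations \xref{computation-lP=3a-new1}, \xref{computation-lP=4}), and before it can even start you must pin down the $\ell$-structure: from $\ell(P)=1$ one gets $y_2\in I^{(2)}$, so $y_3,y_4$ form an $\ell$-free $\ell$-basis and $\gr_C^1\OOO=(P^\sharp)\toplus(2P^\sharp)$; only this tells you which monomials ($y_1y_3$, $y_1^2y_4$, plus $y_4^2$, $y_2y_3$) occur in $\beta$ with independent general coefficients. Moreover, flipping germs with exactly this local structure do exist (this is part of \cite[ch.~7]{Kollar-Mori-1992}), so no purely local analysis of $(H,P)$ can succeed: any correct completion must use the global contractibility constraint in an essential, case-by-case way --- in effect you would be re-proving the relevant case of \cite[7.2.4]{Kollar-Mori-1992} from scratch. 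The paper avoids all of this: it observes the $\ell$-isomorphism $\gr_C^1\OOO=(P^\sharp)\toplus(2P^\sharp)$ and then quotes \cite[7.2.1, 7.2.4]{Kollar-Mori-1992}, which say that such a germ is flipping, contradicting the standing non-flipping assumption of \xref{Set-up}.

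A secondary, fixable point: Lemma \xref{lemma-extend-sections-D} does not give what you use it for. It controls the coefficients of $y_4^2$ and $y_2y_3$ in $\beta$ at the point $P$ (via $\OOO_D$ modulo $(y_2,y_3,y_4)^3$); it says nothing about generating $\gr_C^1\OOO\totimes\CC_Q$ at points $Q\neq P$ of $C$. To conclude that a general $H$ is smooth along $C\setminus\{P\}$ you need the image of $\h^0(I)\to\h^0(\gr_C^1\OOO)$ to generate the (globally generated) sheaf $\gr_C^1\OOO$, which requires a vanishing/lifting argument of a different kind --- standard in this framework, but not the lemma you cite.
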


\begin{proof}
In this case, $y_2\in I^{(2)}$ and
the elements $y_3$, $y_4$ form an $\ell$-free $\ell$-basis of $\gr_C^1\OOO$.
So, after possible change of coordinates,
we have an $\ell$-isomorphism
\[
\gr_C^1\OOO= (P^\sharp)\toplus (2P^\sharp),
\]
where $y_3$ (resp. $y_4$) is an $\ell$-free $\ell$-basis of $(P^\sharp)$
(resp. $(2P^\sharp)$) at $P$.
Thus the conditions of \cite[7.2.1]{Kollar-Mori-1992} are satisfied.
By \cite[7.2.4]{Kollar-Mori-1992} the germ $(X,C)$ is flipping.
This contradicts our assumption in \ref{Set-up}.
\end{proof}

\begin{de}\label{(7.8)}
Thus $\gr_C^1\OOO\not\simeq\OOO\oplus\OOO$.
Since $\h^1(\gr_C^1\OOO)=0$, we have
$\gr^1_C\OOO\simeq\OOO(1)\oplus\OOO(-1)$ (as a sheaf).
So, as in \ref{label-simple-case-1},
the elements $y_3$, $y_4$ form an $\ell$-free $\ell$-basis of $\gr_C^1\OOO$.
We have two possibilities:
\begin{equation}
\label{equation-possibilities-1-gr1COOO}
\gr^1_C\OOO=
\left\{
\begin{array}{lll@{\hbox{\hspace{70pt}}}l}
(1+2P^\sharp)&\toplus&(-1+P^\sharp),
&\flabel{equation-possibilities-1-gr1COOO}{a}
\\
(1+P^\sharp)&\toplus&(-1+2P^\sharp).
&\flabel{equation-possibilities-1-gr1COOO}{b}
\end{array}
\right.
\end{equation}
\end{de}

\begin{de}\label{Subcase(7.8.1)} {\bf Case \fref{equation-possibilities-1-gr1COOO}{a}.}
So we assume that there is an $\ell$-isomorphism
\begin{equation}\label{equation(7.8.1.1)}
\gr^1_C\OOO= (1+2P^\sharp)\toplus(-1+P^\sharp)
\end{equation}
with $\ell$-free $\ell$-bases
$y_4$ and $y_3$ of
$(1+2P^\sharp)$ and $(-1+P^\sharp)$, respectively.
Let $J$ be the ideal such that $I\supset J\supset I^{(2)}$ and
$J/I^{(2)}=(1+2P^\sharp)$.
Then $J^\sharp=(y_4,y_2,y^2_3)$. Since $y^2_3$ must
appear in $\alpha$ by the
description of \type{(IIA)} points, we may assume that 
\[
\alpha\equiv y^2_3+y_1y_2\mod I^\sharp J^\sharp
\]
by replacing $y_3$ with $\lambda y_3$ $(\lambda\in\CC{^*}$). Thus $(y_3,y_4,y_2)$ is a
$(1,2,2)$-monomializing $\ell$-basis of $I\supset J$ at $P$ of the
second kind and
\begin{equation}\label{equation(7.8.1.1)complete-intersection}
J^\sharp=(y_2,y_4).
\end{equation}
In particular, $J^\sharp$ is a local complete intersection.
Consider the exact sequence \eqref{equation-exact-sequence}.
By \cite[ (8.10)]{Mori-1988}, we see $\ell$-isomorphisms
\begin{eqnarray}
\label{eqnarray-new-1}
\gr^1(\OOO,J)&=& (-1+P^\sharp),
\quad
\gr^{2,0}(\OOO,J)= (1+2P^\sharp),
\\
\qquad\gr^{2,1}(\OOO,J)&\simeq&
\gr^1(\OOO,J)^{\totimes 2}\totimes (P^\sharp)
= (-2+3P^\sharp).
\end{eqnarray}
Since $\deg\gr_C^1\OOO=0$ and $\h^1(\gr_C^1\OOO)=0$, we have
an isomorphism of sheaves
\[
\gr^2(\OOO,J)\simeq
\OOO_{\PP^1}(-1)\oplus\OOO_{\PP^1}
\quad\text{or}\quad
\OOO_{\PP^1}(-2)\oplus\OOO_{\PP^1}(1).
\]
Hence, we have the following possibilities for the $\ell$-structure:

\begin{equation}
\label{equation-possibilities-(7.8.1.1)-a}
\gr^2(\OOO,J)=
\left\{\begin{array}{lll@{\hbox{\hspace{30pt}}}l}
(2P^\sharp)&\toplus& (-1+3P^\sharp),&\flabel{equation-possibilities-(7.8.1.1)-a}{a}
\\
(3P^\sharp)&\toplus& (-1+2P^\sharp),&\flabel{equation-possibilities-(7.8.1.1)-a}{b}
\\
(1+2P^\sharp)&\toplus& (-2+3P^\sharp),&\flabel{equation-possibilities-(7.8.1.1)-a}{c}
\\
(1+3P^\sharp)&\toplus& (-2+2P^\sharp).&\flabel{equation-possibilities-(7.8.1.1)-a}{d}
\end{array} \right.
\end{equation}
We consider these cases below in \ref{case-up-1}, \ref{case-up-1b}, \ref{Lemma-possibilities-ac}.

\begin{sde}{\bf Subcase \fref{equation-possibilities-(7.8.1.1)-a}{a}.}
\label{case-up-1}
Thus $\gr^2(\OOO,J)= (2P^\sharp)\toplus (-1+3P^\sharp)$
and $\gr^1_C\OOO= (1+2P^\sharp)\toplus(-1+P^\sharp)$.
We apply Lemma \ref{H-is-normal} with $b=1$, $\eta=y_1^2y_4$
and obtain a global section $\beta_1\in \h^0(J)$ 
with $y_1^2y_4\in \beta_1$.
Moreover, $H$ is normal.
Also by Lemma \ref{lemma-extend-sections-D} there are
global sections $\beta_2,\, \beta_3\in \h^{0}(\OOO_X)$ such that
$\beta_2\equiv y^2_4$, $\beta_3\equiv y_2y_3\mod (y_1)$.
Now we study $(H,P)$. The equation of $H$
satisfies \cite[(7.7.2)]{Kollar-Mori-1992},
because of the sections $\beta_1$, $\beta_2$ and $\beta_3$.
Hence $\Delta(H,C)$ near $P$ has the form
\begin{equation*}
\xy
\xymatrix@R=1pt@C=17pt{
&&\overset3{\circ}\ar@{-}[d]
\\
\bullet\ar@{-}[r]&\underset3\circ\ar@{-}[r]
&\circ\ar@{-}[r]&{\circ}\ar@{-}[r]&\underset4\circ
}
\endxy
\end{equation*}
By Lemma \ref{label-surfaces-C} there is at most
one singular point outside of $P$ and
this point is of type \type{A}.
On the other hand, since $H$ must be contractible either to a Du Val point or to a curve
(see \ref{sde1}), we can attach to the black vertex
exactly one vertex corresponding to a $(-2)$-curve.
So we get \ref{main-theorem-1.1.2}. This
completes \ref{case-up-1}.
\end{sde}

\begin{slemma}\label{case-up-1b}
The subcase \fref{equation-possibilities-(7.8.1.1)-a}{b} does not occur.
\end{slemma}

\begin{proof}
Assume that
\[
\gr^1_C\OOO= (1+2P^\sharp)\toplus(-1+P^\sharp),\quad
\gr^2(\OOO,J)= (3P^\sharp)\toplus (-1+2P^\sharp).
\]
We can choose $\ell$-free $\ell$-bases of
$(3P^\sharp)$ and $(-1+2P^\sharp)$ in the form
$y_2+\cdots$ and $y_4$, respectively.
Recall that $\gr^1(\OOO,J)=I/\Sat_{\OOO}(J)= (-1+P^\sharp)$.
By \ref{equation(7.8.1.1)complete-intersection}
we also have the following $\ell$-isomorphisms
\[
\begin{array}{llll}
\gr^3(\OOO,J)\simeq
\quad\gr^1(\OOO,J)\totimes \gr_C^2(\OOO,J)&=& (0) \toplus (-2+3P^\sharp),
\\[5pt]
\gr^4(\OOO,J)\simeq
\quad\tilde S^2\gr^2(\OOO,J)&=& (1+2P^\sharp) \toplus (P^\sharp)\toplus (-1).
\end{array}
\]
{}From this, one can see that
$\chi(\gr^n(\omega,J))=0$,\, $1$,\, $-1$ for $n=1$,\, $2$, $3$,
respectively.
Now, using exact sequences
\begin{equation}\label{equation-exact-sequence-F-omega}
0\longrightarrow \gr^n(\omega,J)\longrightarrow \omega /F^{n+1}(\omega,J)
\longrightarrow \omega/ F^n(\omega,J)\longrightarrow 0,
\end{equation}
we get $\chi (\omega/ F^n(\omega,J))=0$, $0$, $1$, $0$
for $n=1$, $2$, $3$, $4$, respectively.
Now we apply Lemma \ref{Lemma-fiber}(ii) with $\KKK=F^4(\OOO,J)$
of corank $6$
and get a contradiction.
\end{proof}

\begin{slemma}\label{Lemma-possibilities-ac}
Subcases \fref{equation-possibilities-(7.8.1.1)-a}{c}
and \fref{equation-possibilities-(7.8.1.1)-a}{d} do not occur.
\end{slemma}

\begin{proof}
Apply Lemma \ref{Lemma-fiber}(ii) with $\KKK=J$ using
\eqref{eqnarray-new-1}.
\end{proof}
This completes our treatment of all the
possibilities in \eqref{equation-possibilities-(7.8.1.1)-a}.
\end{de}

\begin{de}\label{Case(7.8.2)} {\bf Case \fref{equation-possibilities-1-gr1COOO}{b}.}
We have an $\ell$-isomorphism
\begin{equation}\label{equation-(7.3.1.1)}
\gr^1_C\OOO= (1+P^\sharp)\toplus(-1+2P^\sharp)
\end{equation}
with $\ell$-free $\ell$-bases $y_3$ and $y_4$ of
$(1+P^\sharp)$ and $(-1+2P^\sharp)$, respectively.
Let $J$ be the laminal ideal of width $2$ with $J/I^{(2)}=(1+P^\sharp)$.
We also have $\ell$-isomorphisms
\[
\gr^1(\OOO,J)=
(-1+2P^\sharp),\qquad\gr^{2,0}(\OOO,J)= (1+P^\sharp).
\]
Note that $y^2_4$ does not
appear in $\alpha$ because $\wt y^2_4\not\equiv\wt\alpha$.
Since $I^\sharp=(y_2,y_3,y_4)$ and $J^\sharp=(y_2,y_3,y^2_4)$,
we may further assume $\alpha\equiv y_1y_2\mod I^\sharp J^\sharp$ after
changing coordinates $y_2 \longmapsto y_2+\lambda y_1y_4^2$.
Hence, $y_2\in F^3(\OOO,J)$ and we have an
$\ell$-isomorphism
\[
\gr^{2,1}(\OOO,J)\simeq\gr^1(\OOO,J)^{\totimes 2}= (-1),
\]
where $(-1)$ has an $\ell$-free $\ell$-basis $y^2_4$
and $\gr^{2,0}(\OOO,J)= (1+P^\sharp)$.
Further, we have
\begin{eqnarray*}
\chi(\omega/\omega\totimes J)&=&
\chi(\gr^0(\omega,J))+\chi(\gr^1(\omega,J))
\\
&=&
\chi(\gr(\OOO,J)\totimes\omega)+\chi(\gr(\OOO,J)\totimes\omega)
\\
&=&\chi(-1+3P^\sharp)+\chi(-1+P^\sharp)=0.
\end{eqnarray*}
Then using \eqref{equation-exact-sequence}
and Lemma \ref{Lemma-fiber}(ii)
we get the following possibilities:
\begin{equation}\label{equation-division-cases-(7.8.2.2)}
\gr^2(\OOO,J)=
\left\{\begin{array}{lll@{\hbox{\hspace{70pt}}}l}
(P^\sharp)&\toplus& (0),&\flabel{equation-division-cases-(7.8.2.2)}{a}
\\
(1)&\toplus& (-1+P^\sharp).&\flabel{equation-division-cases-(7.8.2.2)}{b}
\end{array} \right.
\end{equation}
In the situation \fref{equation-division-cases-(7.8.2.2)}{a},
by \cite[(7.3.4)]{Kollar-Mori-1992} the germ $(X,C)$ is flipping.
Below we consider the possibility \fref{equation-division-cases-(7.8.2.2)}{b}.

\begin{sde}{\bf Subcase \fref{equation-division-cases-(7.8.2.2)}{b}.}
\label{pf-(7.8.2.2)}
Then
\[
\gr^1_C\OOO= (1+P^\sharp)\toplus(-1+2P^\sharp),\quad
\gr^2(\OOO,J)= (1)\toplus (-1+P^\sharp).
\]
Let $D\in |-K_X|$ be as in \ref{ge}.
Because of \eqref{equation-exact-sequence}, the
$\ell$-summand $(1)\subset \gr^2(\OOO,J)$ is generated by an element $u\in\OOO_{X,P}$ such
that $u\equiv y^2_4+y_3(y_1+\cdots) \mod F^3(\OOO,J)$ after replacing $y_3$ with
$\lambda y_3$ for some $\lambda\in\CC{^*}$.
By Lemma \ref{lemma-extend-sections-D}
there is a section $\beta_1\in \h^{0}(\OOO_X)$ such that
$\beta_1\equiv u\mod (y_1)+(y_2,\, y_3,\, y_4)^3$. Since $\beta_1(P)=0$, we see
$\beta_1\in \h^{0}(I)\simeq \h^{0}(J)$ by
$\h^{0}(\gr^1(\OOO,J))=0$.
Thus $\beta_1$ induces a
section $(\unit)\cdot u$ of $(1)\subset \gr^2(\OOO,J)$ at $P$.
In particular, $y_1y_3$ appears in $\beta_1$.
By Lemma \ref{lemma-extend-sections-D} terms $y^2_4$ and $y_2y_3$
also appear in the equation of general $H\in |\OOO_X|_C$.
Let $\beta\in \h^{0}(I)$ be a general section.
Then, by the above, $\beta\in \h^{0}(J)$
and the induced section $\overline{\beta}$ of $\gr^2(\OOO,J)$ is a basis
of the $\ell$-summand $(1)\subset \gr^2(\OOO,J)$. Thus its image in $\gr^1_C\OOO$ has
exactly one simple zero outside of $P$. Hence $H=\{\beta=0\}\in |\OOO_X|_C$ has
exactly one singular point outside of $P$, say $R$.
As for $(H,P)$, we can apply \cite[(7.7.1)]{Kollar-Mori-1992} by the above.
So, $\Delta(H,C)$ near $P$ has the form
\begin{equation*}
\xy
\xymatrix@R=2pt@C=25pt{
&\circ\ar@{-}[d]
\\
\underset4\circ\ar@{-}[r]
&\circ\ar@{-}[r]&\underset4{\circ}\ar@{-}[r]&\bullet
}
\endxy
\end{equation*}
By Lemma \ref{label-surfaces-C} the point
$H\ni R$ is of type \type{A}.
Since $C$ is either contractible to a Du Val point or
it is a fiber of a rational curve fibration (see \ref{sde1}), it is easy to see
that $H\ni R$ is an \type{A_2} point and $\Delta (H, C)$ is as in \ref{main-theorem-1.1.1}.
This completes \ref{pf-(7.8.2.2)} and our treatment of \ref{Case(7.8.2)}.
\end{sde}
\end{de}

\section{Case $\ell(P)=1$ and $\mathrm{Sing}(X)\neq \{P\}$.}\label{section-lP=1-Sing-ne=P}
\begin{de}\label{(7.8)a}
In this section we assume that $\ell(P)=1$
and $\Sing (X)\neq \{ P\}$.
Then $X$ has exactly
one singular point outside of $P$, which is a type \type{(III)} point, say $R$
\cite[6.2]{Mori-1988}, \cite[9.1]{Mori-Prokhorov-2008}. Moreover, $i_{R}(1)=1$
(otherwise we can apply
deformations \cite[4.7]{Mori-1988} to get an extremal curve germ having
a type \type{(IIA)} point and more than one
type \type{(III)} points which is impossible).
By \cite[2.3.2]{Mori-1988} we have
$\deg\gr_C^1\OOO = -i_P(1)=-1$.
Since $\h^1(\gr_C^1\OOO)=0$, there exists
an isomorphism of sheaves
$\gr^1_C\OOO\simeq\OOO\oplus\OOO(-1)$.
In particular, $\h^0(\gr_C^1\OOO)\neq 0$.
As in \ref{label-simple-case-1} and \ref{(7.8)} we see that
the elements $y_3$, $y_4$ form an $\ell$-free $\ell$-basis of $\gr_C^1\OOO$.
So,
\begin{equation}\label{cases-gr1COOO-S4}
\gr^1_C\OOO=
\left\{
\begin{array}{lll@{\hbox{\hspace{70pt}}}l}
(P^\sharp)&\toplus&(-1+2P^\sharp),&\flabel{cases-gr1COOO-S4}{a}
\\
(2P^\sharp)&\toplus&(-1+P^\sharp).&\flabel{cases-gr1COOO-S4}{b}
\end{array}
\right.
\end{equation}
For $R$, we write
\[
(X,R)=\{\gamma=0\}\subset\CC^4_{z_1,\dots,z_4}\ \supset\ (C,R)=\{z_1\text{-axis}\},
\]
where $\gamma=\gamma(z_1,\dots,z_4)$ is such that $\gamma\equiv z_1z_2\mod (z_2,z_3,z_4)^2$.
\end{de}

\begin{de}{\bf Case \fref{cases-gr1COOO-S4}{a}.}\label{case-(7.3.1.2)a}
Thus
\begin{equation}\label{equation-(7.3.1.1)a}
\gr^1_C\OOO= (P^\sharp)\toplus(-1+2P^\sharp).
\end{equation}
Let $J$ be the laminal ideal of width $2$ with $J/I^{(2)}=(P^\sharp)$
in the above decomposition.
Without loss of generality, in the decomposition \eqref{equation-(7.3.1.1)a},
we may assume that $y_3$ and $y_4$
form $\ell$-free $\ell$-bases of $(P^\sharp)$
and $(-1+2P^\sharp)$ at $P$ and
$z_3$ and $z_4$ form free bases of $(P^\sharp)$
and $(-1+2P^\sharp)$ at $R$.
If there is an $\ell$-isomorphism
\begin{equation}\label{equation-(7.3.1.2)a}
\gr^2(\OOO,J)= (P^\sharp)\toplus(0),
\end{equation}
then we can apply \cite[7.3.4]{Kollar-Mori-1992}
and see that the germ $(X,C)$ is flipping, a contradiction.
Thus we assume that
$\gr^2(\OOO,J)\neq (P^\sharp)\toplus(0)$.
Note that $J$ satisfies
\[
\gr^1(\OOO,J)= (-1+2P^\sharp),\qquad
\gr^{2,0}(\OOO,J)= (P^\sharp).
\]
We also have the equality at $P$:
\[
\gr^{2,1}(\OOO,J)=\gr^1(\OOO,J)^{\totimes 2}
\]
which is proved by the same argument as
the one after \eqref{equation-(7.3.1.1)}.
If $z^2_4$ appears (resp. does not appear)
in $\gamma$, then we have the equality at $R$:
\[
\gr^{2,1}(\OOO,J)=\gr^1(\OOO,J)^{\totimes 2}(R)\qquad
\text{(resp. $\gr^1(\OOO,J)^{\totimes 2}$). }
\]

First we assume that $z^2_4$ appears in $\gamma$. In this case, we have
$\ell$-isomorphisms $\gr^{2,1}(\OOO,J)= (0)$,
$\gr^{2,0}(\OOO,J)= (P^\sharp)$,
and the $\ell$-exact sequence
\eqref{equation-exact-sequence}
is $\ell$-split. Whence we get \eqref{equation-(7.3.1.2)a}, a contradiction.

Hence $z^2_4$ does not
appear in $\gamma$, that is, $\gr^{2,1}(\OOO,J)=\gr^1(\OOO,J)^{\totimes 2}$ at $R$. In this case,
we have an $\ell$-isomorphism $\gr^{2,1}(\OOO,J)= (-1)$.
If $\gr^2(\OOO,J)= (P^\sharp)\toplus(-1)$, then
we apply Lemma \ref{Lemma-fiber}(ii) with $\KKK=J$ of corank $2$
and get a contradiction.
Therefore,
\begin{equation*}
\gr^2(\OOO,J)= (0)\toplus (-1+P^\sharp).
\end{equation*}
\begin{sde}
\label{case-(7.3.1.2)a-new}
Similarly to \ref{pf-(7.8.2.2)} we can prove that a general section $\beta\in \h^{0}(I)$ defines a
surface $H$ which is smooth outside of $\{P,R\}$ and the term $y_1y_3$
appears in $\beta$ at $P$.
By Lemma \ref{lemma-extend-sections-D} terms $y^2_4$ and $y_2y_3$
also appear in $\beta$ at $P$.
Thus we can apply \cite[7.7.1]{Kollar-Mori-1992}.
By Lemma \ref{label-surfaces-C} the point
$(H,R)$ is of type \type{A}.
The rest of the arguments are the same
as \ref{pf-(7.8.2.2)}.
We get \ref{main-theorem-1.1.1}.
This completes our treatment of \ref{case-(7.3.1.2)a}.
\end{sde}
\end{de}

\begin{de}{\bf Case \fref{cases-gr1COOO-S4}{b}.}
Thus
\[
\gr^1_C\OOO= (2P^\sharp)\toplus (-1+P^\sharp),
\]
where $y_3$ and $y_4$ form $\ell$-free $\ell$-bases of $(-1+P^\sharp)$ and $(2P^\sharp)$ at $P$.

Let $J$ be the ideal such that $I\supset J\supset I^{(2)}$ and
$J/I^{(2)}=(2P^\sharp)$.
Then $J^\sharp=(y_4,\, y_2,\, y^2_3)$. Since $y^2_3$ must
appear in $\alpha$ by the
description of \type{(IIA)} points, we may assume
\[
\alpha\equiv y^2_3+y_1y_2\mod I^\sharp J^\sharp.
\]
Thus $(y_3,\, y_4,\, y_2)$ is a
$(1,\, 2,\, 2)$-monomializing $\ell$-basis of $I\supset J$ at $P$ of the
second kind \cite[8.11]{Mori-1988}.
By changing coordinates $z_1, z_2$ at $R$, we can further
assume that
$\gamma=z_1z_2-\phi(z_3,z_4)$ with $\phi\in (z_3,z_4)^2$
so that
$z_3$ and $z_4$ form at $R$
free bases of $(-1+P^\sharp)$ and $(2P^\sharp)$ given above, respectively.
Then
$(z_3,z_4,z_2)$ is $(1,2,b)$-monomializing $\ell$-basis of the second kind,
where $b=\ord_{(1,2)}(\phi) \ge 2$.
Now \cite[(8.12)(ii)]{Mori-1988} applies to our case with
$d=2$, $t=s=2$, $s'=0$, $P_1=P$, $P_2=R$, $b_1=2$, $b_2=b$,
$\MMM=\gr^1(\OOO, J)=(-1+P^\sharp)$,
$\LLL=\gr^{2,0}(\OOO,J)=(2P^\sharp)$,
$\DDD_1 = (P^\sharp)$, $\DDD_2 = (R)=(1)$. We obtain
\begin{equation*}
\begin{array}{llll}
\gr^{2,0}(\OOO,J)=2P^\sharp, &&
\gr^{2,1}(\OOO,J)
=-2+ \lfloor{2/b} \rfloor+3P^\sharp,
\\[4pt]
\gr^{3,0}(\OOO,J)
=(-1+3P^\sharp), &&
\gr^{3,1}(\OOO,J)
=-2+\lfloor{3/b} \rfloor.
\end{array}
\end{equation*}

By the $\ell$-exact sequence
\eqref{equation-exact-sequence}
and $\h^1(\gr^2(\OOO,J))=0$, we have one of the following
possibilities for the $\ell$-structure:
\begin{equation}\label{equation-possibilities-(7.8.1.1)}
\gr^2(\OOO,J)=
\left\{
\begin{array}{ll@{\hbox{\hspace{9pt}}}l}
(-1+2P^\sharp)\toplus (-1+3P^\sharp) &\text{if $b \ge 3$},& \flabel{equation-possibilities-(7.8.1.1)}{a}
\\
(2P^\sharp)\toplus (-1+3P^\sharp) &\text{if $b=2$}. & \flabel{equation-possibilities-(7.8.1.1)}{b}
\end{array} \right.
\end{equation}

\begin{slemma}
The subcase \fref{equation-possibilities-(7.8.1.1)}{a} does not occur.
\end{slemma}

\begin{proof}
Note that $F^3(\OOO,J)$ is of corank 4.
Since $b \ge 3$, we see
\[
\chi(\omega/F^3(\omega,J))=
\sum_{i=0,1} \chi(\gr^i(\omega,J))+\sum_{i=0,1}
\chi(\gr^{2,i}(\omega,J))=0.
\]
Thus we see that $\gr^3(\OOO,J) \simeq \OOO_C(-1)\oplus\OOO_C(-2)$
or $\OOO_C(-1)^{\oplus 2}$
as a sheaf, and
$\gr^3(\OOO,J)$ must have an $\ell$-direct summand which equals
$(-1)$ or lower, whence a contradiction by Lemma 2.6(ii).
\end{proof}

\begin{sde}{\bf Subcase \fref{equation-possibilities-(7.8.1.1)}{b}.}
\label{Subcasegr2OOOJsimeq2Ptoplus-1+3P}
Thus
\[
\gr^1_C\OOO= (2P^\sharp)\toplus (-1+P^\sharp), \quad
\gr^2(\OOO,J)=(2P^\sharp)\toplus (-1+3P^\sharp).
\]
This possibility can be treated similarly to \ref{case-up-1} and
we get \ref{main-theorem-1.1.2} (and \ref{1-main-theorem-1.1.2}).
\end{sde}
\end{de}

\begin{thm}{\bf Proposition.}
\label{proposition-existence}
All the possibilities
\fref{equation-division-cases-(7.8.2.2)}{b}, 
\fref{cases-gr1COOO-S4}{a},
\fref{equation-possibilities-(7.8.1.1)-a}{a}, and
\fref{equation-possibilities-(7.8.1.1)}{b} do occur.
\end{thm}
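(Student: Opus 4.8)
The plan is to prove existence by exhibiting, for each of the four surviving $\ell$-structures, an explicit germ realizing it. All four cases have $\ell(P)=1$ and produce a \emph{divisorial} contraction to an \type{A_1} point, so I would treat them uniformly: build the threefold germ $(X,C)$ from explicit local equations, glue along $C\simeq\PP^1$ using a second affine chart, and then exhibit the contraction $f$ together with its $2$-dimensional exceptional locus by hand. Since the cases \fref{equation-division-cases-(7.8.2.2)}{b} and \fref{equation-possibilities-(7.8.1.1)-a}{a} have $\Sing(X)=\{P\}$ while \fref{cases-gr1COOO-S4}{a} and \fref{equation-possibilities-(7.8.1.1)}{b} carry an extra type \type{(III)} point $R$, the gluing data differs accordingly, but the local model at $P$ is the same in all four.

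At $P$ I would fix the normal form of \xref{(7.5)}: as $\ell(P)=1$ we have $j=2$, so $X=\{\alpha=0\}/\muu_4(1,1,3,2)$ with $\alpha\equiv y_1y_2\bmod(y_2,y_3,y_4)^2$ and $y_2^2,\,y_3^2\in\alpha$ (the \type{cAx/4} condition). The only freedom lies in the higher-order terms of $\alpha$, and I would choose them so that the laminal ideal $J$ of \xref{definition-J} realizes the prescribed splitting of $\gr^2(\OOO,J)$. Concretely, for \fref{equation-possibilities-(7.8.1.1)-a}{a} and \fref{equation-possibilities-(7.8.1.1)}{b} one takes $J^\sharp=(y_2,y_4)$ as in \eqref{equation(7.8.1.1)complete-intersection} and arranges the coefficients of $y_1^2y_4$, $y_4^2$ and $y_2y_3$ (cf. Lemma \xref{lemma-extend-sections-D} and Corollary \xref{y12y4-in-beta}) so that $\gr^2(\OOO,J)=(2P^\sharp)\toplus(-1+3P^\sharp)$; for \fref{equation-division-cases-(7.8.2.2)}{b} and \fref{cases-gr1COOO-S4}{a} one instead takes $J/I^{(2)}=(1+P^\sharp)$, resp. $(P^\sharp)$, and tunes the cubic terms so that the $\ell$-exact sequence \eqref{equation-exact-sequence} is non-split, yielding $(1)\toplus(-1+P^\sharp)$, resp. $(0)\toplus(-1+P^\sharp)$. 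In the two cases with $\Sing(X)\neq\{P\}$ I would glue in a type \type{(III)} point $R=\{\gamma=0\}\subset\CC^4_{z}$ with $\gamma\equiv z_1z_2\bmod(z_2,z_3,z_4)^2$, choosing $\gamma$ with $z_4^2\notin\gamma$ so that $b=\ord_{(1,2)}(\phi)\ge 2$ in the sense of the analysis around \eqref{equation-possibilities-(7.8.1.1)}.

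Next I would verify the invariants directly from the chosen $\alpha$ (and $\gamma$): that $(X,P)$ is a terminal \type{cAx/4} point of type \type{(IIA)}, that $(X,R)$ is a type \type{(III)} point when present, and that $\gr^1_C\OOO$ and $\gr^2(\OOO,J)$ are exactly the claimed ones, with $\deg\gr^1_C\OOO=0$ when $\Sing(X)=\{P\}$ and $=-1$ otherwise, and $\h^1(\gr^1_C\OOO)=0$. Because the sections produced via Lemmas \xref{lemma-extend-sections-D} and \xref{H-is-normal} exist for this explicit $\alpha$, a general $H\in|\OOO_X|_C$ is normal and, near $P$, has the dual graph of \xref{main-theorem-1.1.1}, resp. \xref{main-theorem-1.1.2}; one further \type{A}-type point (at $R$, or at the extra simple zero of the defining section) completes the graph as dictated by \xref{sde1}, and Lemma \xref{label-surfaces-C} bounds the outside singularities.

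Finally I would produce the contraction, exhibiting $Z$ explicitly as a cDV germ (of type \type{cA} or \type{cD}) for which $H_Z=f(H)$ is Du Val of type \type{A_1}, and $f$ as the corresponding divisorial extraction; alternatively, having built $H$ with a graph known to contract to an \type{A_1} point, one invokes \xref{sde1}--\xref{sde} to identify $f$ as divisorial onto such a $Z$. The step I expect to be the main obstacle is precisely this last one: verifying that the glued threefold is genuinely contractible with $-K_X$ being $f$-ample, and that $f$ is \emph{divisorial} rather than flipping. I would control ampleness numerically, computing $-K_X\cdot C$ from the $\ell$-structure, and I would exclude flipping by noting that a flip would force $\gr^2(\OOO,J)$ (resp.\ $\gr^1_C\OOO$) into the split form triggering the criteria \cite[7.2.4, 7.3.4]{Kollar-Mori-1992} that were used to eliminate the neighbouring subcases — a form ruled out by our deliberate choice of non-split higher-order terms. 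Terminality and the existence of the contraction itself then follow from the explicit model, completing the verification that each of the four possibilities does occur.
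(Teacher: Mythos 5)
Your plan founders at exactly the step you yourself flag as the main obstacle, and the remedies you propose do not fill the gap. Gluing local models at $P$ (and $R$) along $C\simeq\PP^1$ produces, at best, \emph{some} analytic germ of a threefold along a rational curve; but a generic such germ admits no contraction $f:(X,C)\to (Z,o)$ with $C=f^{-1}(o)_{\red}$ at all. Positivity of $-K_X\cdot C$, which you propose to check numerically, is nowhere near sufficient: unlike the surface case (Grauert/Artin contractibility of a negative-definite rational configuration), there is no numerical contractibility criterion for threefold germs along a curve, and ``excluding flipping'' is vacuous until a contraction exists. Your fallback --- invoking \xref{sde1}--\xref{sde} once $H$ has a contractible dual graph --- is circular, since those statements describe properties of a contraction that is \emph{assumed} to exist. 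This existence problem is precisely why the paper does not construct $X$ by gluing.

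The paper's proof runs in the opposite direction: it starts from the surface. The explicit equations $\alpha,\beta$ arising in \xref{case-up-1}, \xref{pf-(7.8.2.2)}, \xref{case-(7.3.1.2)a-new}, \xref{Subcasegr2OOOJsimeq2Ptoplus-1+3P} give a surface germ $(H,C)$ with dual graph \xref{main-theorem-1.1.1} or \xref{main-theorem-1.1.2}, hence contractible to a Du Val point. One then checks, as in \cite[11.4.2]{Kollar-Mori-1992}, that the morphism $\operatorname{Def} H\to \operatorname{Def}(H,P)$ (resp. $\operatorname{Def} H\to \operatorname{Def}(H,P)\prod\operatorname{Def}(H,R)$ when the \type{(III)} point is present) is \emph{smooth}, takes the one-parameter deformation $H_t=\{\alpha=\beta-t=0\}/\muu_4$ near $P$ (and a similar one near $R$), and defines $X:=\cup_t H_t$ as the total space of this deformation. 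The contraction $f:X\to Z$ then exists by \cite[11.4.1]{Kollar-Mori-1992} --- fiberwise contractions of rational singularities deform along with $H_t$ --- and is divisorial because $C\subset H$ contracts to a point. Finally, a verification your sketch underestimates: one must identify \emph{which} $\ell$-structure the constructed $X$ realizes. The paper does this by noting that the local equation $\beta$ of $H$ at $P$ generates the positive rank-one part of $\gr_C^1\OOO$ (e.g. $\beta\equiv(\unit)\cdot y_1^2y_4\mod I_C^{(2)}$ in case \fref{equation-possibilities-(7.8.1.1)}{b}), which pins $\gr_C^1\OOO$ down to two candidates, and then uses the presence or absence of the extra \type{(III)} point together with \eqref{cases-gr1COOO-S4} to exclude one of them; note also that the paper is careful \emph{not} to claim the constructed $H$ is a general member of $|\OOO_X|_C$. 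So while your local equations at $P$ and $R$ are the right raw data, the passage from them to an actual extremal curve germ must go through this deformation-theoretic machinery rather than through gluing and an ad hoc construction of $f$.
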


\begin{proof}
We use deformation arguments (cf. \cite[\S 11]{Kollar-Mori-1992}). 
Suppose we are given $\alpha,\beta\in \CC\{y_1,\dots, y_4\}$ satisfying 
\ref{case-up-1},
\ref{case-(7.3.1.2)a-new}, \ref{Subcasegr2OOOJsimeq2Ptoplus-1+3P}, or \ref{pf-(7.8.2.2)}
and the corresponding $H \supset C$ as in \xref{main-theorem-1.1.1} or \xref{main-theorem-1.1.2}. 
Consider a small deformation $H_t=\{\alpha=\beta-t=0\}/\muu_4$
of $P\in H\subset U_P \subset X$ near the \type{cAx/4}-point $P$ 
(see
\cite[(7.7.1)]{Kollar-Mori-1992} for the case \ref{main-theorem-1.1.1} and 
\cite[(7.7.2)]{Kollar-Mori-1992} for the case \ref{main-theorem-1.1.2}).
In cases \fref{cases-gr1COOO-S4}{a} and \fref{equation-possibilities-(7.8.1.1)}{b}
we similarly construct a small deformation $H_t$ of $R\in H\subset U_R \subset X$ 
also near the \type{(III)}-point $R$.
Further, by the arguments similar to \cite[11.4.2]{Kollar-Mori-1992} we see that 
the natural morphism $\operatorname{Def} H \to \operatorname{Def} (H,P)$
(resp. $\operatorname{Def} H \to \operatorname{Def} (H,P)\prod \operatorname{Def} (H,R)$)
is smooth. 
Then we construct a threefold $X$ as a total one-parameter deformation space
$X=\cup H_t$ which induces a local deformation of $H$ in $U_P$ (resp., and $U_R$). 
This shows the existence of $X\supset C$
with $H\in |\OOO_X|_C$ and such that $C\cap U_P$ (resp. $C\cap U_P$ and $C\cap U_R$)
has the desired structure. (Note however that we do not assert that $H$ is general in $|\OOO_X|_C$.) 
By the construction and by \eqref{equation-alpha} we have $\ell(P)=1$.
The contraction $f: X\to Z$ exists by \cite[11.4.1]{Kollar-Mori-1992}
and it is divisorial because $C\subset H$ can be contracted to a Du Val point. 
It remains to show that the constructed $X$ has the desired 
sheaf $\gr_C^1\OOO$:
\smallskip
{\rm \begin{center}
\begin{tabular}{l|c|c}

Ref. (case division and conclusion) & $\gr_C^1\OOO$&$\Delta(H,C)$ 
\\[7pt]
\hline
&&
\\[-5pt]
\fref{equation-possibilities-1-gr1COOO}{b}-\fref{equation-division-cases-(7.8.2.2)}{b}, \ref{pf-(7.8.2.2)}
&$(1+P^\sharp)\toplus(-1+2P^\sharp)$&\xref{main-theorem-1.1.1}

\\
\fref{cases-gr1COOO-S4}{a}, \ref{case-(7.3.1.2)a-new} 
&$(P^\sharp)\toplus(-1+2P^\sharp)$&\xref{main-theorem-1.1.1}

\\
\fref{equation-possibilities-1-gr1COOO}{a}-\fref{equation-possibilities-(7.8.1.1)-a}{a}, \ref{case-up-1}
&$(1+2P^\sharp)\toplus(-1+P^\sharp)$&
\xref{main-theorem-1.1.2}
\\

\fref{cases-gr1COOO-S4}{b}-\fref{equation-possibilities-(7.8.1.1)}{b}, \ref{Subcasegr2OOOJsimeq2Ptoplus-1+3P}
&$(2P^\sharp)\toplus(-1+P^\sharp)$&
\xref{main-theorem-1.1.2}
\end{tabular}
\end{center}}
Consider, for example, the case \fref{equation-possibilities-(7.8.1.1)}{b}
(other cases are similar). 
The local equation $\beta$ of $H$ at $P$ gives us a local generator 
of the positive (rank 1) part of $\gr_C^1\OOO$, because
$\beta\equiv (\unit)\cdot y_1^2y_4\mod I_C^{(2)}$.
Therefore, $\gr_C^1\OOO\simeq (1+2P^\sharp)\toplus(-1+P^\sharp)$ or $(2P^\sharp)\toplus(-1+P^\sharp)$.
Since we have an extra \type{(III)}-point, the first possibility does not occur
by \eqref{cases-gr1COOO-S4}.
\end{proof}

\section{Case $\ell(P)=3$.}\label{section-lP=3}

\begin{lemma}\label{lemma-new-ell-p}
Assume that $\h^0(\gr^1_C\OOO)\neq 0$. Then we have the following.
\begin{enumerate}
\item 
If $\ell(P)\ge 3$, then $\Sing(X)=\{P\}$.
\item 
$\ell(P)\le 5$.
\end{enumerate}
\end{lemma}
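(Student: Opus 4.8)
The plan is to reduce both statements to a single numerical bound on $\deg\gr^1_C\OOO$. First I would record that, because $I^{(2)}$ is the symbolic square, the sheaf $\gr^1_C\OOO = I/I^{(2)}$ is torsion-free of rank $2$ on the smooth rational curve $C\simeq\PP^1$, hence locally free, say $\gr^1_C\OOO\simeq\OOO_C(a)\oplus\OOO_C(b)$. By \xref{notation-grn} we have $\h^1(\gr^1_C\OOO)=0$, which forces $a,b\ge -1$, while the hypothesis $\h^0(\gr^1_C\OOO)\neq 0$ forces $\max(a,b)\ge 0$. Consequently
\[
\deg\gr^1_C\OOO \;=\; a+b \;\ge\; -1,
\]
and this inequality is the constraint I will violate once $\ell(P)$ is too large or a second singular point is present. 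Note that $\h^1=0$ alone only gives $\deg\ge -2$; it is precisely the hypothesis $\h^0\neq 0$ that excludes the boundary value $\deg=-2$, i.e. $\gr^1_C\OOO\simeq\OOO_C(-1)^{\oplus 2}$.

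The second ingredient is the formula of \cite[2.3.2]{Mori-1988} expressing $\deg\gr^1_C\OOO$ through the local invariants $i_Q(1)$ of the points $Q\in C$. In our situation \xref{Set-up} the point $P$ is the only non-Gorenstein point, and there is at most one further singular point $R$, which is then of type \type{(III)} with $i_R(1)=1$ (as recalled in \xref{(7.8)a}). The formula evaluates in our configuration to
\[
\deg\gr^1_C\OOO \;=\; 1 - i_P(1) - \sum_{R\neq P} i_R(1),
\]
in agreement with the two values already computed in the text: $\deg\gr^1_C\OOO=0$ when $\Sing(X)=\{P\}$ and $\ell(P)=1$, and $\deg\gr^1_C\OOO=-i_P(1)=-1$ in the case of \xref{(7.8)a}. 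I would then feed in the relation $i_P(1)=\lfloor(\ell(P)+6)/4\rfloor$ of \eqref{equation-iP-lP}: since $\ell(P)\not\equiv 2\bmod 4$, one has $i_P(1)\ge 2$ exactly for $\ell(P)\ge 3$, and $i_P(1)\ge 3$ exactly for $\ell(P)\ge 7$.

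With these in hand both assertions are immediate. For (i), suppose $\ell(P)\ge 3$ and $\Sing(X)\neq\{P\}$; then there is a genuine extra singular point $R$ on $C$, so $i_R(1)\ge 1$, while $i_P(1)\ge 2$, giving $\deg\gr^1_C\OOO = 1-i_P(1)-\sum_{R\neq P} i_R(1)\le 1-2-1=-2$, contradicting $\deg\gr^1_C\OOO\ge -1$. Hence $\Sing(X)=\{P\}$. For (ii), suppose $\ell(P)\ge 6$, equivalently $\ell(P)\ge 7$; by (i) we then have $\Sing(X)=\{P\}$, whence $\deg\gr^1_C\OOO = 1-i_P(1)\le 1-3=-2$, again contradicting $\deg\gr^1_C\OOO\ge -1$. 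Therefore $\ell(P)\le 5$.

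The cohomological step and the arithmetic with the floor function are routine; the real work---and the step I expect to be the main obstacle---is justifying the exact shape of the degree formula from \cite[2.3.2]{Mori-1988}. One must track how a type \type{(IIA)} point and an auxiliary type \type{(III)} point each enter that formula, in particular confirming that the $R$-contribution is $i_R(1)=1$ (here $i_R(1)\ge 1$ already suffices for the contradiction) and that the normalization of the constant term is the one forcing $\deg\gr^1_C\OOO=0$ in the base case $\Sing(X)=\{P\}$, $\ell(P)=1$. Once the formula is fixed, everything else is the elementary estimate above.
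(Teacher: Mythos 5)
Your proof is correct and takes essentially the same route as the paper's: the paper likewise combines the degree formula $\deg\gr^1_C\OOO = 1-\sum_Q i_Q(1)$ from \cite[2.3.2]{Mori-1988} with $i_P(1)=\lfloor(\ell(P)+6)/4\rfloor$, the exclusion $\ell(P)\not\equiv 2 \pmod 4$, and the bound $\deg\gr^1_C\OOO\ge -1$ forced by $\h^1(\gr^1_C\OOO)=0$ together with $\h^0(\gr^1_C\OOO)\neq 0$. The only difference is that the paper compresses this into a few lines, while you spell out the local-freeness and the cohomological bookkeeping explicitly.
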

\begin{proof}
(i)
Assume that $X$ has another singular point $R$.
Since $i_P(1)\ge 2$ (see \eqref{equation-iP-lP}) and $i_R(1)\ge 1$, $\deg\gr_C^1\OOO\le -2$.
This contradicts $\h^0(\gr^1_C\OOO)\neq 0$
(see \ref{notation-grn}).
(ii) follows similarly from $i_P(1)\le 3$
and $\ell(P)\not \equiv 2\mod 4$ (see \ref{(7.5)}).
\end{proof}
Below in this section we assume that $\ell(P)=3$ and $\h^0(\gr^1_C\OOO)\neq 0$.
\begin{de}
\label{(7.10)}
We use the notation of \ref{(7.5)} at $P$.
In particular, the equation of $X$
in $\CC^4_{y_1,\cdots,y_4}/\muu_4(1,1,3,2)$ has the following form
\begin{equation}
\label{equation-alpha-lP=3}
\alpha=y_1^3y_3+y_2^2+y_3^2+\delta y_4^{2k+1}+cy_1^2y_4^2+\epsilon y_1y_3y_4+\cdots,
\end{equation}
where $\delta\neq 0$ and $2k+1$ is the lowest power of $y_4$ appearing in $\alpha$.

\begin{sde}\label{(7.10)-a}
Since $i_P(1)=2$ (see \ref{equation-iP}), we have
$\deg \gr^1_C\OOO=-1$. Hence,
$\gr^1_C\OOO\simeq\OOO\oplus\OOO(-1)$ because $\h^1(\gr^1_C\OOO)=0$.
Moreover, $y_2$ and $y_4$ form an $\ell$-free $\ell$-basis of $\gr^1_C\OOO$ at $P$.
Therefore, we have one of the following $\ell$-isomorphisms
\begin{equation}\label{equation-(7.4.1.1)}
\gr^1_C\OOO=
\left\{\begin{array}{l@{\hbox{\hspace{70pt}}}l}
(2P^\sharp)\toplus(-1+3P^\sharp),&\flabel{equation-(7.4.1.1)}{a}
\\
(3P^\sharp)\toplus(-1+2P^\sharp).&\flabel{equation-(7.4.1.1)}{b}
\end{array} \right.
\end{equation}
\end{sde}
\end{de}

\begin{de}
In the case \fref{equation-(7.4.1.1)}{a} the germ $(X,C)$ is flipping
by \cite[(7.4.4)]{Kollar-Mori-1992}.
So we assume that there is an $\ell$-isomorphism
\begin{equation}\label{equation(7.8.1.1)-b}
\gr^1_C\OOO= (3P^\sharp)\toplus(-1+2P^\sharp).
\end{equation}
In this case, $y_2$ and $y_4$ form $\ell$-free $\ell$-bases of
$(3P^\sharp)$ and $(-1+2P^\sharp)$, respectively.
We investigate a general member $H\in |\OOO_X|_C$.
Let $\beta=0$ be its local equation at $P$.
By \eqref{equation(7.8.1.1)-b}
the term $y_1^2y_4$ does not appear in $\beta$
(otherwise it generates a global section of
$(-1+2P^\sharp)$).
On the other hand, $y^2_4$ and $y_2y_3$ must
appear in $\beta$ by Lemma \ref{lemma-extend-sections-D}.
Write 
\begin{equation}\label{equation-beta-new}
\beta= \theta y_4^2+\nu y_2y_3+\lambda y_1y_3+\mu y_1^3y_2+\cdots,
\ \lambda, \mu, \in \CC,\ \theta, \nu\in \CC^*.
\end{equation}
\end{de}

\begin{de}
Let $J$ be the ideal such that $I\supset J\supset I^{(2)}$ and
$J/I^{(2)}=(3P^\sharp)$.
Then $J^\sharp=(y_2,\, y_3,\, y^2_4)$.
Since $y^2_4$ does
not appear in $\alpha$, we have
\begin{equation}
\label{equation-sect5-new-alpha}
\alpha\equiv y^3_1y_3+y^2_1y^2_4\gamma (y^4_1)=
y_1^2\left(y_1y_3+y^2_4\gamma (y^4_1)\right)
\mod I^\sharp J^\sharp
\end{equation}
for some $\gamma (T)\in\CC\{T\}$ such that $\gamma(0)=c$.
If $c\neq 0$, we get $\gr^{2,1}(\OOO,J)=(-1+P^\sharp)$
with an $\ell$-basis $y_3$.
If $c= 0$, then $\gr^{2,1}(\OOO,J)=(-1)$ with an $\ell$-basis
$y_4^2$.
Then using \eqref{equation-exact-sequence}
we get the following possibilities:
\begin{equation}\label{equation-lP=3-gr2O-1}
\gr^2(\OOO,J)=
\left\{
\begin{array}{lll@{\hbox{\hspace{70pt}}}l}
(P^\sharp)&\toplus&(-1+3P^\sharp),&\flabel{equation-lP=3-gr2O-1}{a}
\\
(3P^\sharp)&\toplus&(-1+P^\sharp),&\flabel{equation-lP=3-gr2O-1}{b}
\\
(0)&\toplus&(-1+3P^\sharp), &\flabel{equation-lP=3-gr2O-1}{c}
\\
(3P^\sharp)&\toplus&(-1),&\flabel{equation-lP=3-gr2O-1}{d}
\end{array}
\right.
\end{equation}
where $c\neq 0$ if and only if we are in the case
\fref{equation-lP=3-gr2O-1}{a} or \fref{equation-lP=3-gr2O-1}{b}.
The case \fref{equation-lP=3-gr2O-1}{d} is disproved 
by applying Lemma \ref{Lemma-fiber}(ii) with $\KKK=J$.
\end{de}

\begin{de}{\bf Subcases \fref{equation-lP=3-gr2O-1}{a} and \fref{equation-lP=3-gr2O-1}{c}.}
\label{Subcases-lP=3-34-1}
We use the arguments \ref{definition-J}-\ref{y12y4-in-beta}.
Since $\gr^1(\OOO,J)= (-1+2P^\sharp)$
and $\gr^2(\OOO,J)= (aP^\sharp)\toplus(-1+3P^\sharp)$ with $a=1$ or $0$,
we can apply Lemma \ref{case-up-1-s-in--KX} and get a member 
$E\in |-K_X|$ and an $\ell$-isomorphism 
\[
\gr^2(\OOO,J)=(aP^\sharp)\toplus \OOO_C(E).
\]
Let 
$pF^n(\OOO_E,J)$ and $\pgr^n(\OOO_E,J)$ be as in \ref{define-pF}.
Then by \eqref{equation-new-pgr2}
and Corollary \ref{y12y4-in-beta} there exists $\beta\in \h^0(J)$ 
as in \eqref{equation-beta-new} such that $\lambda$ and $\theta$ 
are independent over the coefficients of $\alpha$.
The image of $\beta$ in $\gr^1_C\OOO$
is a global generator of $(3P^\sharp) \subset \gr^1_C\OOO$
and so $y_1^3y_2 \in \beta$, i.e. $\mu\neq 0$.
In particular, $H$ is normal.
Now we can apply Computation \ref{computation-lP=3a-new1}
in which $(a,b)=(1,0)$ and $\lambda$ is general with respect to the coefficients of 
$\alpha$.
By \ref{sde1}, we see that the contraction
$f$ is divisorial and the only case \ref{computation-lP=3a-new1} \type{a)} occurs.
Thus we obtain \ref{main-theorem-1.1.3}.
\end{de}

The existence of the cases \fref{equation-lP=3-gr2O-1}{a} and \fref{equation-lP=3-gr2O-1}{c} 
can be shown similarly to Proposition \ref{proposition-existence}.
This also follows from the following.

\begin{de}{\bf Example.}\label{example-1-1-3}
Let $Z \subset {\mathbb C}^5_{z_1,\ldots,z_5}$ be defined by two equations:
\begin{eqnarray*}
0&=& z_2^2+z_3+z_4z_5^k+z_1^3,\qquad k\ge 1,\\
0&=& z_1^2z_2^2 + z_4^2-z_3z_5+z_1^3z_2+cz_1^2z_4.
\end{eqnarray*}
By eliminating $z_3$ using the first equation,
one sees easily that $(Z,0)$ is a
threefold singularity of type \type{cD_{5}}.
Let $B \subset Z$ be the $z_5$-axis, and
let $f : X \to Z$ be the weighted blowup with
weight $(1,1,4,2,0)$. So the support of the center
of the blowup coincides with $B$.
In the weighted blowup computation one sees easily that
$C:=f^{-1}(0)_{\red} \simeq {\PP}^1$
and $X$ is covered by two charts: $z_1$-chart
and $z_3$-chart. The origin
of the $z_3$-chart
is a type \type{(IIA)} point $P$ with $\ell(P)=3$:
\[
\{y_1^3y_3+y_2^2+y_3^2+y_4(y_1^2y_2^2+y_4^2+y_1^3y_2+c y_1^2 y_4)^k=0\}/\muu_{4}(1,1,3,2),
\]
where $(C,P)$ is the $y_1$-axis.
Moreover, $X$ is smooth outside of $P$.
Thus $X\to Z$ is a divisorial contraction of type 
\fref{equation-lP=3-gr2O-1}{a} if $c\neq 0$ and $k=1$  and \fref{equation-lP=3-gr2O-1}{c} otherwise.
\end{de}

\begin{lemma}\label{Lemma-Subcases-lP=3-34}
The subcase \fref{equation-lP=3-gr2O-1}{b} does not occur.
\end{lemma}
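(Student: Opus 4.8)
The plan is to derive a contradiction from Lemma \ref{Lemma-fiber}(ii), exploiting that in subcase \fref{equation-lP=3-gr2O-1}{b} the $\ell$-exact sequence \eqref{equation-exact-sequence} is $\ell$-split. Recall that here $c\neq0$, so that $\gr^1(\OOO,J)=(-1+2P^\sharp)$, $\gr^{2,0}(\OOO,J)=(3P^\sharp)$ (with $\ell$-basis $y_2$), $\gr^{2,1}(\OOO,J)=(-1+P^\sharp)$, and by assumption $\gr^2(\OOO,J)=(3P^\sharp)\toplus(-1+P^\sharp)$, the summand $(3P^\sharp)$ being exactly $\gr^{2,0}(\OOO,J)$. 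The idea is to kill this split summand: I would set
\[
\KKK:=(y_2)+F^3(\OOO,J),
\]
an $I$-primary ideal with $F^3(\OOO,J)\subset\KKK\subset J$, and study $\KKK/\Sat(I\KKK)$.

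First I would check the numerical hypotheses of Lemma \ref{Lemma-fiber}(ii). Since $y_2$ spans the $\ell$-summand $\gr^{2,0}(\OOO,J)$, one has $J/\KKK=\gr^2(\OOO,J)/\gr^{2,0}(\OOO,J)=\gr^{2,1}(\OOO,J)$, so $\OOO/\KKK$ has associated graded pieces $\gr^0$, $\gr^1(\OOO,J)$, $\gr^{2,1}(\OOO,J)$; hence $\corank(\KKK)=3\le6$, and a direct computation with $\omega:=\omega_X\totimes\OOO_C$ gives $\chi(\omega/\omega\totimes\KKK)=0$. Next I would compute the minimal generators $\KKK/\Sat(I\KKK)$: besides the class of $y_2$ (giving $(3P^\sharp)$), the element $y_2y_4\in I\KKK$ generates $\gr^{3,0}(\OOO,J)=\gr^1(\OOO,J)\totimes\gr^{2,0}(\OOO,J)$, so the level-$0$ part of $\gr^3(\OOO,J)$ is absorbed and the remaining generator is
\[
\gr^{3,1}(\OOO,J)=\gr^1(\OOO,J)\totimes\gr^{2,1}(\OOO,J)=(-2+3P^\sharp).
\]
Thus $\KKK/\Sat(I\KKK)$ contains the $\ell$-direct summand $(-2+3P^\sharp)$, which is of the forbidden type $(i+jP^\sharp)$ with $i=-2\le-2$, $0\le j\le3$.

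This contradicts Lemma \ref{Lemma-fiber}(ii), which is the desired conclusion. The point where \fref{equation-lP=3-gr2O-1}{b} differs from the surviving cases \fref{equation-lP=3-gr2O-1}{a}, \fref{equation-lP=3-gr2O-1}{c} is precisely the $\ell$-splitting: it is what makes $y_2$ span an honest $\ell$-summand, so that $\OOO/\KKK$ has corank $3$ with $\chi=0$ and $\gr^{3,1}(\OOO,J)$ survives as a minimal generator of $\KKK$; in the non-split case \fref{equation-lP=3-gr2O-1}{a} the lift $y_2$ is carried into the degree $-1$ summand at $P$ and this bookkeeping breaks down. I expect the main obstacle to be the last step—verifying that $\gr^{3,0}(\OOO,J)$ is entirely absorbed into $I\KKK$ while $\gr^{3,1}(\OOO,J)=(-2+3P^\sharp)$ persists as an $\ell$-summand of $\KKK/\Sat(I\KKK)$—since the Euler characteristics and the sheaf isomorphism types of the $\gr^n(\OOO,J)$ coincide with those of case \fref{equation-lP=3-gr2O-1}{a}, so that the exclusion is invisible at the purely numerical level and must be read off from the $\ell$-structure of the generators.
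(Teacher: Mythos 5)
Your numerical set-up is correct: with $\KKK=(y_2)+F^3(\OOO,J)$ (which is exactly the ideal the paper's own proof uses) one does get $\corank(\KKK)=3$ and $\chi(\omega/\omega\totimes\KKK)=0$, so Lemma \ref{Lemma-fiber}(ii) applies. But the argument breaks at the step you yourself flagged as the main obstacle: $(-2+3P^\sharp)$ is not an $\ell$-direct summand of $\KKK/\Sat(I\KKK)$, for two separate reasons. First, the sub-object supplied by the filtration is not the image of $\gr^1(\OOO,J)\totimes\gr^{2,1}(\OOO,J)$ but its \emph{saturation}, and saturation changes the $\ell$-structure here: since $c\neq0$ in subcase \fref{equation-lP=3-gr2O-1}{b}, the element $z:=cy_4^2+y_1y_3$ satisfies $y_1^2z\in I^\sharp J^\sharp$, hence $z\in\Sat(I^\sharp J^\sharp)$, and one computes $(\epsilon-c^{-1})\,y_3y_4\equiv -y_1z \bmod J^{\sharp(2)}$ (this is \eqref{compKatP}); thus the product $y_3y_4$ of your two $\ell$-bases is $y_1$ times a section, the saturated sub-object is $(-1)$ with $\ell$-basis $z$, and $\KKK/\Sat(I\KKK)$ sits in the $\ell$-exact sequence
\[
0\longrightarrow(-1)\longrightarrow\KKK/\Sat(I\KKK)\longrightarrow(3P^\sharp)\longrightarrow 0
\]
(the paper's \eqref{Lsplitting-for-gr3OK}). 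In particular the underlying sheaf is $\OOO\oplus\OOO(-1)$, so $(-2+3P^\sharp)$ cannot be a summand for elementary reasons. Second, and decisively, a graded piece of a filtration need not be a direct summand: Lemma \ref{Lemma-fiber}(ii) forbids only $\ell$-\emph{direct summands}, and applied to the sequence above its correct conclusion is that the sequence is \emph{not} $\ell$-split --- whence $\KKK/\Sat(I\KKK)=(0)\toplus(-1+3P^\sharp)$, which contains no forbidden summand and yields no contradiction. Note also that the $\ell$-splitting of \eqref{equation-exact-sequence} in degree $2$, which is what distinguishes \fref{equation-lP=3-gr2O-1}{b} from \fref{equation-lP=3-gr2O-1}{a}, does not propagate to degree $3$, so the dichotomy your last paragraph relies on evaporates.

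This non-splitting statement is where the paper's proof \emph{starts}, not where it ends; no bookkeeping of coranks, Euler characteristics and $\ell$-structures of the $\gr^n$ suffices to kill this case. Having forced $\KKK/\Sat(I\KKK)=(0)\toplus(-1+3P^\sharp)$ via Lemma \ref{Lemma-fiber}(ii), the paper turns geometric: after a preliminary deformation (legitimate because $y_3y_4,\,y_4^3\in F^3(\OOO,J)$) making $k=1$ and $\delta,\epsilon$ general, it lifts $y_4^2$ to a global section $\beta\in\h^0(\KKK)=\h^0(I)$ by Lemma \ref{lemma-extend-sections-D}, checks that the image of $\beta$ in $\gr^3(\OOO,\KKK)$ is $z/c$ modulo $y_1\KKK^\sharp$, so that in the normal form \eqref{equation-beta-new} one has $\lambda=1/c\neq0$, $\mu\neq0$ and $y_1^2y_4\notin\beta$, and then runs the explicit resolution of $H=\{\beta=0\}$ in Computation \ref{computation-lP=3a-new1}. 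Because $c\lambda=a=1$, only the graphs \type{c_{s,r})} can occur there, and those configurations are not contractible, contradicting \ref{sde1}. That global, geometric step is the actual contradiction, and it has no counterpart in your proposal.
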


\begin{proof}
Assume the contrary, that is,
\[
\gr^1_C\OOO= (3P^\sharp)\toplus(-1+2P^\sharp),\quad
\gr^2(\OOO,J)= (3P^\sharp)\toplus(-1+P^\sharp).
\]
We have $I^\sharp=(y_2,y_3,y_4)$, the equation $\alpha$ has the form
\[
\alpha=y_1^3y_3+y_2^2+y_3^2+\delta y_4^{2k+1}+c y_1^2y_4^2+\epsilon y_1y_3y_4+\cdots,\quad c\neq 0,
\]
and $J^\sharp=(y_2,y_3,y_4^2)$
by the choice of $y_2$ and $y_4$ in \ref{(7.10)-a}.
Therefore, $I^\sharp J^\sharp=(y_2^2, y_2y_3, y_2y_4, y_3^2, y_3y_4, y_4^3)$
and $y_3y_4, y_4^3 \in I^\sharp J^\sharp=F^3(\OOO,J)$.
Hence a general deformation of the form $\alpha_{t_1,t_2}=\alpha+t_1y_4^3+t_2y_1y_3y_4$
preserves our assumptions on the $\ell$-splittings of $\gr^1_C\OOO$ and $\gr^2(\OOO,J)$.
Thus in the proof of Lemma \ref{Lemma-Subcases-lP=3-34},
we may assume that $k=1$ and the coefficients $\epsilon$ and $\delta$ are general enough.

Denote $z:=cy_4^2+y_1y_3$. Then
$y_1^2z=y_1^3y_3+cy_1^2y_4^2\in I^\sharp J^\sharp$. Hence
$z\in \Sat(I^\sharp J^\sharp)$ and
$y_2$, $y_3$ generate the rank-two sheaf $J^\sharp/(z)+I^\sharp J^\sharp$.
Thus $y_2, y_3$ form a free basis of $J^\sharp/(z)+I^\sharp J^\sharp$
and $\Sat(I^\sharp J^\sharp)=(z)+I^\sharp J^\sharp$.
In the natural diagram
\[
\xymatrix@R=13pt{
*+[l]{(-1)\ =\ (1+2P^\sharp)^{\totimes 2}}\ar@/_7pt/[dr]\,\ \ar@{^{(}->}[r] &\gr^{2,1}(\OOO,J)
\\
&(-1+P^\sharp),\ar[u]
}
\]
the $\ell$-invertible sheaf $(-1)$ (resp. $(-1+P^\sharp)$)
has an $\ell$-basis $y_4^2$ (resp. $y_3$)
since $cy_4^2 \equiv y_1y_3 \mod F^3(\OOO,J)=\Sat(I^\sharp J^\sharp)$.
Further, the standard $\ell$-exact sequence (see \eqref{equation-exact-sequence})
\begin{equation}\label{Lsplitting-for-gr2OJ}
0 \longrightarrow (-1+P^\sharp) \longrightarrow
\gr^2(\OOO,J) \longrightarrow (3P^\sharp) \longrightarrow 0
\end{equation}
is $\ell$-split.
Recall that $(3P^\sharp)$ has an $\ell$-basis $y_2$ at $P$.
We can write $\alpha\equiv y_4^3+y_1^2z+ \epsilon y_1y_3y_4 \mod J^{\sharp 2}$.
Recall that $c\neq 0$.
Since $y_4z\in I^\sharp\Sat(I^\sharp J^\sharp)$ and $J\supset I^{(2)}$,
we have $cy_4^3+y_1y_3y_4=y_4z\in J^{\sharp (2)}$.
On the other hand, $y_4^3+y_1^2z+\epsilon y_1y_3y_4\in J^{\sharp 2}$.
So,
\begin{equation}\label{compKatP}
(c\epsilon - 1)y_4^3 \equiv y_1^2 z, \quad
\bigl(\epsilon -c^{-1}\bigr) y_3y_4\equiv -y_1z\mod J^{\sharp (2)}.
\end{equation}
Since
\[
\Sat(I^\sharp J^\sharp)/J^{\sharp 2}=\bigl((z)+y_4 J^\sharp+J^{\sharp 2}\bigr)/J^{\sharp 2},
\]
$\gr^3(\OOO,J)^\sharp=\Sat(I^\sharp J^\sharp)/ J^{\sharp (2)}$ is generated by
$z$ and $y_2y_4$ at $P^\sharp$.

Take the ideal $\KKK$ so that
$J\supset \KKK\supset \Sat(IJ)$ and $\KKK/\Sat(IJ)=(3P^\sharp)$,
with $\ell$-basis $y_2$.
Hence $\KKK^\sharp/J^{\sharp (2)}$ is generated by $y_2, z$ at $P^\sharp$.
The following sequence \cite[(8.6)]{Mori-1988}
\[
0\longrightarrow (-1+2P^\sharp)^{\totimes 3} \longrightarrow
\gr^3(\OOO,\KKK)\longrightarrow(3P^\sharp)\longrightarrow 0
\]
is exact outside of $P$ because,
in suitable local coordinates
$u,\, v,\, w$ at some point $Q\in C$, $Q\neq P$, we have $I=(u,v)$, $J=(u,v^2)$, $\KKK=(u,v^3)$,
and
$\KKK I=(u^2, uv, v^4)\supset J^2=(u^2,uv^2, v^4)$.
We also note $\Sat(\KKK^\sharp I^\sharp)\supset J^{\sharp (2)}$
by the above computation.
The above sequence induces the following $\ell$-exact sequence
\begin{equation}\label{Lsplitting-for-gr3OK}
0\longrightarrow (-1)\longrightarrow
\gr^3(\OOO,\KKK)\longrightarrow(3P^\sharp)\longrightarrow 0
\end{equation}
because
$(-1+2P^\sharp)^{\totimes 3} \hookrightarrow
(-1)\rightarrow\gr^3(\OOO,\KKK)$,
where $z$ is an $\ell$-basis of
$(-1)=\Sat(I^\sharp J^\sharp)/\Sat(\KKK^\sharp I^\sharp)$
by \eqref{compKatP}.

Further, since $I/J=(-1+P^\sharp)$ and $J/\KKK=(-1+3P^\sharp)$
have no global sections,
we have $\h^0(I)=\h^0(J)=\h^0(\KKK)$.
Take a global section $\beta$ of $\KKK$
such that $\beta \equiv y_4^2 \mod (y_1)$ at $P^\sharp$ by Lemma \ref{lemma-extend-sections-D}.
Then its image $\bar{\beta}$ in $\gr^3(\OOO,\KKK)$ is non-zero
because $\bar{\beta} \equiv z/c \mod y_1\KKK^\sharp$ by
$\beta - z/c \in (y_1)\cap \KKK^\sharp= y_1\KKK^\sharp$.
The image of $\bar{\beta}$ under the composition map
$\KKK \rightarrow \gr^3(\OOO,\KKK) \rightarrow (3P^\sharp)
\subset \gr^1_C \OOO$ is not zero
because $\h^0(\gr^3(\OOO,\KKK)) \rightarrow \h^0((3P^\sharp))
\to \h^0(\gr^1_C \OOO)$
is an isomorphism.

Then the equation $\beta=0$ defines a normal surface which is smooth outside of $P$.
Recall that $\gr^1_C\OOO=(3P^\sharp)\toplus (-1+2P^\sharp)$.
Since $y_2$ is an $\ell$-basis of $(3P^\sharp)\subset \gr^1_C\OOO$, we have $y_1^3y_2\in \beta$,
i.e. $\mu\neq 0$.
Since $y_4$ is an $\ell$-basis of the summand $(-1+2P^\sharp)$ and
this summand
has no global sections, $y_1^2y_4\notin\beta$.
Note that
$\gr^1(\OOO,\KKK)=(-1+2P^\sharp)$
and $\gr^2(\OOO,\KKK)=(-1+P^\sharp)$ by \eqref{Lsplitting-for-gr2OJ}.
Using the standard exact sequence similar to \eqref{equation-exact-sequence-F-omega}
we obtain $\chi(\omega/ F^3(\OOO,\KKK))=0$.
Hence by Lemma \ref{Lemma-fiber}(ii) the sequence \eqref{Lsplitting-for-gr3OK}
is not $\ell$-split.
Therefore, we have an $\ell$-splitting
\begin{equation*}
\gr^3(\OOO,\KKK)= (0)\toplus(-1+3P^\sharp).
\end{equation*}
By $\bar{\beta} \equiv z/c \in \gr^3(\OOO,\KKK)$, as above, we have
$\lambda=1/c\neq 0$.
Now apply Computation \ref{computation-lP=3a-new1}.
Since $c\lambda=a=1$, $\mu\neq 0$ and both $\delta$ and $\epsilon$ are general,
we see that only graphs \type{c_{s,r})}
are possible. On the other hand,
the whole configuration must be contractible, a contradiction.
\end{proof}

\section{Case $\ell(P)=4$.}\label{section-lP=4}
In this section we assume that $\ell(P)=4$ and $\h^0(\gr^1_C\OOO)\neq 0$.

\begin{de}
\label{(7.10)-b}
By Lemma \ref{lemma-new-ell-p}, $P$ is the only singular point of $X$.
Write
\[
\alpha=y_1^4y_4+y_2^2+y_3^2+\delta y_4^{2k+1}+\cdots,
\]
where $\delta\neq 0$ and $2k+1$ is the lowest power of $y_4$ appearing in $\alpha$.
We have $i_P(1)=2$ and $\deg\gr^1_C\OOO=-1$.
Hence,
\begin{equation}\label{equation-(7.4.1.1)-2b}
\gr^1_C\OOO=
\left\{
\begin{array}{lll@{\hbox{\hspace{70pt}}}l}
(P^\sharp)&\toplus&(-1+3P^\sharp),&\flabel{equation-(7.4.1.1)-2b}{a}
\\
(3P^\sharp)&\toplus&(-1+P^\sharp).&\flabel{equation-(7.4.1.1)-2b}{b}
\end{array}
\right.
\end{equation}
In the case \fref{equation-(7.4.1.1)-2b}{a} the germ $(X,C)$ is flipping
by \cite[(7.4.4)]{Kollar-Mori-1992}.
So we assume that \fref{equation-(7.4.1.1)-2b}{b} holds.
Then $y_2$ and $y_3$ form $\ell$-free $\ell$-bases of
$(3P^\sharp)$ and $(-1+2P^\sharp)$, respectively.
\end{de}

\begin{de}
Let $J$ be the ideal such that $I\supset J\supset I^{(2)}$ and
$J/I^{(2)}=(3P^\sharp)$.
Since $\gr^{2,1}(\OOO,J)= (-1+2P^\sharp)$ and $\gr^{2,0}(\OOO,J)= (3P^\sharp)$,
by the exact sequence \eqref{equation-exact-sequence} we have two possibilities:
\begin{equation}\label{equation-lP=4-grOJ}
\gr^2(\OOO,J)=
\left\{\begin{array}{l@{\hbox{\hspace{70pt}}}l}
(3P^\sharp)\toplus(-1+2P^\sharp),&\flabel{equation-lP=4-grOJ}{a}
\\
(2P^\sharp)\toplus(-1+3P^\sharp).&\flabel{equation-lP=4-grOJ}{b}
\end{array} \right.
\end{equation}

\begin{slemma}
The subcase {\fref{equation-lP=4-grOJ}{a}} does not occur.
\end{slemma}

\begin{proof}
Assume that we are in the situation \fref{equation-lP=4-grOJ}{a}.
Then the sequence \eqref{equation-exact-sequence}
splits. Hence, $(y_3,y_2,y_4)$ is a $(1,3,2)$-monomializing $\ell$-basis
\cite[8.11]{Mori-1988}. Take the ideal $\KKK$ so that
$J\supset \KKK\supset IJ$ and $\KKK/IJ=(3P^\sharp)$. Then we have
\begin{align*}
&\gr^1(\OOO,\KKK)&&=&&(-1+P^\sharp),\hspace{20pt}
&&\gr^2(\OOO,\KKK)&&=&&(-1+2P^\sharp),
\\
& \gr^{3,0}(\OOO,\KKK)&&=&&(3P^\sharp),
&&\gr^{3,1}(\OOO,\KKK)&&=&&(-2+3P^\sharp),
\\
&\gr^{4,0}(\OOO,\KKK)&&=&&(0),
&&\gr^{4,1}(\OOO,\KKK)&&=&&(-1).
\end{align*}
We get a contradiction by Lemma \ref{Lemma-fiber}(i).
\end{proof}

\begin{sde}{\bf Subcase \fref{equation-lP=4-grOJ}{b}.}
\label{generality-H-conic-bundle}
We can apply Corollary \ref{y12y4-in-beta} with $b=1$
and get that the coefficients of $y_4^2$ and $y_1^2y_4$ 
in $\beta$ are independent. 
Since the image of $\beta$ in $\gr^1_C\OOO$
is not zero, we have $y_1^3y_2\in \beta$.
Now we apply Computation \ref{computation-lP=4} and obtain \xref{main-theorem-conic-bundle-1.1.4}.
Since the configuration is
not birationally contractible, $f$ is a $\QQ$-conic bundle.
The existence 
can be shown similarly to Proposition \ref{proposition-existence}.
\end{sde}
\end{de}

\section{Case $\ell(P)\ge 5$.}\label{section-lP=5}
\begin{de}
\label{ellP= 5-(7.10)-b}
In this section we consider the case  $\ell(P)\ge 5$ and $\h^0(\gr^1_C\OOO)\neq 0$.
By Lemma \ref{lemma-new-ell-p}, $P$ is the only singular point of $X$
and $\ell(P)=5$.

\begin{sde}
We use the notation of \ref{(7.5)} at $P$.
In particular,
\[
\alpha=y_1^5y_2+y_2^2+y_3^2+\delta y_4^{2k+1}+cy_1^2y_4^2+\cdots.
\]
Since $\deg\gr^1_C\OOO=1-i_P(1)=-1$, we have the following possibilities:
\begin{equation}
\label{equation-possibilities-lP=5-gr1COOO}
\gr^1_C\OOO=
\left\{
\begin{array}{lll@{\hbox{\hspace{70pt}}}l}
(P^\sharp)&\toplus&(-1+2P^\sharp),
&\flabel{equation-possibilities-lP=5-gr1COOO}{a}
\\
(2P^\sharp)&\toplus&(-1+P^\sharp).
&\flabel{equation-possibilities-lP=5-gr1COOO}{b}
\end{array} \right.
\end{equation}
\end{sde}
\end{de}
\begin{de}{\bf Case \fref{equation-possibilities-lP=5-gr1COOO}{b}.}
\label{case-lP=5-b}
Then $\gr^1_C\OOO=(2P^\sharp)\toplus(-1+P^\sharp)$ and
$y_4$ (resp. $y_3$) is an $\ell$-basis of $(2P^\sharp)$ (resp. $(-1+P^\sharp)$) at $P$.
Further, $(-1+P^\sharp)^{\totimes 2}= (-2+2P^\sharp)$ with an $\ell$-basis $y_3^2$.
Let $J$ be the ideal such that $I\supset J\supset I^{(2)}$ and
$J/I^{(2)}=(2P^\sharp)$. Then $J^\sharp=(y_2,y_3^2,y_4)$ and
$\alpha\equiv y_1^5y_2+y_3^2\mod I^\sharp J^\sharp$.
Thus $y_3^2$ is divisible by $y_1^5$ and so
$\gr^{2,1}(\OOO,J)= (-1+3P^\sharp)=(-2+2P^\sharp+5P^\sharp)$.
We also have $\gr^{2,0}(\OOO,J)= (2P^\sharp)$.
Thus the exact sequence \eqref{equation-exact-sequence}
is $\ell$-split.
Now, as in \ref{generality-H-conic-bundle}, we apply Computation \ref{computation-lP=4}
with $(a,b)=(0,1)$
and obtain \ref{main-theorem-conic-bundle-1.1.4}.
The existence can be shown similarly to Proposition \ref{proposition-existence}.
\end{de}

\begin{de}{\bf Case \fref{equation-possibilities-lP=5-gr1COOO}{a}.}\label{case-lP=5-a}
Then $\gr^1_C\OOO=(P^\sharp)\toplus(-1+2P^\sharp)$
and
$y_3$ (resp. $y_4$) is an $\ell$-basis of $(P^\sharp)$ (resp. $(-1+2P^\sharp)$) at $P$.
Let $J$ be the ideal such that $I\supset J\supset I^{(2)}$ and
$J/I^{(2)}=(P^\sharp)$. Then $J^\sharp=(y_2,\, y_3,\, y_4^2)$.

\begin{sde}{\bf Subcase $y_1^2y_4^2\notin \alpha$.}
Then $y_1^5y_2\in I^\sharp J^\sharp$.
We can write
\[
\alpha\equiv y_1^5y_2+y_1^6y_4^2\, \gamma(y_1^4) \mod I^\sharp J^\sharp.
\]
Hence, $y_2+y_1y_4^2\gamma(y_1^4) \in \Sat( I^\sharp J^\sharp)$.
In the exact sequence \eqref{equation-exact-sequence} we have
$\gr^{2,1}(\OOO,J)=(-1+2P^\sharp)^{\totimes 2}=(-1)$
with an $\ell$-basis $y_4^2$
and
$\gr^{2,0}(\OOO,J)=(P^\sharp)$ with an $\ell$-basis $y_3$.
Hence, $y_4^2$ and $y_3$ form an $\ell$-basis of $\gr^2(\OOO,J)$.
If $\gr^2(\OOO,J)$ contains an $\ell$-direct summand $(-1)$,
then we get a contradiction by Lemma \ref{Lemma-fiber}(ii).
Thus $\gr^2(\OOO,J)= (0)\toplus(-1+P^\sharp)$.
Then the exact sequence \eqref{equation-exact-sequence}
is not $\ell$-split. An $\ell$-basis of the first summand $(0)$
can be written as $y_4^2+ \lambda y_1y_3$ for some $\lambda\in \OOO_{C,P}$.
By the above, $\lambda(P)\neq 0$. Hence, $y_1y_3\in \beta$.
The terms $y_4^2$ and $y_2y_3$ appear in $\beta$ by Lemma \ref{lemma-extend-sections-D}.
Then we can apply Computation \ref{computation-lP=3a-new1} in which $(a,b)=(0,1)$,
$c=0$, and $\lambda\ne 0$.
Since $c=0$, the only case \ref{computation-lP=3a-new1} \type{c_{s,r})}
is possible.
But this contradicts \ref{sde1}.
\end{sde}

\begin{sde}{\bf Subcase $y_1^2y_4^2\in \alpha$.}
Then $y_1^5y_2+cy_1^2y_4^2\in I^\sharp J^\sharp$ with $c\neq 0$.
Hence,
\[
y_1^3y_2+cy_4^2\in \Sat(I^\sharp J^\sharp).
\]
In the exact sequence \eqref{equation-exact-sequence} we have
\[
\gr^{2,1}(\OOO,J)=(-1+3P^\sharp)
=(-1+2P^\sharp)^{\totimes 2}\totimes (3P^\sharp)
\supset (-1+2P^\sharp)^{\totimes 2}
\]
with an $\ell$-basis $y_2$.
So, the sequence \eqref{equation-exact-sequence}
is $\ell$-split with $\gr^{2,0}(\OOO,J)=(P^\sharp)$.
We can apply Corollary \ref{y12y4-in-beta} with 
$b=2$. 
Hence we can apply Computation \ref{computation-lP=3a-new1}
in which $(a,b)=(0,1)$, $c\neq 0$, and $\lambda$ is general.
Then we obtain \ref{main-theorem-1.1.3}.
The following example shows that this case does occur
(cf. Proposition \ref{proposition-existence}).
\end{sde}
\end{de}

\begin{sde}{\bf Example.}\label{example-1-1-3a}
As in \ref{example-1-1-3}, let $Z \subset {\mathbb C}^5_{z_1,\ldots,z_5}$ be
defined by
\begin{eqnarray*}
0&=& z_2^2+z_3+z_4z_5^k+z_1^2z_5,\qquad k\ge 1,\\
0&=& z_1^2z_2^2 + z_4^2-z_3z_5+z_1^3z_2.
\end{eqnarray*}
The origin
of the $z_3$-chart
is a type \type{(IIA)} point $P$ with $\ell(P)=5$:
\[
\{y_2^2+y_3^2+y_1^2(y_1^2y_2^2+y_4^2+y_1^3y_2) +y_4(y_1^2y_2^2+y_4^2+y_1^3y_2)^k=0\}/\muu_{4}(1,1,3,2),
\]
where $(C,P)$ is the $y_1$-axis.
Moreover, $X$ is smooth outside of $P$.
Thus $X\to Z$ is a divisorial contraction of type \ref{main-theorem-1.1.3}.
\end{sde}

\section{Appendix: resolution of certain surface singularities}

\begin{assumption}\label{notation-blowup}
Let
$W:= \CC^4_{y_1,\dots,y_4}/\muu_4(1,1,3,2)$,
let $C:=\{\text{$y_1$-axis}\}/\muu_4$, and let $\sigma$ be the weight $\frac14(1,1,3,2)$.
Consider a normal
surface singularity $H\ni 0$ given in $W$ by
two $\sigma$-semi-invariant equations $\alpha=\beta=0$.
We assume that the following conditions are satisfied.
\begin{itemize}
\item
$H$ contains $C$ and is smooth outside of $C$,
\item
$\wt \alpha\equiv 2\mod 4$,\quad
$\wt \beta\equiv 0\mod 4$,
\item
$\alpha\equiv y_1^{l}y_j\mod (y_2, y_3, y_4)^2$ for some $j\in \{2,3,4\}$ and $l>0$,
\item
$\alpha_{\sigma=2/4}=y_2^2$, \quad $y_3^2\in \alpha$,
\item
$y_4^2$ appears in $\beta$ with coefficient $1$,
\item
$y_2y_3$ appears in $\beta$ with coefficient $\nu$ which can be taken general,
\item
$H$ has only rational singularities
and, for any resolution, the total transform of $C$ has only simple normal crossings.
\end{itemize}

\begin{sde}\label{computations-notation}
We can write the equations in the following form
\begin{eqnarray*}
\alpha&=&y_1^ly_j+y_2^2+y_3^2+\delta y_4^{2k+1}+c y_1^2y_4^2+\epsilon y_1y_3y_4
+y_2\alpha'+\alpha'',
\\
\beta&= &y_4^2+\nu y_2y_3+\lambda y_1y_3+\mu y_1^3y_2+\eta y_1^2y_4+y_2\beta'+\beta'',
\end{eqnarray*}
where
$\delta, c, \epsilon, \mu, \eta, \nu, \lambda$ are constants,
$\alpha',\,\beta',\, \beta''\in (y_2,\, y_3,\, y_4)$,\
$\alpha''\in (y_2,\, y_3,\, y_4)^2$,\
$\sigmaord (\beta')= 3/4$,
$\sigmaord (\alpha')= 5/4$,
$\sigmaord (\beta'')> 1$,
$\sigmaord (\alpha'')> 3/2$, and $2k+1$ is the smallest exponent of $y_4$
appearing in $\alpha$.
We usually assume that all the summands in $\alpha$ (resp. $\beta$)
have no common terms. Then
$\beta'\in (y_1y_4,\, y_1y_2,\, y_2^2, y_2y_4)$.
\end{sde}
\end{assumption}

\begin{sconstruction}\label{notation-computations--sing}
Consider the weighted $\sigma$-blowup $\Phi: \tilde W\to W$. Let
$\tilde H$ be the proper transform of $H$ on $\tilde W$ and
$\Pi\subset \tilde W$ be the $\Phi$-exceptional divisor. Then $\Pi\simeq \PP(1,1,3,2)$ and
$\OOO_{\Pi}(\Pi)\simeq \OOO_{\PP}(-4)$.
Put
\begin{equation}\label{equation-Lambda-computation-2}
\begin{array}{l}
O:=(1:0:0:0),\ Q:=(0:0:1:0)\in \Pi,
\\[5pt]
\Lambda:=\{y_2=\alpha_{\sigma=6/4}=0\}\subset \Pi.
\end{array}
\end{equation}
Let $\tilde X\subset \tilde W$
(resp. $\tilde C\subset \tilde W$) be the proper transform of $X:=\{\alpha=0\}$
(resp. $C$).
Clearly, $\tilde C\cap \Pi=\{ O\}$.
Denote (scheme-theoretically)
\[
\Xi:=\tilde H\cap \Pi = \{y_2^2=\beta_{\sigma=1}=0\} \subset \Pi.
\]
\end{sconstruction}

\begin{sclaim}\label{claim-1-construction-blowup}
Any irreducible component $\Xi_i$ of $\Xi$ is a smooth rational curve
passing through $Q$.
Moreover, $\Xi=2\Xi_1$ \textup(resp. $\Xi=2\Xi_1+2\Xi_2$,\ $\Xi=4\Xi_1$\textup)
if and only if $\lambda\neq 0$ \textup(resp. $\lambda=0$ and $\eta\neq 0$,
$(\lambda, \eta)= (0,0)$\textup).
\end{sclaim}

\begin{sclaim}\label{claim-2-construction-blowup}
The point $Q\in \tilde H$ is Du Val of type \type{A_2}.
In particular, $\tilde H$ is normal.
\end{sclaim}
\begin{proof}
In the affine chart
$\CC^4/\muu_3(1,1,2,2)\simeq \{y_3\neq 0\}\subset \tilde W$
the surface $\tilde H$ is quasismooth at the origin.
More precisely, it is locally isomorphic to $\CC^2_{y_1,y_4}/\muu_3(1,2)$,
that is, $Q\in \tilde H$ is of type \type{A_2}.
Since $\tilde H\cap \Pi=\Xi$, $\tilde H$ is normal.
\end{proof}

\begin{sclaim}\label{claim-3-construction-blowup}
If $\Xi=2\Xi_1+2\Xi_2$, then the pair $(\tilde H, \Xi_1+\Xi_2)$
is not lc at $Q$ and lc outside of $Q$.
\end{sclaim}
\begin{proof}
Since $\Xi_1$ and $\Xi_2$ are tangent to each other, the pair $(\tilde H, \Xi_1+\Xi_2)$
is not lc at $Q$.
Since
\[
(K_{\tilde H}+\Xi_1+ \Xi_2)\cdot \Xi_i=-\frac 14\Xi\cdot \Xi_i=\frac 13,
\]
as in the proof of Lemma \ref{label-surfaces-C} we have
\[
\deg \Diff_{\Xi_1}(\Xi_2)=-\deg K_{\Xi_1}+\frac 13=\frac 73,
\qquad
\deg \Diff_{\Xi_2}(\Xi_1)=\frac 73,
\]
and coefficients of $\Diff_{\Xi_1}(\Xi_2)$ and $\Diff_{\Xi_2}(\Xi_1)$ at
$Q$ are $\ge 4/3$. Hence,
all other coefficients are $\le 1$ and so
the pair $(\tilde H, \Xi_1+ \Xi_2)$ is lc outside of
$Q$.
\end{proof}

\begin{sclaim}\label{claim-construction-blowup-Du-Val}
The singularities of $\tilde H$ are Du Val.
\end{sclaim}
\begin{proof}
By Claim \ref{claim-2-construction-blowup}
\ $\tilde H$ has a Du Val singularity at $Q$.
Outside of $Q$ the surface $\tilde H$ has only 
complete intersection rational singularities.
Hence, they are Du Val. 
\end{proof}

\begin{sclaim}\label{claim-4-construction-blowup}
$\Sing(\tilde X)\cap \Pi$ consists of the curve $\Lambda$, the point $Q$, and 
the point $(0:0:0:1)$ \textup(only if $k>1$\textup).
\end{sclaim}

\begin{sclaim}\label{claim-5-equation-notation-blowup}
$K_{\tilde H}=\Phi^*K_H-\frac 34 \Xi$.
\end{sclaim}
\begin{proof}
Follows by the adjunction and because $K_{\tilde W}=\Phi^*K_W+\frac 34 \Pi$.
\end{proof}

\begin{sclaim}\label{claim-computation-Xi}
Let $\varphi: \hat H\to \tilde H$ be
the minimal resolution
and let $\hat \Xi_i\subset \hat H$ be
the proper transform of $\Xi_i$.
\begin{itemize}
\item
If $\Xi=2\Xi_1$, then $\hat \Xi_1^2= -4$.
\item
If $\Xi=2\Xi_1+2\Xi_2$, then $\hat \Xi_i^2= -3$.
\end{itemize}
\end{sclaim}

\begin{proof}
By Claim \ref{claim-construction-blowup-Du-Val} the surface
$\tilde H$ has only Du Val singularities.
Hence, $K_{\hat H}=\varphi^* K_{\tilde H}$.
Using \ref{claim-5-equation-notation-blowup} we can write
\[
K_{\hat H}\cdot \hat \Xi_i=K_{\tilde H}\cdot \Xi_i=-\frac34 \Xi\cdot \Xi_i=
-\frac34 \Pi\cdot \Xi_i= -\frac34 \OOO_{\PP}(-4)\cdot \Xi_i=3\OOO_{\PP}(1)\cdot \Xi_i.
\]
If $\Xi=2\Xi_1$, then $\Xi_1$ is given by $y_2=\beta_{\sigma=1}=0$
and so $K_{\hat H}\cdot \hat \Xi_1=2$, i.e. $\hat \Xi_1$ is a $(-4)$-curve.
If $\Xi=2\Xi_1+2\Xi_2$, then similarly $K_{\hat H}\cdot \hat \Xi_i=1$,
i.e. $\hat \Xi_i$ is a $(-3)$-curve.
\end{proof}

\begin{computation}\label{computation-lP=3a-new1}
In the notation of \xref{notation-blowup}, let $H\ni 0$ be a normal singularity
with
\begin{eqnarray*}
\alpha&=&ay_1^3y_3+by_1^5y_2+y_2^2+y_3^2+\delta y_4^3+c y_1^2y_4^2+\epsilon y_1y_3y_4
+y_2\alpha'+\alpha'',
\\
\beta&= &y_4^2+\nu y_2y_3+\lambda y_1y_3+\mu y_1^3y_2+y_2\beta'+\beta'',
\end{eqnarray*}
where
$(a,b)= (1,0)$ or $(0,1)$, $(b,\mu)\neq (0,0)$,
$(c\lambda, c\mu)\neq (a,b)$, and $\lambda\neq 0$.
We assume that the hypothesis of \xref{computations-notation}
are satisfied.
Then the graph $\Delta(H,C)$ has
one of the following forms:

\begin{equation*}
\xy
\xymatrix @R=8pt@C=13pt{
\mathrm{a})&&\circ\ar@{-}[d]&\circ\ar@{-}[r]\ar@{-}[d]&\circ&\circ\ar@/^7pt/@{-}[dll]
\\
\bullet\ar@{-}[r]&\circ\ar@{-}[r]&\circ\ar@{-}[r]
&\underset4\circ\ar@{-}[rr]&&\circ
}
\endxy
\hspace{17pt}
\xy
\xymatrix @R=7pt@C=13pt{
\mathrm{b_r)}&&\circ\ar@{-}[d]&\circ\ar@{-}[r]\ar@{-}[d]&\circ&\circ\ar@{-}[d]
\\
\bullet\ar@{-}[r]&\circ\ar@{-}[r]&\circ\ar@{-}[r]
&\underset4\circ\ar@{-}[r]&\cdots\POS*\frm{_\}},-U*---!D\txt{$\scriptstyle{r\ge 0}$}
&\circ\ar@{-}[r]\ar@{-}[l]&\circ
}
\endxy
\end{equation*}
\[
\vcenter{
\xy
\xymatrix"M"@R=7pt@C=17pt{
\mathrm{c_{s,r})}&\circ\ar@{-}[r]&\cdots&\ar@{-}[l]\circ&&&\circ\ar@{-}[r]&\circ
\\
&&&&\circ\ar@{-}[lu]\ar@{-}[ld]&\overset 4\circ\ar@{-}[ru]\ar@{-}[l]\ar@{-}[rd]
\\
\bullet\ar@{-}[r]&\circ\ar@{-}[r]&\cdots&\ar@{-}[l]\circ&&&\circ\ar@{-}[r]&\cdots&\circ\ar@{-}[l]
}
\POS"M3,7"."M3,9"!C*\frm{_\}},-U*---!D\txt{$\scriptstyle{r\ge 0}$}
\POS"M3,2"."M3,4"!C*\frm{_\}},-U*---!D\txt{$\scriptstyle{s\ge 2}$}
\POS"M1,2"."M1,4"!C*\frm{^\}},-U*++++++!D\txt{$\scriptstyle{s\ge 2}$}
\endxy
}
\vspace{16pt}
\]

\noindent
Moreover, \textup{a)} occurs if and only if
$c\lambda\neq a$ and
$\lambda(\lambda \delta-\epsilon)^2\neq 4 (c\lambda-a)$, and
\type{b_r)} occurs if and only if
$c \lambda\neq a$ and
$\lambda(\lambda \delta-\epsilon)^2= 4 (c\lambda-a)$.
The case \type{c_{s,r})} occurs only with $(s,r)=(2,1)$ or $(3,0)$.
\end{computation}

\begin{proof}
We use the notation of \xref{notation-blowup}.
By \ref{claim-1-construction-blowup} we have
$\Xi=2\Xi_1$, where $\Xi_1:=\{y_2=y_4^2+\lambda y_1y_3=0\}$.
By Claim \ref{claim-construction-blowup-Du-Val} $\Delta(H,C)$ contains
at most one vertex of weight $\ge 3$. This vertex corresponds to $\Xi_1$
and its weight equals $3$ by Claim \ref{claim-computation-Xi}.

\begin{sclaim}\label{sclaim-11-2-O}
The point $O\in \tilde H$ is of type \type{A_n}, where $n\ge 3$ and
$n=3$ if and only if $c \lambda\neq a$.
Furthermore, the proper transform $\tilde C$ of $C$ meets $\Xi_1$ at $O$,
the pair $(\tilde H,\tilde C)$ is plt and the pair $(\tilde H,\Xi_1)$ is not plt
at $O$.
\end{sclaim}

\begin{proof}
Recall that by Claim \ref{claim-construction-blowup-Du-Val} the
singularities of $\tilde H$ are Du Val.
In the affine chart $\CC^4\simeq U_1=\{y_1\neq 0\}\subset \tilde W$
the equations of $\tilde H$ have the following form
\begin{eqnarray*}
&&ay_1y_3+by_1y_2+y_2^2+c y_1y_4^2+y_1\alpha_\bullet=0,
\\
&&y_4^2+\lambda y_3+\mu y_2+\beta_\bullet=0,
\end{eqnarray*}
where $\alpha_\bullet$, $\beta_\bullet\in (y_1,y_2,y_3,y_4)^2$
and $y_4^2\notin \alpha_\bullet$, $\beta_\bullet$.
Now consider the usual blowup $\bar H\to \tilde H$ of the origin
$O\in \tilde H\cap U_1$. Let $\Pi_1$ be the exceptional divisor of the
ambient space. Then $\Pi_1\simeq \PP^3$.
The exceptional curve $\Theta:=\bar H\cap \Pi_1$ of our surface is given in $\Pi_1$ by
\[
\lambda y_3+\mu y_2=y_2\bigl((b\lambda-a\mu)y_1+y_2\bigr)=0.
\]
Since $(b,\mu)\neq (0,0)$, we have $b\lambda\neq a\mu$ and so
$\Theta=\Theta_1+\Theta_2$,
where $\Theta_1:=\{y_2=y_3=0\}$ and $\Theta_2=\{(b\lambda-a\mu)y_1+y_2=\lambda y_3+\mu y_2=0\}$
with $\Theta_1\cap \Theta_2=\{Q_1\}$, $Q_1:=(0:0:0:1)$.
Hence $\bar H$ is smooth outside of $Q_1$.
Moreover,
the proper transform of $\Xi_1$ passes through $Q_1$, and
the proper transform of $C$ meets $\Theta_1$ at $O_1:=(1:0:0:0)$.
{}From the classification of Du Val singularities we immediately see
that $O\in \tilde H$ is of type \type{A_n} with $n\ge 2$.
In the chart $y_4\neq 0$
the surface $\bar H$ is given by the following equations:
\[
\begin{array}{rcl}
ay_1y_3+by_1y_2+y_2^2+c y_1y_4+(\text{terms of degree $\ge 3$})&=&0,
\\[5pt]
y_4+\lambda y_3+\mu y_2+(\text{terms of degree $\ge 2$})&=&0.
\end{array}
\]
Eliminating $y_4$ we get
\[
(a-c \lambda)y_1y_3+y_2^2+ (b- c\mu)y_1y_2 +(\text{terms of degree $\ge 3$}).
\]
{}From this, one can see that $\bar H\ni Q_1$ is singular of type \type{A}. Moreover,
it is of type \type{A_1}
if and only if $a\neq c \lambda$.
This proves our claim.
\end{proof}

\begin{sclaim}\label{sclaim-11-2-Lambda}
The set $\Sing(\tilde H) \setminus \{ Q\}$
coincides with $\Lambda\cap \Xi_1$ and we have the following possibilities:
\[
\# \Sing(\tilde H)=
\begin{cases}
2&\text{if $c\lambda=a$ and $\lambda \delta=\epsilon$,}
\\
3 & \text{if $c\lambda=a$ and $\lambda \delta\neq\epsilon$ or}
\\
&\text{$c\lambda\neq a$ and $\lambda(\lambda \delta-\epsilon)^2=4 (c\lambda-a)$,}
\\
4& \text{if $c\lambda\neq a$ and
$\lambda(\lambda \delta-\epsilon)^2\neq 4 (c\lambda-a)$.}
\end{cases}
\]
\end{sclaim}

\begin{proof}
Clearly, $\Sing(\tilde H) \setminus \{ Q\} \supset \Lambda\cap \Xi_1$.
By writing down explicit equations one can see that these two sets coincide
and are given in $\Pi$ by
\[
y_2= y_4^2+\lambda y_1y_3=ay_1^3y_3+y_3^2+\delta y_4^3+c y_1^2y_4^2+\epsilon y_1y_3y_4=0.
\]
Since $\lambda\neq 0$, in the affine chart $y_1\neq 0$ this can be rewritten as follows:
\begin{equation}
\label{equation-lP=3-intersection=Xi=Lambda}
y_2=y_4^2+\lambda y_3=0, \ \
y_4^2\bigl(y_4^2+\lambda(\lambda \delta-\epsilon) y_4+\lambda(c\lambda-a)\bigr)=0.
\end{equation}
Then we get the possibilities as claimed.
\end{proof}

\begin{sclaim}
If $\# \Sing(\tilde H)=4$, then $\Delta(H,C)$ is of type \textup{a)}.
If $\# \Sing(\tilde H)\le 3$ and either
$(\tilde H,\Xi_1)$ is not plt outside of $O$
or
lc at $O$, then $\Delta(H,C)$ is of type \type{b_r)}.
\end{sclaim}

\begin{proof}
As in the proof of Lemma \ref{label-surfaces-C} we have
\begin{equation}
\label{equation-Diff-Xi-computation-2}
\deg \Diff_{\Xi_1}(0)=-\deg K_{\Xi_1}+(K_{\tilde H}+\Xi_1)\cdot \Xi_1=\textstyle\frac 83,
\end{equation}
the coefficient of $\Diff_{\Xi_1}(0)$ at $Q$ equals $2/3$, and
the coefficient of $\Diff_{\Xi_1}(0)$ at $O$ is $\ge1$.
If $\# \Sing(\tilde H)=4$, then the only possibility is
$\Diff_{\Xi_1}(0)=O+ \frac23 Q+ \frac12P_1+ \frac12 P_2$.
Hence, $(\tilde H,\Xi_1)$ is plt and $\Delta(H,C)$ is of type \textup{a)}.
Similarly, in the second case the only possibility is $\Diff_{\Xi_1}(0)=O+ \frac23 Q+ P_1$
and so $\Delta(H,C)$ is of type \type{b_r)} (cf. \cite[ch. 3]{Utah}).
\end{proof}

{}From now on we assume that $\# \Sing(\tilde H)\le 3$,
$(\tilde H,\Xi_1)$ is plt outside of $O$,
and $(\tilde H,\Xi_1)$ is not lc at $O$.
Thus $\tilde H$ has at most one singularity $P_1$ outside of $\{O,\, Q\}$
and this singularity is of type \type{A_r} for some $r\ge 0$.
By the above $\tilde H\ni O$ is a point of type \type {A_n}
with $n>3$ (because $(\tilde H,\Xi_1)$ is not lc at $O$).
Thus the graph $\Delta(H,C)$ looks as follows:
\[
\xy
\xymatrix@R=7pt@C=10pt{
&\Delta_Q\ar@{-}[d] &
\\
\Delta_{O} \ar@{-}[r] &\hat \Xi_1 \ar@{-}[r]& \Delta_{P_1}
}
\endxy
\]
where $\Delta_{P_1}$, $\Delta_Q$, $\Delta_O$ are chains corresponding to resolution of
points $P_1$, $Q$, $O$, respectively.
Moreover, $\Delta_{P_1}$ and $\Delta_Q$
adjacent to $\hat \Xi_1$ by end vertices.
Note that $2\Xi_1=\Xi$ is a Cartier divisor on $\tilde H\setminus\{Q\}$
and $\Xi_1$ is smooth.
By the classification of Du Val singularities this implies that
$n$ is odd and
the proper transform of $\Xi_1$ on the minimal resolution of $O\in \tilde H$ meets
the middle curve in the chain of $\Delta_O$.
This gives \type{c_{s,r})}. Moreover, in this case
\[\textstyle
\Diff_{\Xi_1}(0)=\frac 23 Q+ \frac {s+1}2 O +\frac {r}{r+1} P_1, \quad \frac 83=\frac 23 + \frac {s+1}2 +\frac {r}{r+1}
\]
and so $r+s=3$. This completes the proof of \ref{computation-lP=3a-new1}.
\end{proof}

\begin{sremark}
Let $\alpha$ and $\beta$ be as in \ref{computation-lP=3a-new1}
with $(a,b)=(1,0)$ or $(0,1)$. Assume that the coefficients $\delta$, $c$, \dots
are sufficiently general. Then $\Delta(H,C)$ is of type \type{a)}.
Indeed, by Bertini's theorem $\tilde H$ is smooth outside of
$O$, $Q$ and $\Xi_1\cap \tilde H$. The singularity of $\tilde H$ at $O$ is of type
\type{A_3} by \ref{sclaim-11-2-O}, at $Q$ is of type
\type{A_2} by \ref{claim-2-construction-blowup}.
The intersection $\Xi_1\cap \tilde H$ consists of two points of type
\type{A_1} by \ref{sclaim-11-2-Lambda}.
\end{sremark}

\begin{computation}\label{computation-lP=4}
In the notation of \xref{notation-blowup}, let $H$ be a normal singularity with
\begin{eqnarray*}
\alpha&=&ay_1^4y_4+by_1^5y_2+y_2^2+y_3^2+\delta y_4^3+c y_1^2y_4^2+\epsilon y_1y_3y_4+
y_2\alpha'+\alpha'',
\\
\beta&= &y_4^2+\nu y_2y_3+\eta y_1^2y_4+\mu y_1^3y_2+
y_2\beta'+\beta'',
\end{eqnarray*}
where $\eta\neq 0$,
$(a,b)=(1,0)$ or $(0,1)$, and
$(b,\mu)\neq (0,0)$.
Furthermore, assume that $\eta$ is general with respect to 
the coefficients of $\alpha$.
We assume that the hypothesis of \xref{computations-notation}
are satisfied.
Then the graph $\Delta(H,C)$ has
the following form:

\begin{equation*}
\xy
\xymatrix@R=7pt@C=8pt{
&\bullet\ar@{-}[r]&\circ\ar@{-}[d]&&\circ\ar@{-}[d]&\circ\ar@{-}[d]
\\
&\circ\ar@{-}[r]&\circ\ar@{-}[r]
&\underset3\circ\ar@{-}[r]&\circ\ar@{-}[r]&\underset3\circ \ar@{-}[r]&\circ
}
\endxy
\end{equation*}
\end{computation}

\begin{proof}
We use the notation \xref{notation-blowup}.
In our case
$\Xi=2\Xi_1+2\Xi_2$, where
\[
\Xi_1=\{y_2=y_4=0\},\qquad
\Xi_2=\{y_2=y_4+\eta y_1^2=0\},
\]
$O\in \Xi_1$, $O\notin \Xi_2$,
and $\Xi_1\cap \Xi_2=\{ Q\}$.
Recall that by Claim \ref{claim-construction-blowup-Du-Val}
$\tilde H$ has only Du Val singularities.

\begin{sclaim}
$\tilde H$ has an \type{A_3}-point at $O\in \Xi_1$.
Furthermore, the proper transform $\tilde C$ of $C$ meets $\Xi_1\cup \Xi_2$ transversely at $O$,
the pair $(\tilde H,\tilde C)$ is plt,
the pair $(\tilde H,\Xi_1)$ is not plt at $O$,
and $\tilde H$ is smooth on $\Xi_1\setminus \{O,\, Q\}$.
\end{sclaim}

\begin{proof}
In the affine chart $\CC^4\simeq U_1=\{y_1\neq 0\}\subset \tilde W$
the surface $\tilde H$ is given by
\begin{eqnarray*}
ay_1y_4+by_1y_2+y_2^2+y_1\alpha_\bullet&=&\eta y_4+\mu y_2+\beta_\bullet=0,
\end{eqnarray*}
where $\alpha_\bullet, \, \beta_\bullet\in (y_1, y_2, y_3, y_4)^2$.
Consider the usual blowup $\bar H\to \tilde H$ of the origin
$O\in \tilde H\cap U_1$. Let $\Pi_1\simeq \PP^3$ be the exceptional divisor of the
ambient space.
The exceptional curve $\Theta:=\bar H\cap \Pi_1$ is given in $\Pi_1$ by
\[
(b\eta -a\mu )y_1y_2+y_2^2=\eta y_4+\mu y_2=0.
\]
Since $(b,\mu)\neq (0,0)$, we see that $\Theta=\Theta_1+\Theta_2$
is reduced and has two irreducible components meeting at one point
$Q_1:=(0:0:1:0)$.
{}From the classification of Du Val singularities we immediately see
that $O\in \tilde H$ is of type \type{A}.
Moreover,
the proper transform of $\Xi_1$ passes through $Q_1$ and
the proper transform of $C$ meets only one of the components of $\Theta$.
Hence the pair $(\tilde H,\Xi_1)$ is not plt at $O$
and the pair $(\tilde H,\tilde C)$ is plt.
Since the pair $(\tilde H,\Xi_1)$ is lc at $O$, $(\tilde H, O)$ is an \type{A_3}-point.
Finally by Lemma \ref{label-surfaces-C} $\tilde H$ has no singular
points on $\Xi_1\setminus \{O,\, Q\}$.
\end{proof}
The above claim gives a complete description of the left hand side of $\Delta(H,C)$
(adjacent to the left $\underset 3\circ$).
Since $\Xi_1\cap \Xi_2=\{Q\}$, the the middle part of the graph
has the desired form.

\begin{sclaim}
$\Sing(\tilde H)\cap \Xi_1 =\{O,\, Q\}$ and
$\Sing(\tilde H)\cap \Xi_2 = \{Q\}\cup (\Lambda\cap \Xi_2)$, where for
$\Lambda\cap \Xi_2=\{P_1,\, P_2\}$, $P_1\neq P_2$
and both points $(\tilde H,P_i)$ are of type \type{A_1}.
\end{sclaim}

\begin{proof}
Clearly, $\Sing(\tilde H) \supset \Lambda\cap \tilde H$ and
$\Sing(\tilde H) \ni Q$.
{}From \eqref{equation-Lambda-computation-2}
we obtain that $\Lambda\cap \Xi_1 =\{O,\, Q\}$ and $\Lambda\cap \Xi_2$
is given by
\begin{equation}\label{equation-Lambda-cap-H-2}
y_2=y_3^2-\epsilon \eta y_1^3 y_3
+(c \eta-a -\delta\eta^2 )\eta y_1^6 =
y_4+\eta y_1^2=0.
\end{equation}
Since $\eta$ is general, one can see that $\Lambda\cap \Xi_2$
consists of two points.
\end{proof}
Now we are ready to complete the proof of \ref{computation-lP=4}.
Since $K_{\tilde H} \cdot \Xi_i=1$
and $\Xi_1\cdot \Xi_2=2/3$ we have $\Xi_i^2=-4/3$ and
\[\textstyle
\deg \Diff_{\Xi_i}(0)= (K_{\tilde H}+\Xi_i)\cdot \Xi_i -\deg K_{\Xi_i} =\frac 53.
\]
Therefore, by the last claim we have
\[\textstyle
\Diff_{\Xi_2}(0)= \frac 23 Q+ \frac 12 P_1+\frac 12 P_2.
\]
Thus $(\tilde H,\Xi_2)$ is plt.
This completes the proof of \ref{computation-lP=4}.
\end{proof}

\par\medskip\noindent
{\bf Acknowledgments.}
The paper was written during the second author's visits to RIMS, Kyoto University.
The author is very grateful to the institute for
invitations, support, and hospitality.

\def\mathbb#1{\mathbf#1} \def\bblapr{April}

\end{document}